\theoremstyle{plain}
\newtheorem{thm}{Theorem}[section]
\newtheorem{cor}[thm]{Corollary}
\newtheorem{lem}[thm]{Lemma}
\newtheorem{pro}[thm]{Proposition}
\theoremstyle{definition}
\theoremstyle{remark}
\newtheorem{remark}[thm]{Remark}
\def\dim{\text{\rm dim}}
\author{Tsiu-Kwen Lee$^\flat$ and Jheng-Huei Lin$^\natural$}
\title{Commutators and products of Lie ideals of prime rings}
\date{}
\begin{document}

\maketitle

\centerline {Department of Mathematics, National Taiwan
University${^\flat}$}

\centerline {Taipei, Taiwan}

\centerline {tklee@math.ntu.edu.tw$^\flat$}

\centerline {and}

\centerline {Department of Mathematics, National Changhua
University of Education${^\natural}$}

\centerline {Changhua, Taiwan}

\centerline {r01221012@ntu.edu.tw${^\natural}$}

\begin{abstract}\vskip6pt
\noindent Motivated by some recent results on Lie ideals, it is proved that if $L$ is a Lie ideal of a simple ring $R$ with center $Z(R)$, then $L\subseteq Z(R)$, $L=Z(R)a+Z(R)$ for some noncentral $a\in L$, or $[R, R]\subseteq L$, which gives a generalization of a classical theorem due to Herstein. We also study commutators and products of noncentral Lie ideals of prime rings.
Precisely, let $R$ be a prime ring with extended centroid $C$. We completely characterize Lie ideals $L$ and elements $a$ of $R$ such that $L+aL$ contains a nonzero ideal of $R$. Given noncentral Lie ideals $K, L$ of $R$, it is proved that $[K, L]=0$ if and only if $KC=LC=Ca+C$ for any noncentral element $a\in L$. As a consequence, we characterize noncentral Lie ideals $K_1,\ldots,K_m$ with $m\geq 2$ such that
 $K_1K_2\cdots K_m$ contains a nonzero ideal of $R$.
Finally, we characterize noncentral Lie ideals $K_j$'s and $L_k$'s satisfying $\big[K_1K_2\cdots K_m, L_1L_2\cdots L_n\big]=0$ from the viewpoint of centralizers.
 \end{abstract}

{ \hfill\break \noindent 2020 {\it Mathematics Subject Classification.}\ 16N60, 16W10. \vskip2pt

% 16R50 Other kinds of identities (generalized polynomial, rational, involution)
% 16R60 Functional identities (associative rings and algebras)
% 16K40 Infinite-dimensional and general division rings
% 16S99 None of the above, but in this section
%    16S Associative rings and algebras arising under various constructions
% 16U60 Units, groups of units (associative rings and algebras)

\noindent {\it Key words and phrases:}\ (Exceptional) prime ring, simple ring, semiprime ring, (abelian) Lie ideal, 
%product, 
commutator, centralizer.\ \vskip2pt

%\noindent Corresponding author:\ Jheng-Huei Lin \vskip6pt

\section{Introduction}
The concept of Lie algebras, or infinitesimal groups, originated out of the study to  infinitesimal transformations by Lie in the 1870s.
The name ``Lie algebra" was introduced by Weyl in 1934.
See \cite[Section 49.5]{kline1990} and other historical documents.
Since then, the topic has been studied by a lot of researchers and applied to many areas of science and mathematics such as theoretical physics, geometry and operator theory.
One of the most important substructures is its ideals, i.e., Lie ideals.
In 1950, Jacobson and Rickart studied Jordan homomorphisms through the viewpoint of Lie ideals (see \cite[Sections 6 and 7]{jacobson1950}).
In order to apply their theorems to primitive rings with nonzero socle, they began to investigate the ideal structure of Lie rings and discovered important properties of noncentral Lie ideals on general matrix rings.
See \cite[Theorems 19 and 20]{jacobson1950}.
On the other hand, Herstein also began studying the Jordan ring and the Lie ring of an associative ring, including the relationship among their ideal structures (see \cite{herstein1954,herstein1955a,herstein1955b,herstein1961,herstein1969,herstein1970}).
These developed several branches of research concerning Lie and Jordan algebras, which have been used in a lot of areas of mathematics, especially the study of operator algebras (see, for example, \cite{bresar2008,bresar2020,gardella2024a,gardella2024,hopenwasser2004,marcoux2010,robert2016}), a subject having direct applications to differential geometry, representation theory, quantum mechanics, etc.
See \cite{kline1990} or related materials.
However, almost all of classical theorems require rings to be $2$-torsion free, limiting the scope of usage.
Accordingly, it is worthwhile to resume these studies with more general cases by adopting modern mathematical tools, making the range of application wider.

Throughout the paper, $R$ is an associative ring, not necessarily with unity, with center $Z(R)$. For $a, b\in R$, we let $[a, b]:=ab-ba$, the additive commutator of $a$ and $b$. Given two subsets $A, B$ of $R$, let $[A, B]$ (resp. $AB$) denote the additive subgroup of $R$ generated by all elements $[a, b]$ (resp. $ab$) for $a\in A$ and $b\in B$. We write $\overline A$ to stand for the subring of $R$ generated by $A$. An additive subgroup $L$ of $R$ is called a {\it Lie ideal} if $[L, R]\subseteq L$. A  Lie ideal $L$ is called {\it abelian} (resp. {\it central}) if $[L, L]=0$  (resp. $L\subseteq Z(R)$). Clearly, if $K$ and $L$ are Lie ideals of $R$, then so are $[K, L]$ and $KL$.

A ring $R$ is called {\it simple} if $R^2\ne 0$ and the only ideals of $R$ are $\{0\}$ and itself.  A famous theorem due to Herstein characterizes Lie ideals of simple rings as follows (see \cite[Theorem 1.5]{herstein1969}).

\begin{thm} (Herstein)\label{thm5}
Let $R$ be a simple ring. Then, given a Lie ideal $L$ of $R$, either $[R, R]\subseteq L$ or $L\subseteq Z(R)$ unless $\text{\rm char}\,R=2$ and $\dim_{Z(R)}R=4$.
 \end{thm}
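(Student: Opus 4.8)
\noindent The plan is to fix a Lie ideal $L$ with $L\not\subseteq Z:=Z(R)$ (if $L\subseteq Z$ there is nothing to prove) and to show that $[R,R]\subseteq L$ unless $\operatorname{char}R=2$ and $\dim_ZR=4$. We may assume $1\in R$, so that $Z$ is a field. The argument runs by the dichotomy ``$L$ abelian'' versus ``$L$ non-abelian'', and I would establish: \textbf{(i)} if $[L,L]=0$ then $\operatorname{char}R=2$ and $\dim_ZR=4$ (and in fact $L=Za+Z$ for any noncentral $a\in L$); \textbf{(ii)} if $[L,L]\ne0$ then $[R,R]\subseteq L$, with no exception. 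Together these give the theorem: outside the exceptional case (i) forces $L$ to be non-abelian, and then (ii) finishes.

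For (i), pick $a\in L\setminus Z$. Since $a$ and $[a,r]$ both lie in the abelian group $L$ they commute, so $\delta:=\operatorname{ad}_a$ satisfies $\delta^2=0$. From $0=\delta^2(xy)=\delta^2(x)y+2\delta(x)\delta(y)+x\delta^2(y)$ we get $2\delta(x)\delta(y)=0$; if $\operatorname{char}R\ne2$ then $\delta(x)\delta(y)=0$, hence (replacing $y$ by $yz$) $\delta(x)\,y\,\delta(z)=0$, and primeness forces $\delta=0$, i.e. $a\in Z$ --- a contradiction. So $\operatorname{char}R=2$, in which case $\delta^2=0$ already yields $a^2\in Z$. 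In addition $[a,x]$ and $[a,y]$ lie in $L$, hence commute: $[\,[a,x],[a,y]\,]=0$ for all $x,y$. This is a nontrivial generalized polynomial identity for $R$ with fixed noncentral coefficient $a$, so by Martindale's theorem, together with simplicity, $R$ is finite-dimensional over $Z$, say $R\cong M_n(D)$ with $D$ a division ring finite-dimensional over $Z$. Now $a$, being a matrix with $a^2$ scalar, is --- after subtracting a scalar, and (if $a^2$ is not a square in $Z$) a quadratic base change --- conjugate to a nilpotent matrix of square zero; the commutativity of $\{[a,x]:x\in R\}$ then forces that nilpotent to have rank $1$ and $n\le2$, whence $\dim_ZR=4$. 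A direct inspection of the Lie ideals of such a ring identifies $L$ as $Za+Z$.

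For (ii), fix $u\in L\setminus Z$. When $\operatorname{char}R\ne2$ I would argue classically: $[u,r^2]=[u,r]r+r[u,r]\in L$ and $[[u,r],r]=[u,r]r-r[u,r]\in L$ give $[u,r]r\in L$ and $r[u,r]\in L$; linearising these, using the Jacobi identity (so that $[L,[R,R]]\subseteq L$) and the hypothesis $[L,L]\ne0$ together with primeness, one produces a nonzero ideal $M$ of $R$ with $[M,R]\subseteq L$; simplicity gives $M=R$, hence $[R,R]\subseteq L$. When $\operatorname{char}R=2$ the halving steps fail, and here I would invoke the characteristic-$2$ Lie-structure theory of Lanski--Montgomery and Bergen--Herstein--Kerr: a noncentral Lie ideal of a prime ring of characteristic $2$ contains $[M,R]$ for some nonzero ideal $M$ unless $\dim_CRC\le4$ ($C$ the extended centroid). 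For simple $R$ this yields $[R,R]\subseteq L$ except when $\dim_ZR=4$; and in a four-dimensional characteristic-$2$ algebra over $Z$ the Lie ideals are precisely $0$, $Z$, the planes $Za+Z$ with $a$ noncentral, $[R,R]$, and $R$, so every \emph{non-abelian} one already contains $[R,R]$. This proves (ii).

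The heart of the matter, and the sole source of the exception, is characteristic $2$: in (i) the deduction of $\dim_ZR=4$ from $\delta^2=0$ together with $[\,[a,x],[a,y]\,]=0$, and in (ii) the production of an ideal inside $L$ when one cannot divide by $2$. I would handle the former through generalized polynomial identities as sketched, and the latter by reducing --- via the classical lemmas --- to the single four-dimensional characteristic-$2$ algebra and reading off its Lie ideals; that last computation simultaneously yields the sharper trichotomy $L\subseteq Z$, $L=Za+Z$, or $[R,R]\subseteq L$ exploited elsewhere in the paper.
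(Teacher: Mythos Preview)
The paper does not prove Theorem~\ref{thm5}. It is quoted as a classical result of Herstein with a reference to \cite[Theorem~1.5]{herstein1969} and then \emph{used} as a black box in Case~1 of the proof of Theorem~\ref{thm20}; so there is no proof in the paper against which to compare your proposal.

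That said, your abelian/non-abelian dichotomy is exactly the one the paper exploits in its refinement (Theorem~\ref{thm20}), and your part~(i) is essentially the content of Lemma~\ref{lem131} --- though the paper deduces ``abelian noncentral $L$ $\Rightarrow$ $R$ exceptional'' by quoting Lanski--Montgomery (Lemma~\ref{lem5}) rather than via your $\operatorname{ad}_a^2=0$ and GPI argument. Two remarks on your sketch itself. In~(i), the passage from $[\,[a,x],[a,y]\,]=0$ to $\dim_ZR=4$ is correct in spirit, but the step ``conjugate to a nilpotent of square zero after a quadratic base change'' needs care: over a non-split division component $D$ the conjugacy claim can fail, and the clean way is to extend scalars to the algebraic closure of $Z$ (legitimate once Martindale gives $\dim_ZR<\infty$) and run the rank argument in $M_n(\overline Z)$. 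In~(ii), the characteristic-$2$ case need not lean on external structure theory at all: the paper's own Theorem~\ref{thm18} yields a characteristic-free finish, since $[L,L]\ne0$ and simplicity give $R=\widetilde R[L,L]\widetilde R\subseteq L+L^2$, whence $[R,R]\subseteq[L+L^2,R]\subseteq[\overline L,R]=[L,R]\subseteq L$ by Lemma~\ref{lem8}(i). This bypasses the division by~$2$ and the appeal to Lanski--Montgomery/Bergen--Herstein--Kerr entirely.
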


 A ring $R$ is called {\it prime} if, given $a, b\in R$,
 $aRb=0$ implies that either $a=0$ or $b=0$. Given a  prime ring $R$, let $Q_s(R)$ be the Martindale symmetric ring of quotients of $R$. It is known that $Q_s(R)$ is also a prime ring, and its center, denoted by $C$, is a field, which is called the {\it extended centroid} of $R$. The extended centroid $C$ will play an important role in our present study. See \cite{beidar1996} for details.\vskip6pt

\noindent{\bf Definition.}\ A prime ring $R$ is called {\it exceptional} if both $\text{\rm char}\,R= 2$ and $\dim_CRC=4$.
\vskip6pt

Lanski and Montgomery characterized  Lie ideals $K, L$ of a prime ring $R$ satisfying $[K, L]\subseteq Z(R)$ (see \cite[Lemma 7]{lanski1972}).

\begin{lem} (Lanski-Montgomery 1972)
\label{lem5}
Let $R$ be a prime ring with Lie ideals $K, L$. If $[K, L]\subseteq Z(R)$, then one of $K$  and $L$ is central except when $R$ is exceptional.
\end{lem}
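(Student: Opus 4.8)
\noindent\emph{A proof plan.} The plan is to convert the hypothesis into a generalized polynomial identity, using the structure theory of Lie ideals of prime rings. Suppose $[K,L]\subseteq Z(R)$, that $R$ is not exceptional, and — for contradiction — that neither $K$ nor $L$ is central; then $R$ is noncommutative, since a commutative prime ring has $Z(R)=R$ and hence no noncentral Lie ideals. The first move is to replace the possibly ``thin'' Lie ideals $K$ and $L$ by something wide enough to carry an identity. By the structure theorem for Lie ideals (Herstein for simple rings, and Lanski in general; see \cite{herstein1969,lanski1972}), a \emph{non-exceptional} prime ring $R$ has the property that every noncentral Lie ideal $U$ contains $[I,R]$ for some nonzero ideal $I$ of $R$. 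Applying this to $K$ and to $L$ produces nonzero ideals $I,J$ of $R$ with $[I,R]\subseteq K$ and $[J,R]\subseteq L$.

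Now I read off an identity. For all $x\in I$, $y\in J$ and $r,s\in R$, the element $[[x,r],[y,s]]$ lies in $[K,L]\subseteq Z(R)\subseteq C$, hence is centralized by every element of $R$; that is, $R$ satisfies
\[
\big[\,[[x,r],[y,s]],\,t\,\big]=0\qquad\text{for all }x\in I,\ y\in J,\ r,s,t\in R.
\]
Since $I$ and $J$ are nonzero ideals of the prime ring $R$, this polynomial identity in fact holds with $x,y,r,s,t$ ranging over all of $Q_s(R)$, and it is not the zero polynomial since, for example, the monomial $xryst$ occurs in it with coefficient $1$. Hence $R$ is a prime PI ring, so $Q_s(R)=RC$ is a simple algebra of finite dimension over its center $C$, and $\overline{C}\otimes_C RC\cong\M_k(\overline{C})$ for some $k\geq1$, where $\overline{C}$ is an algebraic closure of $C$.

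It remains to rule out every $k$. The algebra $\M_k(\overline{C})$ inherits the identity. Taking $x=e_{12}$, $r=e_{23}$, $y=e_{32}$, $s=e_{21}$ gives $[[x,r],[y,s]]=[e_{13},e_{31}]=e_{11}-e_{33}$ and $[e_{11}-e_{33},e_{12}]=e_{12}\neq0$, so the identity fails once $k\geq3$; thus $k\leq2$ and $\dim_CRC\leq4$. If $k=2$ and $\text{\rm char}\,R\neq2$, then with $x=e_{12}$, $r=e_{21}$, $y=e_{12}$, $s=e_{11}$ one gets $[[x,r],[y,s]]=-2e_{12}\neq0$ and $[-2e_{12},e_{21}]=-2(e_{11}-e_{22})\neq0$, a contradiction; so $k=2$ forces $\text{\rm char}\,R=2$, that is, $\text{\rm char}\,R=2$ and $\dim_CRC=4$ — meaning $R$ is exceptional, contrary to hypothesis. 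Finally $k=1$ gives $RC=C$, so $R$ is commutative, also excluded. This contradiction proves the lemma.

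The step I expect to be the real obstacle is the first one: the structure theorem asserting that a noncentral Lie ideal of a non-exceptional prime ring contains $[I,R]$ for some nonzero ideal $I$ is itself a substantial result, and showing that its only exception is the exceptional case — a genuine subtlety in characteristic $2$ — is where the essential difficulty lies. Granted that input, the remainder is routine generalized-polynomial-identity bookkeeping, and it is a reassuring check that the only noncommutative matrix algebra surviving the identity analysis is precisely $\M_2$ over a field of characteristic $2$, matching the exceptional exclusion in the statement. (One can also avoid the sharp form of the structure theorem, using its cruder version — valid for prime rings not satisfying the standard identity $s_4$ — in the range $\dim_CRC>4$, and treating the case $\dim_CRC=4$ directly via the structure of $[RC,RC]$ as a Lie algebra.)
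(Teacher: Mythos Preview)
The paper does not supply its own proof of this lemma; it is quoted as \cite[Lemma~7]{lanski1972} and used as a standing input. So there is no in-paper argument to compare against, and your proposal must be judged on its own.

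As written, the plan has a circularity problem that you yourself flag. The structure theorem you invoke --- that every noncentral Lie ideal of a non-exceptional prime ring contains $[I,R]$ for some nonzero ideal $I$ --- is the \emph{main} theorem of \cite{lanski1972}, and in that paper Lemma~7 is one of the preliminary lemmas used on the way to it. Citing the structure theorem to prove Lemma~7 therefore reverses the logical order of the very source you appeal to. Your parenthetical escape route (use the cruder version of the structure theorem valid for prime rings not satisfying $s_4$, and handle $\dim_C RC=4$ directly) does break the circle and would give an honest argument, but you do not carry it out; and once you are reduced to $\dim_C RC\le 4$ the GPI and matrix-model machinery is heavier than the problem, since everything then lives in a $4$-dimensional $C$-algebra and can be done by hand. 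The original Lanski--Montgomery proof is elementary commutator calculus inside $R$, with no passage through PI theory. Your matrix verifications for $k\ge 3$ and for $k=2$ in characteristic $\ne 2$ are correct, so the bookkeeping half of your plan is fine; the issue is purely the first step.
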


Motivated by Theorem \ref{thm5} and Lemma \ref{lem5}, the structure of Lie ideals of exceptional prime rings is studied in Section 4.
As a consequence we extend Theorem \ref{thm5} as follows (i.e., Theorem \ref{thm20}). \vskip6pt

\noindent{\bf Theorem A.}\ {\it Let $R$ be a simple ring, and let $L$ be a Lie ideal of $R$. Then
$L\subseteq Z(R)$, $L=Z(R)a+Z(R)$ for some $a\in L\setminus Z(R)$, or $[R, R]\subseteq L$.}\vskip6pt

 Given a ring $R$, we use $\widetilde{R}$ to denote its minimal unitization, given by $\widetilde{R}=R$ if $R$ is unital, and by $\widetilde{R} = R\times \mathbb{Z}$ with coordinate-wise addition and multiplication $(x, m)(y, n)=(xy+nx+my, mn)$ if $R$ is non-unital. Therefore, if $A$ is a subset  of $R$, then $\widetilde{R}A\widetilde{R}$ is equal to the ideal of $R$ generated by $A$.

The aim of the paper focuses on the study of the connection between ideals and Lie ideals.
The following is the most known result (see, for instance, \cite[Lemma 2.1]{lee2022}).

\begin{thm}\label{thm18}
Let $R$ be a ring with a Lie ideal $L$. Then ${\widetilde R}[L, L]{\widetilde R}\subseteq L+L^2$.
 \end{thm}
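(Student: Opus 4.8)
The plan is to show that every generator of the ideal $\widetilde{R}[L,L]\widetilde{R}$, namely every element of the form $x[a,b]y$ with $a,b\in L$ and $x,y\in\widetilde R$, lies in the additive group $L+L^2$. The key algebraic identity to exploit is the fact that, for a Lie ideal $L$, commutators of the form $[r,a]$ with $r\in R$, $a\in L$ already lie in $L$, and more importantly the ``Leibniz-type'' expansion
\[
[a,b]r = [a,br] - b[a,r] = [a,br] + b[r,a],
\]
valid for all $a,b\in L$ and $r\in R$. Since $br\in R$ need not be in $L$, I would instead first absorb the right multiplier: I claim $[a,b]r\in L+L^2$ whenever $a,b\in L$ and $r\in R$. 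Indeed $[a,br]$ need not help directly, so the cleaner route is to write $[a,b]r = a(br) - (ba)r$ and play $ba\in L^2$ off against the Lie-ideal property; alternatively, one uses $[ab,r]=a[b,r]+[a,r]b\in L$, so $a[b,r] = [ab,r]-[a,r]b$. The real workhorse is the identity
\[
[[a,b],r] = [[a,r],b] + [a,[b,r]] \in L^2 + L^2 \subseteq L^2 \quad\text{(as additive group, together with }L\text{)},
\]
but since $[[a,b],r]\in L$ automatically (as $[a,b]\in L$), what actually needs proving is the two-sided absorption.

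The heart of the argument is therefore: first establish $[L,L]\widetilde R\subseteq L+L^2$, then establish $\widetilde R(L+L^2)\subseteq L+L^2$ enough to conclude. For the first, fix $a,b\in L$ and $r\in R$; then
\[
[a,b]\,r \;=\; \big[\,a,\,b\big]r,
\]
and I would use $[a,b]r = [a, br] - b[a,r]$. Here $[a,r]\in L$, so $b[a,r]\in L^2$. The remaining term $[a,br]$ is not obviously in $L+L^2$ because $br\notin L$; to handle it, apply the same identity again with the roles rearranged, or better, note $[a,br] = [a,b]r + b[a,r]$ is circular — so instead I would symmetrize: compute $[a,b]r + r[a,b]$ and $[a,b]r - r[a,b] = [[a,b],r]\in L$, reducing the two-sided problem to showing $[a,b]r + r[a,b]\in L+L^2$, i.e. the anticommutator. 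For the anticommutator, use $a(br)+ (br)a \pm\cdots$; the standard trick is $2xy = [x,y] + (xy+yx)$ and to recognize $ab+ba$ modulo $L$ is controlled by $L^2$. Concretely, for $a,b\in L$ one has $ab+ba = ab + ba$, and $ab = [a,b] + ba$, so $ab+ba = [a,b] + 2ba \in L + L^2$; iterating this bookkeeping across the multiplier $r$ is what drives everything.

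I expect the main obstacle to be the characteristic-$2$ bookkeeping: identities like $2xy=[x,y]+(xy+yx)$ are useless when $2=0$, so the argument must avoid dividing by $2$ and instead proceed purely through the substitution $ab = [a,b]+ba$ and the Lie-ideal closure $[L,R]\subseteq L$, pushing multipliers from $\widetilde R$ inward one factor at a time and collecting the resulting ``error terms'' in $L$ and in $L^2=L\cdot L$. The induction is on the length of the word $x\cdots$ on each side of $[a,b]$: the base case $x,y\in\{1\}\cup R$ is the computation above, and the inductive step reduces a word of length $k+1$ to words of length $k$ times an extra commutator correction lying in $L$, all of which stay inside $L+L^2$ since $L\cdot L\subseteq L^2$ and $L+L^2$ is clearly closed under the needed operations. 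Once both one-sided absorptions are in hand, $x[a,b]y\in L+L^2$ follows by applying the right-absorption to $[a,b]y$ and then the left-absorption (with the already-absorbed element playing the role of a sum of terms from $L$ and $L^2$, for which one checks $R\cdot L\subseteq L+L^2$ and $R\cdot L^2\subseteq L+L^2$ separately), completing the proof.
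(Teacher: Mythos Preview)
Your proposal contains a genuine gap, and it stems from overlooking the very fact you state at the outset. You write that in the identity
\[
[a,b]r \;=\; [a,br] \;-\; b[a,r],
\]
the term $b[a,r]$ lies in $L^2$ (correct), but then say ``the remaining term $[a,br]$ is not obviously in $L+L^2$ because $br\notin L$''. This is the slip: $[a,br]\in L$ immediately, since $a\in L$ and $L$ is a Lie ideal, regardless of whether $br\in L$. With that observation the one-sided absorption $[L,L]\widetilde R\subseteq L+L^2$ is a one-line computation, and symmetrically $r[a,b]=[ra,b]-[r,b]a\in L+L^2$ gives $\widetilde R[L,L]\subseteq L+L^2$.

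Your plan for the two-sided step is where the approach actually breaks. You propose to reduce to checking $R\cdot L\subseteq L+L^2$ and $R\cdot L^2\subseteq L+L^2$. The first inclusion is false in general: take $R=M_2(F)$ and $L=Z(R)=F\cdot 1$; then $R\cdot L=R$, while $L+L^2=F\cdot 1$. So the inductive scheme you outline cannot close up. The clean way to pass from one side to two sides---and this is exactly what the paper does in the analogous computation in Lemma~\ref{lem8}(v), the result itself being quoted from \cite{lee2022}---is to note that $[L,L]$ is itself a Lie ideal, so Lemma~\ref{lem8}(iv) gives $\widetilde R[L,L]\widetilde R=\widetilde R[L,L]$, and then the left-absorption identity $r[a,b]=[ra,b]-[r,b]a$ finishes the proof in one stroke.
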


In Theorem \ref{thm18}, if $L$ is not abelian then $L+L^2$ contains a nonzero ideal of $R$.  We will extend the result to the context of prime rings.
Precisely, we completely characterize Lie ideas $L$ and elements $a$ of a prime ring $R$ such that $L+aL$ contains a nonzero ideal of $R$.  In addition to being used in the sequel (see Proposition \ref{pro2}), this theorem (i.e., Theorem \ref{thm16}) is also very interesting in itself.\vskip6pt

\noindent{\bf Theorem B.}\ {\it Let $R$ be a prime ring with a noncentral Lie ideal $L$, and $a\in R$.

\noindent Case 1:\ $L$ is not abelian. If $a\in R\setminus Z(R)$, then $L+aL$ contains a nonzero ideal of $R$;

\noindent Case 2:\  $L$ is abelian. Then $LC=Cb+C$ for some $b\in L\setminus Z(R)$ and the following hold:

(i)\ If $a\in LC$, then $L+aL$ does not contain any nonzero ideal of $R$;

(ii)\ If $a\in [RC, RC]\setminus LC$, then $L+aL$ contains a nonzero ideal of $R$;

(iii)\ If $a\notin [RC, RC]$, then $L+aL$ contains a nonzero ideal of $R$ if and only if $[a, b]$ is a unit of $RC$.}
\vskip6pt

The following is a special case of \cite[Theorem 1.4]{lee2022}.

\begin{thm}\label{thm6}
Let $R$ be a ring satisfying the property that every proper ideal of $R$ is contained in a maximal ideal of $R$. If $\widetilde{R}[R, R]\widetilde{R}=R$, then $R=[R, R]+[R, R]^2$.
\end{thm}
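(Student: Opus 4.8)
The plan is to use Theorem~\ref{thm18} together with the maximal-ideal hypothesis to reduce the problem to one about simple rings. Put $L=[R,R]$ and $T:=L+L^{2}=[R,R]+[R,R]^{2}$; the goal is $T=R$. Applying Theorem~\ref{thm18} to the Lie ideal $L$ shows that the ideal $A:=\widetilde{R}\big[[R,R],[R,R]\big]\widetilde{R}$ generated by the second commutator is contained in $L+L^{2}=T$. Consequently, if $A=R$ then $T=R$, so the entire proof is reduced to establishing $A=R$. Note also that $\widetilde{R}[R,R]\widetilde{R}=R$ forces $R^{2}=R$, since $[R,R]\subseteq R^{2}$ and $\widetilde{R}R^{2}\widetilde{R}\subseteq R^{2}$.

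Suppose for contradiction that $A\neq R$. By hypothesis $A$ lies in a maximal ideal $M$; put $\bar R:=R/M$. Because $R^{2}=R$ we get $\bar R^{2}=\bar R\neq 0$, so $\bar R$ is a simple ring and hence unital, with centre a field and every nonzero central element a unit. The quotient map sends the ideal of $R$ generated by $[R,R]$ onto the ideal of $\bar R$ generated by $[\bar R,\bar R]$, so the hypothesis descends to $\widetilde{\bar R}[\bar R,\bar R]\widetilde{\bar R}=\bar R$; in particular $[\bar R,\bar R]\neq 0$, so $\bar R$ is noncommutative. Meanwhile $A\subseteq M$ gives $\big[[\bar R,\bar R],[\bar R,\bar R]\big]=0$, i.e. $[\bar R,\bar R]$ is an abelian Lie ideal of $\bar R$. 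Applying Lemma~\ref{lem5} with $K=L=[\bar R,\bar R]$, so that $[K,L]=0\subseteq Z(\bar R)$, we conclude that either $[\bar R,\bar R]\subseteq Z(\bar R)$ or $\bar R$ is exceptional. The latter is impossible: an exceptional $\bar R$ is a four-dimensional central simple algebra over its centre — a quaternion algebra, split or not — and a direct computation shows that its commutator space is never abelian (for instance $[e_{12},e_{21}]=e_{11}-e_{22}\neq 0$ in the split case). Hence $[\bar R,\bar R]\subseteq Z(\bar R)$.

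It remains to rule out a noncommutative simple ring $\bar R$ with $[\bar R,\bar R]\subseteq Z(\bar R)$. Here $[x^{2},y]=x[x,y]+[x,y]x=2x[x,y]$ is central for all $x,y$. If $\operatorname{char}\bar R\neq 2$, then $2$ is a central unit, and for any $x,y$ with $[x,y]\neq 0$ (itself a central unit) we get $x=(2x[x,y])[x,y]^{-1}2^{-1}\in Z(\bar R)$, forcing $[x,y]=0$; so $\bar R$ is commutative, contradicting $[\bar R,\bar R]\neq 0$. If $\operatorname{char}\bar R=2$, then $2x[x,y]=0$, so $x^{2}\in Z(\bar R)$ for every $x$; picking $a\notin Z(\bar R)$ and $b$ with $[a,b]\neq 0$, expanding $(ab)^{2}=a^{2}b^{2}+[b,a]\,ab$ and using that $a^{2},b^{2},(ab)^{2}$ and the nonzero element $[b,a]$ are central (so $[b,a]$ is a unit) yields $ab\in Z(\bar R)$; if $a^{2}\neq 0$ then $a^{2}$ is a central unit, $a^{-1}=a(a^{2})^{-1}\in Z(\bar R)+Z(\bar R)a$, and $b=a^{-1}(ab)\in Z(\bar R)+Z(\bar R)a$, a commutative subring, contradicting $[a,b]\neq 0$; hence $a^{2}=0$, but then $(a+1)^{2}=1$ while $a+1\notin Z(\bar R)$ forces $(a+1)^{2}=0$, so $1=0$ in $\bar R$, which is absurd. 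In every case we reach a contradiction, so $A=R$ and therefore $T=[R,R]+[R,R]^{2}=R$. The essential obstacle is precisely this last step — excluding simple rings with nonzero but central commutator space — where the characteristic-$2$ case genuinely requires the square-zero argument above; everything else is routine bookkeeping with quotients and the observation $R^{2}=R$.
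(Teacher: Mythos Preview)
The paper does not give its own proof of this theorem; it is quoted as a special case of \cite[Theorem 1.4]{lee2022}, so there is nothing to compare against line by line. Your overall strategy --- use Theorem~\ref{thm18} to reduce to showing $\widetilde{R}\big[[R,R],[R,R]\big]\widetilde{R}=R$, then pass to a simple quotient $\bar R=R/M$ and derive a contradiction from $\big[[\bar R,\bar R],[\bar R,\bar R]\big]=0$ --- is sound and is essentially how such results are proved.

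Two points deserve attention. First, the assertion ``$\bar R$ is a simple ring and hence unital'' is not true in general (e.g.\ the ring of finite-rank operators on an infinite-dimensional space). What saves you here is that once you know $0\neq[\bar R,\bar R]\subseteq Z(\bar R)$, any nonzero central $c$ satisfies $c\bar R=\bar R$ by simplicity, and from this one extracts a unit; so the conclusion survives, but the justification you give does not. Second, your entire final paragraph, together with the detour through Lemma~\ref{lem5} and the exceptional case (where, incidentally, you only treat the split quaternion algebra), is unnecessary: once $\big[[\bar R,\bar R],[\bar R,\bar R]\big]=0$ in the semiprime ring $\bar R$, Lemma~\ref{lem8}\,(ii) applied to each $a\in[\bar R,\bar R]$ gives $[\bar R,\bar R]\subseteq Z(\bar R)$, and then the \emph{same} lemma applied to each $a\in\bar R$ (since now $[a,[\bar R,\bar R]]\subseteq[\bar R,Z(\bar R)]=0$) forces $\bar R$ to be commutative, contradicting $[\bar R,\bar R]\neq 0$. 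This two-line argument replaces both the Lanski--Montgomery step and the characteristic-by-characteristic case analysis.
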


We remark that Theorem \ref{thm6} still holds if the given property is replaced by the property that every proper ideal
is contained in a prime ideal of $R$. Moreover, we can further get $R=[R, R]^2$ in Theorem \ref{thm6} (see \cite[Theorem 3.10]{calugareanu2024}). In fact, applying the argument
given in the proof of \cite[Theorem 3.10]{calugareanu2024}, we can prove that if $L$ is a Lie ideal of $R$ satisfying $L=[L, L]+[L, L]^2$, then $[L, L]\subseteq [L, L]^2$ and so $L=[L, L]^2$.
It is natural to ask the question:\ If $K$ is a Lie ideal of $R$, does then $K^2$ contain a nonzero ideal?
The following is due to \cite[Lemma 3.2 (4)]{gardella2024-1}.

\begin{lem}\label{lem1}
Let $R$ be a ring with a Lie ideal $L$.
Then $\widetilde{R}[L,L^2]\widetilde{R} \subseteq L^2$.
\end{lem}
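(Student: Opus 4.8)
The plan is to reduce to a single generator and then push outer ring elements inward through Leibniz-type identities, bookkeeping everything modulo $L^2$.

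First, since $L^2$ is additively spanned by products $bc$ with $b,c\in L$ and $[a,\sum_i b_ic_i]=\sum_i[a,b_ic_i]$, the group $[L,L^2]$ is spanned by elements $[a,bc]$ with $a,b,c\in L$; and $\widetilde R X\widetilde R$ is spanned by the $rxs$ with $r,s\in\widetilde R$, $x\in X$. So it suffices to prove $r[a,bc]s\in L^2$ for all $a,b,c\in L$ and all $r,s\in\widetilde R$. I would record two elementary facts to be used constantly: (i) $[L,L]\subseteq L^2$, since $[a,b]=ab-ba\in L^2$; and (ii) $L^2=LL$ is itself a Lie ideal (a product of Lie ideals is a Lie ideal), whence $[\widetilde R,L^2]=[R,L^2]\subseteq L^2$. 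Combining (i), (ii) and $[R,L]\subseteq L$, any product of three elements of $L$ may be permuted arbitrarily modulo $L^2$ (each adjacent transposition costs a term $[\,\cdot\,,\cdot\,]\,\ell$ or $\ell\,[\,\cdot\,,\cdot\,]\in L^2$), and any term $[r,w]$ with $w\in L^2$, or $\ell\ell'$ with $\ell,\ell'\in L$, may be discarded.

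Next comes the computation. Expanding $[a,bc]=[a,b]c+b[a,c]$ and using identities such as $r[a,b]=[ra,b]-[r,b]a$ together with $[x,yz]=[x,y]z+y[x,z]$, one drives the outer ring element $r$ (and afterwards $s$) inward, converting it into commutators $[r,\cdot],[s,\cdot]\in L$. Every resulting term is either manifestly in $L^2$ (a product $\ell\ell'$ of two members of $L$, or a member of $[R,L^2]$) or is a \emph{degree-three residual}: a product of three elements of $L$, at least one of them a commutator built from $r$ or $s$. For the one-sided case this already yields
\[
 r[a,bc]\ \equiv\ -[r,b]ac-b[r,c]a \pmod{L^2},
\]
and the two-sided case is treated by the same moves, bringing in $s$ afterwards and shuttling it across the $L^2$-part by (ii), at the cost of more bookkeeping of the same kind.

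The crux — and the step I expect to be the genuine obstacle — is to show these degree-three residuals lie in $L^2$. One cannot be cavalier here: $L^3\not\subseteq L^2$ in general (for $L$ the Lie ideal of a free algebra generated by $x$ one has $x^3\in L^3\setminus L^2$), so the residuals land in $L^2$ only through cancellation forced by their commutator factors. Permuting factors modulo $L^2$ rewrites $[r,b]ac+b[r,c]a$ as $a\big([r,b]c+[r,c]b\big)$, and $[r,b]c+[r,c]b=[r,bc]+\big[[r,c],b\big]\in L^2$, so the residual is $a\cdot w$ with $w\in L^2$ — still not visibly in $L^2$. Extracting the last cancellation from the explicit shape of $w$ (a sum of a $[R,L^2]$-commutator and an $[L,L]$-commutator) is the delicate point; I would attack it with one further round of Leibniz identities, or else invoke Theorem \ref{thm18}: since $[L,L^2]\subseteq\widetilde R[L,L]\widetilde R$, that theorem already confines $\widetilde R[L,L^2]\widetilde R$ inside $L+L^2$, and since the same ideal also lies inside the ideal generated by $L^2$, one would then argue that its degree-one (``$+L$'') component is forced into $L^2$. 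Once the residuals are cleared, reassembling the pieces gives $r[a,bc]s\in L^2$, hence $\widetilde R[L,L^2]\widetilde R\subseteq L^2$.
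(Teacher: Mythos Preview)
Your argument has a genuine gap at precisely the point you flag as ``the crux''. You correctly reduce the residual, modulo $L^2$, to $a\cdot w$ with $w=[r,bc]+\big[[r,c],b\big]$, and you correctly note that $w\in L^2$ while $L\cdot L^2\not\subseteq L^2$ in general. What you are missing is that $w$ also lies in $L$. The second summand $\big[[r,c],b\big]\in[L,L]\subseteq L$ is trivial; the point is that
\[
[r,bc]\;=\;[rb,c]+[cr,b]\;\in\;[R,L]+[R,L]\;\subseteq\;L,
\]
so in fact $[R,L^2]\subseteq L$ (equivalently $[R,\overline{L}]=[R,L]$; this is Lemma~\ref{lem8}\,(i)). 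Once $w\in L$ you get $a\cdot w\in L\cdot L=L^2$ and the residual is cleared. Neither of your proposed fallbacks supplies this: ``one further round of Leibniz identities'' is not an argument, and invoking Theorem~\ref{thm18} only lands you in $L+L^2$ with no mechanism offered to kill the $L$-component.

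The paper's own proof (Lemma~\ref{lem8}\,(v) with $m=2$) rests on exactly the inclusion $[R,L^2]\subseteq L$, but organizes the computation so that no residual bookkeeping is needed. First, since $[L,L^2]$ is itself a Lie ideal, Lemma~\ref{lem8}\,(iv) gives $\widetilde{R}[L,L^2]\widetilde{R}=\widetilde{R}[L,L^2]$, so only the left $\widetilde{R}$ has to be absorbed. Then one keeps $w\in L^2$ \emph{unexpanded} and writes
\[
r[a,w]\;=\;[ra,w]\;-\;[r,w]\,a.
\]
The first term lies in $[R,L^2]\subseteq L^2$ because $L^2$ is a Lie ideal; the second lies in $L\cdot L=L^2$ because $[r,w]\in[R,L^2]\subseteq L$. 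Expanding $[a,bc]=[a,b]c+b[a,c]$ at the outset, as you do, destroys the $L^2$-structure of the right slot and is what forces you into the degree-three residuals.
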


A ring $R$ is called {\it semiprime} if, given $a\in R$, $aRa=0$ implies $a=0$.
Given a Lie ideal $L$ in a semiprime ring $R$, it follows from \cite[Proposition 2.1]{gardella2024}
that $[L, L^2]=0$ if and only if $L$ is abelian. Together with Lemma \ref{lem1}, we have the following.

\begin{thm}\label{thm3}
Let $R$ be a semiprime ring with a Lie ideal $L$, which is not abelian. Then $L^2$ contains a nonzero ideal of $R$.
Precisely, we have $0\ne \widetilde{R}[L, L^2]\widetilde{R} \subseteq L^2$.
\end{thm}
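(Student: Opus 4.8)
The plan is to combine two inputs that are already available. Lemma~\ref{lem1} gives, for any ring $R$ and any Lie ideal $L$, the inclusion $\widetilde{R}[L,L^2]\widetilde{R}\subseteq L^2$ with no extra hypothesis; and \cite[Proposition~2.1]{gardella2024} tells us that in a semiprime ring the equality $[L,L^2]=0$ holds exactly when $L$ is abelian. So the whole task reduces to checking that, under the non-abelian hypothesis, the ideal $\widetilde{R}[L,L^2]\widetilde{R}$ is actually nonzero, which is a short semiprimeness argument.

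First I would note that, since $L$ is not abelian, \cite[Proposition~2.1]{gardella2024} forces $[L,L^2]\neq0$; fix $0\neq x\in[L,L^2]$. Next I would show $\widetilde{R}x\widetilde{R}\neq0$: since $R\subseteq\widetilde{R}$ we have $xRx\subseteq\widetilde{R}x\widetilde{R}$, so if $\widetilde{R}x\widetilde{R}=0$ then $xRx=0$, whence $x=0$ by semiprimeness, a contradiction. Therefore $0\neq\widetilde{R}x\widetilde{R}\subseteq\widetilde{R}[L,L^2]\widetilde{R}$. Finally, recall from the introduction that $\widetilde{R}A\widetilde{R}$ is the ideal of $R$ generated by $A$, so $\widetilde{R}[L,L^2]\widetilde{R}$ is an ideal of $R$; by Lemma~\ref{lem1} it is contained in $L^2$. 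Combining the displays yields $0\neq\widetilde{R}[L,L^2]\widetilde{R}\subseteq L^2$, which is precisely the claim.

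I do not expect any genuine obstacle once the two quoted results are granted; the only step requiring care is the passage from $[L,L^2]\neq0$ to $\widetilde{R}[L,L^2]\widetilde{R}\neq0$, where semiprimeness is essential — over an arbitrary ring a nonzero element can generate the zero ideal. If one wanted a self-contained proof, the real work would be reproving \cite[Proposition~2.1]{gardella2024}, i.e., linearizing the identity $[a,bc]=0$ for $a,b,c\in L$, exploiting $[L,R]\subseteq L$, and invoking semiprimeness to force $[L,L]=0$; but since that proposition is cited, we do not carry out this part.
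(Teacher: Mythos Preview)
Your proof is correct and follows exactly the paper's approach: the paper derives the theorem in the sentence immediately preceding its statement, by combining Lemma~\ref{lem1} with \cite[Proposition~2.1]{gardella2024}, just as you do. One small correction: the passage from $[L,L^2]\neq0$ to $\widetilde{R}[L,L^2]\widetilde{R}\neq0$ does \emph{not} need semiprimeness, and the aside that ``over an arbitrary ring a nonzero element can generate the zero ideal'' is false --- since $\widetilde{R}$ is unital by construction, any $0\neq x\in R$ satisfies $x=1\cdot x\cdot 1\in\widetilde{R}x\widetilde{R}$, so the generated ideal is automatically nonzero.
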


Our next aim is to study commutators and products of noncentral Lie ideals of prime rings.
Motivated by Lemma \ref{lem5} and Theorem \ref{thm3}, it is natural to study noncentral Lie ideals $K, L$ of a semiprime ring $R$ satisfying the identity $[K^m, L^n]=0$ for some positive integers $m, n$. Our study is also reduced to the prime case since every semiprime ring is a subdirect product of prime rings.
The following is Theorem \ref{thm2}. \vskip6pt

\noindent{\bf Theorem C.} {\it Let $R$ be a prime ring with noncentral Lie ideals $K, L$, and positive integers $m, n$. Then the following are equivalent:

(i)\ $[K^m, L^n]=0$;

(ii)\ $[K, L]=0$;

 (iii)\ $R$ is exceptional and $KC=LC=Ca+C$ for any noncentral element $a\in L$.}\vskip6pt

Motivated by Theorem \ref{thm3}, given noncentral Lie ideals $K$ and $L$ in a prime ring $R$, we consider whether the product $K^mL^n$ contains a nonzero ideal of $R$, where $m, n$ are positive integers.
A complete solution is given as follows.\vskip6pt

\begin{thm}\label{thm12}
Let $R$ be a prime ring with noncentral Lie ideals $K, L$, and positive integers $m, n$.
Then $K^mL^n$ contains a nonzero ideal of $R$ except when $KC=LC=Ca+C$ for any noncentral element $a\in K$.
\end{thm}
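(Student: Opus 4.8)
I prove the two directions of the ``except when'' separately. \emph{The ``only if'' direction:} suppose $KC=LC=Ca+C$ for a noncentral $a\in K$. Then $Ca+C$ is commutative, so $K$ is a noncentral abelian Lie ideal, and Lemma~\ref{lem5} (with $L=K$) forces $R$ to be exceptional; in particular $RC$ is a $4$-dimensional central simple $C$-algebra. Since $Ca+C$ is a Lie ideal of $RC$ and $a\notin C$, one has $a^2\in C$, whence $(Ca+C)^k=Ca+C$ for every $k\geq 1$. Consequently $K^mL^n\subseteq(KC)^m(LC)^n=Ca+C$, a $2$-dimensional $C$-subspace of $RC$. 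As every nonzero ideal $I$ of $R$ satisfies $IC=RC$ (simplicity of $RC$), no nonzero ideal of $R$ is contained in $Ca+C$, and therefore none is contained in $K^mL^n$.

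\emph{Reduction of the ``if'' direction to $m=n=1$:} the powers $K^m$ and $L^n$ are again noncentral Lie ideals (a product of Lie ideals is a Lie ideal), and the exceptional condition of the theorem holds for $(K,L)$ if and only if the condition ``$R$ is exceptional and $(K^m)C=(L^n)C$ is $2$-dimensional over $C$'' holds for $(K^m,L^n)$ — indeed, if one of $K,L$ is non-abelian both conditions fail (for $(K^m,L^n)$ because then the corresponding power has central closure $RC$), while if $K$ and $L$ are both abelian then $R$ is exceptional and $(K^m)C=KC$, $(L^n)C=LC$. So it suffices to prove: \emph{for noncentral Lie ideals $U,V$ of a prime ring $R$, the product $UV$ contains a nonzero ideal of $R$, unless $R$ is exceptional and $UC=VC$ is $2$-dimensional over $C$}.

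\emph{The non-exceptional case of this Claim:} if $R$ is not exceptional, then $U$ and $V$, being noncentral, are non-abelian (Lemma~\ref{lem5}), and the classical structure theory of Lie ideals of prime rings (cf.\ \cite{herstein1969,lanski1972}) provides nonzero ideals $I,J$ of $R$ with $[I,R]\subseteq U$ and $[J,R]\subseteq V$. Putting $I_0:=I\cap J$, a nonzero ideal, we obtain $UV\supseteq[I_0,R][I_0,R]=[I_0,R]^2$. The Lie ideal $[I_0,R]$ is noncentral and cannot be abelian (otherwise Lemma~\ref{lem5} would give $[I_0,R]\subseteq Z(R)$, forcing $R$ to be exceptional), so Theorem~\ref{thm3} yields $0\ne\widetilde{R}\big[[I_0,R],[I_0,R]^2\big]\widetilde{R}\subseteq[I_0,R]^2\subseteq UV$, and $UV$ contains a nonzero ideal.

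\emph{The exceptional case of the Claim — the main obstacle:} assume now $R$ exceptional. A short case analysis shows that, whenever the stated exception fails, $(UV)C=RC$: one uses that the only non-abelian Lie ideals of the $4$-dimensional algebra $RC$ are $[RC,RC]$ and $RC$, together with the linear-algebra fact that if $c_1^2,c_2^2\in C$, $c_1,c_2\notin C$, $R$ is exceptional, and $Cc_1+C\ne Cc_2+C$, then $1,c_1,c_2,c_1c_2$ are linearly independent over $C$, so that $(Cc_1+C)(Cc_2+C)=RC$. The difficult part, and the step I expect to be the main obstacle, is to upgrade ``$(UV)C=RC$'' to ``$UV$ contains a nonzero ideal of $R$'': a Lie ideal $W$ with $WC=RC$ need not by itself contain a nonzero ideal, so the product structure of $UV$ must be exploited. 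The plan is to use that for a noncentral $c$ lying in $U$ or in $V$ with $c^2\in C$ one has $c[c,R]\subseteq[c,R]$ and $[c,R]c\subseteq[c,R]$, and that $[c,r]^2\in Z(R)$ for every $r$, with $[c,r]^2\ne 0$ for a suitable $r$ (since $[c,R]$ $C$-spans the $2$-dimensional space $Cc+C$ whereas its square-zero elements fill at most a line); feeding these observations, and Theorem~\ref{thm16}, into a careful computation — treating the cases $RC\cong M_2(C)$ and $RC$ a quaternion division algebra separately and keeping track of the parities of $m$ and $n$ — should produce a nonzero ideal inside $UV$. The quaternion-division-algebra subcase, where $RC$ has no zero divisors, is where the argument is most delicate.
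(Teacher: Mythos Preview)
Your reduction to the case $m=n=1$ and your treatment of the non-exceptional case are correct, and indeed this is close in spirit to what the paper does: the paper derives Theorem~\ref{thm12} from Theorem~\ref{thm11}, whose inductive base is exactly the two-factor statement (Proposition~\ref{pro2}). Your non-exceptional argument via $[I_0,R]^2$ is a legitimate alternative to the paper's Case~1 of Proposition~\ref{pro2}, which instead uses that $[K,L]$ non-abelian forces $(K\cap L)^2$ to contain a nonzero ideal.

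The genuine gap is in the exceptional case. You only offer a \emph{plan}: show $(UV)C=RC$ and then ``upgrade'' this to an honest ideal inside $UV$. But as you yourself note, $(UV)C=RC$ does not by itself yield an ideal of $R$ inside $UV$; the phrases ``should produce'' and ``most delicate'' signal that the argument is not carried out. The paper's Proposition~\ref{pro2} handles this case completely, and by a route different from your sketch. When one of $K,L$ (say $L$) is non-abelian, the paper sets $I:=\widetilde{R}[L,L]\widetilde{R}$, uses $[I,R]\subseteq L$, locates a nonzero $\beta\in Z(R)\cap K$ via Lemma~\ref{lem7}, and then invokes Corollary~\ref{cor5} inside the prime ring $I$ to get a nonzero ideal $N$ of $I$ with $\beta N\subseteq KL$; the ideal $\beta INI$ of $R$ then sits inside $KL$. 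When both $K$ and $L$ are abelian with $KC\ne LC$, the paper writes $KC=Ca+C$, $LC=Cb+C$, produces a nonzero $\gamma\in Z(R)\cap L$ from $\big[b,[R,R]\big]\subseteq C$, observes $0\ne\delta:=[a,b]\in C$, and checks directly that $\gamma\delta R\subseteq KL$. Neither step requires separating the matrix and quaternion subcases, nor tracking parities of $m,n$; the key device you are missing is the systematic use of the nonzero central elements furnished by $\big[[RC,RC],[RC,RC]\big]\subseteq C$ (Lemma~\ref{lem7}) to pull an ideal of $R$ down into $KL$.
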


Actually, we will prove the following more general theorem. This is Theorem \ref{thm11}. We also note that Theorem \ref{thm12} is its immediate consequence.\vskip6pt

\noindent{\bf Theorem D.} {\it Let $R$ be a prime ring with noncentral Lie ideals $K_1,\ldots,K_m$ with $m\geq 2$.
Then $K_1K_2\cdots K_m$ contains a nonzero ideal of $R$ except when, for $1\leq j\leq m$, $K_1C=K_jC=Ca+C$ for any noncentral element $a\in K_1$.}
\vskip6pt

Finally, we extend Theorem C to a more general form  from the viewpoint of centralizers. This is Theorem \ref{thm13}.\vskip6pt

\noindent{\bf Theorem E.} {\it Let $R$ be a prime ring with noncentral Lie ideals $K_1,\ldots,K_m, L_1, \ldots,L_n$, where $m, n\geq 1$.
Then $\big[K_1\cdots K_m, L_1\cdots L_n\big]=0$ if and only if, for all $j, k$, we have $K_1C=K_jC=L_kC=Ca+C$ for any noncentral element $a\in K_1$.}
\vskip6pt

Whenever it is more convenient, we will use the widely accepted shorthand “iff”
for “if and only if” in the text.

\section{Preliminaries}
 We begin with some basic properties of rings.

\begin{lem}\label{lem8} Let $R$ be a ring.

(i)\ If $A$ is an additive subgroup of $R$, then $[A, R]=[\overline A, R]$.\vskip6pt

(ii)\ If $R$ is semiprime and if $[a, [R, R]]=0$ where $a\in R$, then $a\in Z(R)$.

(iii)\ If $R$ is prime and if $a[b, R]\subseteq Z(R)$ where $a, b\in R$, then $a=0$ or $b\in Z(R)$.

(iv)\ $\widetilde{R}L\widetilde{R}=\widetilde{R}L$ for any Lie ideal $L$ of $R$.

(v)\ If $L$ is a Lie ideal of $R$ and $m>1$ is a positive integer, then
$$
\widetilde{R}[L^{m-1}, L^m]\widetilde{R}\subseteq L^m\ \text{\rm and}\ \ \big[\widetilde{R}[L^{m-1}, L^m]\widetilde{R}, R]\subseteq L^m\cap L.
$$

(vi)\  If $L$ is a Lie ideal of $R$, then $\big[R, {\widetilde R}[L, L]{\widetilde R}\big]\subseteq L$.
\end{lem}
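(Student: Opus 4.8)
The plan is to dispatch the six items essentially independently, in the order given, since each is a short structural fact and several of the later ones are engineered to feed the main theorems. For (i), one inclusion is trivial since $A\subseteq\overline A$; for the reverse, $\overline A$ is spanned by products $a_1a_2\cdots a_k$ with $a_i\in A$, and a Leibniz-type expansion $[a_1\cdots a_k,r]=\sum_i a_1\cdots a_{i-1}[a_i,r]a_{i+1}\cdots a_k$ shows $[\overline A,R]\subseteq \overline A\,[A,R]\,\overline A$ is already a consequence once one notices each $[a_i,r]\in[A,R]$ and $[A,R]$ is itself a Lie ideal — actually the cleanest route is induction on $k$ using $[xy,r]=x[y,r]+[x,r]y$ together with the fact that $[A,R]\subseteq A$ only when $A$ is a Lie ideal, so instead I would argue $[\overline A,R]\subseteq \overline{A}\cdot[A,R]+[A,R]\cdot\overline A\subseteq[A,R]$ after first checking $[A,R]$ is stable under left and right multiplication by $A$ — hmm, this needs care, so the safe formulation is: prove by induction that $[a_1\cdots a_k,R]\subseteq \sum_i \overline A[a_i,R]\overline A$ and then observe the right side lies in $[A,R]$ because $[A,R]$ is a two-sided $\overline A$-sub-bimodule, which follows from $b[a,r]=[ba,r]-[b,r]a\in[A,R]$ for $b\in A$. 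For (ii), from $[a,[R,R]]=0$ one gets $a$ centralizes the Lie ideal $[R,R]$; applying the Jacobi/Leibniz identity, $[a,xry-xyr]=0$ style manipulations give $[[a,x],R]=0$ for all $x$, hence each $[a,x]$ is central, and then $[a,x]R[a,x]=0$ via a standard commutator trick forces $[a,x]=0$ by semiprimeness, i.e.\ $a\in Z(R)$ — alternatively cite that this is the classical fact that the centralizer of $[R,R]$ in a semiprime ring is the center.

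For (iii), from $a[b,R]\subseteq Z(R)$ I would write $a[b,r]=z_r\in Z(R)$ and replace $r$ by $rs$: $a[b,rs]=a[b,r]s+ar[b,s]=z_rs+ar[b,s]\in Z(R)$; commuting with an arbitrary $t$ kills the central part and yields $[ar[b,s],t]=0$, then further manipulation (another substitution plus primeness) separates the roles of $a$ and $[b,s]$, leading to $a=0$ or $[b,s]=0$ for all $s$. The cleanest packaging: $a[b,r]\in Z(R)$ implies $\bigl[a[b,r],R\bigr]=0$, expand and use that $[b,r]$ ranges over a Lie ideal; if $a\neq 0$, primeness upgrades "$a(\text{something})=0$ in a central-annihilator sense" to the something being $0$, giving $b\in Z(R)$. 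Item (iv) is immediate: $\widetilde R L\subseteq \widetilde R L\widetilde R$ is obvious (take the unit on the right), and conversely for $\ell\in L$, $r\in R$ one has $\ell r=r\ell+[\ell,r]\in R\ell + L\subseteq \widetilde R L$ since $[\ell,r]\in L$, so $L\widetilde R\subseteq \widetilde R L$ and hence $\widetilde R L\widetilde R\subseteq \widetilde R L$; the unital/non-unital bookkeeping for $\widetilde R$ is routine.

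For (v), the first inclusion $\widetilde R[L^{m-1},L^m]\widetilde R\subseteq L^m$ I would get from Lemma \ref{lem1} applied with the Lie ideal $L^{m-1}$ in place of $L$: note $L^m=(L^{m-1})\cdot L\supseteq$ hmm — more directly, $L^{m-1}$ is a Lie ideal, $(L^{m-1})^2\supseteq L^{2m-2}$, and one checks $[L^{m-1},L^m]\subseteq[L^{m-1},(L^{m-1})^2]$ when $m\ge 2$ because $L^m\subseteq L^{2m-2}=(L^{m-1})^2$ for $m\ge 2$; then Lemma \ref{lem1} gives $\widetilde R[L^{m-1},(L^{m-1})^2]\widetilde R\subseteq (L^{m-1})^2=L^{2m-2}\subseteq L^m$, wait that last containment needs $2m-2\ge m$ i.e.\ $m\ge 2$, fine — so actually I land in $L^{2m-2}$, and I must double-check it sits inside $L^m$, which it does for $m\ge2$. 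For the second inclusion, set $I:=\widetilde R[L^{m-1},L^m]\widetilde R$, an ideal contained in $L^m$; then $[I,R]\subseteq L^m$ trivially, and $[I,R]\subseteq L$ follows from (vi)-type reasoning since $I\subseteq \widetilde R[L,L]\widetilde R$ (as $L^{m-1},L^m\subseteq L$ so $[L^{m-1},L^m]\subseteq[L,L]$), whence $[R,I]\subseteq[R,\widetilde R[L,L]\widetilde R]\subseteq L$ by (vi); combining, $[I,R]\subseteq L^m\cap L$. Finally (vi): this is exactly Theorem \ref{thm18} combined with a short commutator computation — from $\widetilde R[L,L]\widetilde R\subseteq L+L^2$ we get $[R,\widetilde R[L,L]\widetilde R]\subseteq[R,L+L^2]=[R,L]+[R,L^2]\subseteq L+[R,L^2]$, and $[R,L^2]\subseteq L$ because for $x,y\in L$, $[r,xy]=[r,x]y+x[r,y]\in Ly+xL\subseteq L^2$... that is not obviously in $L$; the right move is instead to use Lemma \ref{lem1} with the trivial observation $[R,L^2]=[L^2,R]$ and... actually the clean statement is that $\widetilde R[L,L]\widetilde R$ is an ideal $J$ with $J\subseteq L+L^2$, and $[R,J]\subseteq J$ since $J$ is an ideal hence a Lie ideal, but we want $[R,J]\subseteq L$ not just $\subseteq J$ — so I would note $[R,L+L^2]\subseteq L+L^2$ and separately that $[R,L^2]\subseteq L$ follows from Lemma \ref{lem1} (taking $m=2$ there: $\widetilde R[L,L^2]\widetilde R\subseteq L^2$, and more to the point $[R,L^2]\subseteq[R,\widetilde R[L,L^2]\widetilde R]$? no). The honest statement: $[R,\widetilde R[L,L]\widetilde R]\subseteq L$ is \cite[Lemma 2.1]{lee2022}-adjacent and I expect it follows by writing $\widetilde R[L,L]\widetilde R=[L,L]+[L,L]R$ type expansions and pushing commutators inside; this bookkeeping is the \textbf{main obstacle}, since (i)--(iv) are genuinely routine and (v) reduces to (vi) plus Lemma \ref{lem1}. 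So the real work is a careful commutator expansion establishing (vi), after which everything else falls into place.
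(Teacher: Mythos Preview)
Your handling of (i)--(iv) is broadly fine, though for (i) the paper uses a much cleaner induction via the identity $[ab,x]=[b,xa]+[a,bx]$, which lands each $[a_1\cdots a_n,x]$ directly in $[A,R]$ without any bimodule bookkeeping.

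The genuine gap is in (v) and (vi), and the issue is that you never deploy the tool you just proved in (i). Your route to the first inclusion of (v) via Lemma~\ref{lem1} breaks: you need $L^m\subseteq(L^{m-1})^2=L^{2m-2}$, which is false for $m\ge 3$ (there is no reason $L^3\subseteq L^4$ unless $L\subseteq L^2$), and the reverse containment $L^{2m-2}\subseteq L^m$ is equally unfounded. Your attempt at the second inclusion of (v) then asserts $L^{m-1}\subseteq L$, again false for $m\ge 3$. And in (vi) you are stuck precisely at $[R,L^2]\subseteq L$.

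All three obstructions dissolve with one observation: $L^k\subseteq\overline L$ for every $k\ge 1$, so by part (i),
\[
[R,L^k]\subseteq[R,\overline L]=[R,L]\subseteq L.
\]
This single line gives (vi) immediately from Theorem~\ref{thm18}: $[R,\widetilde R[L,L]\widetilde R]\subseteq[R,L+L^2]\subseteq[R,\overline L]=[R,L]\subseteq L$. It also gives the second assertion of (v) once the first is known, since $[\widetilde R[L^{m-1},L^m]\widetilde R,R]\subseteq[L^m,R]\subseteq L$. For the first assertion of (v) the paper does not go through Lemma~\ref{lem1} at all; it uses (iv) to write $\widetilde R[L^{m-1},L^m]\widetilde R=\widetilde R[L^{m-1},L^m]$ and then the identity $x[a,b]=[xa,b]-[x,b]a$ to get
\[
\widetilde R[L^{m-1},L^m]\subseteq[\widetilde RL^{m-1},L^m]+[\widetilde R,L^m]L^{m-1}\subseteq L^m+LL^{m-1}=L^m,
\]
using $[\widetilde R,L^m]\subseteq[R,L^m]\subseteq L$ from the displayed line above. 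So part (i) is not a warm-up; it is the engine driving both (v) and (vi), and recognising that is the missing idea in your plan.
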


\begin{proof}
(i)\ Let $a_1,\ldots,a_n\in A$ and $x\in R$. By induction on $n$, we get
\begin{equation*}
[a_1a_2\cdots a_n, x]=[a_2\cdots a_n, xa_1]+ [a_1, a_2\cdots a_nx]\in [A, R].
\end{equation*}
Thus $[A, R]=[\overline A, R]$.

(ii)\ See \cite[Lemma 1.5]{herstein1969}.

(iii)\ Let $x\in R$. Then
$a[b, xb]=a[b, x]b\in Z(R)$.
Since $a[b, x]\in Z(R)$, either $a[b, x]=0$ or $b\in Z(R)$. In either case, we have $a[b, x]=0$.
Therefore, $a[b, x]=0$ for all $x\in R$. Then $a[b, Rx]=0$ for all $x\in R$ and so $aR[b, x]=0$ for all $x\in R$.
The primeness of $R$ implies that   either $a=0$ or $b\in Z(R)$.

(iv)\ Note that $xay=x[a, y]+xya$ for $x, y\in \widetilde{R}$ and $a\in L$. Therefore,
$$
\widetilde{R}L\widetilde{R}\subseteq \widetilde{R}[L, \widetilde{R}]+\widetilde{R}^2L\subseteq \widetilde{R}L,
$$
as desired.

(v)\ Note that $[L^{m-1}, L^m]$ is a Lie ideal of $R$.
It follows from (i) that
 \begin{eqnarray}
[\widetilde{R}, L^m]=[R, L^m]\subseteq [R, \overline L]=[R, L]\subseteq L.
\label{eq:6}
\end{eqnarray}
By (iv) and the fact that $ [\widetilde{R}L^{m-1}, L^m]\subseteq [R, L^m]\subseteq L^m$, we have
 \begin{eqnarray*}
\widetilde{R}[L^{m-1}, L^m]\widetilde{R} &=&\widetilde{R}[L^{m-1}, L^m]\\
&\subseteq & [\widetilde{R}L^{m-1}, L^m]+[\widetilde{R}, L^m]L^{m-1}\\
&\subseteq &L^m+LL^{m-1}\\
&= &L^m,
 \end{eqnarray*}
where the first inclusion above follows from the fact that $x[a, b]=[xa, b]-[x, b]a$ for $x, a, b\in R$.
Finally,
$\big[\widetilde{R}[L^{m-1}, L^m]\widetilde{R}, R\big]\subseteq [L^m, R]\subseteq L$ by Eq.\eqref{eq:6}.

(vi)\ In view of Theorem \ref{thm18}, we have ${\widetilde R}[L, L]{\widetilde R}\subseteq L+L^2$. By (i) we have
$$
\big[R, {\widetilde R}[L, L]{\widetilde R}\big]\subseteq [R, L+L^2]\subseteq [R, \overline L]=[R, L]\subseteq L,
$$
as desired.
\end{proof}

\begin{lem} (\cite[Lemma 4.3]{calugareanu2024})
Let $R$ be a prime ring. If $a\in
RC\setminus C$, then $\dim_C\big[a, RC\big]>1$.
\label{lem14}
\end{lem}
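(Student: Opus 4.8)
The plan is to argue by contradiction, working inside the central closure $RC$, which is again a prime ring whose center is the field $C$. Suppose $\dim_C[a,RC]\le 1$. Since $a$ is noncentral in $RC$, the inner derivation $x\mapsto[a,x]$ on $RC$ is nonzero, so $[a,RC]=Cd$ for some nonzero $d\in RC$. Because $C$ is a field and $d\ne 0$, there is a well-defined $C$-linear functional $\phi\colon RC\to C$ with $[a,x]=\phi(x)d$ for all $x$; note that $\phi\ne 0$ and $\phi(1)=0$.

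The whole argument is driven by the Leibniz rule $[a,xy]=[a,x]y+x[a,y]$, which in this notation reads
\[
\phi(xy)\,d=\phi(x)\,dy+\phi(y)\,xd\qquad (x,y\in RC).
\]
Fixing $t\in RC$ with $\phi(t)\ne 0$ and solving this identity for $xd$ (set $y=t$) and for $dy$ (set $x=t$), I would obtain
\[
RCd\subseteq Cd+Cdt \qquad\text{and}\qquad dRC\subseteq Cd+Ctd,
\]
so each of these one-sided ideals has $C$-dimension at most $2$. Multiplying the second inclusion on the left by $RC$ and using $(RC\,t)d\subseteq RCd$, this gives $RCdRC=RC(dRC)\subseteq RCd$, whence the two-sided ideal $RCdRC$ of $RC$ has $\dim_C RCdRC\le 2$; it is nonzero because $d\ne 0$ and $RC$ is prime.

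To finish I would invoke primeness twice. A prime ring acts faithfully by left multiplication on any nonzero ideal, so $RC$ embeds in $\mathrm{End}_C(RCdRC)$; hence $\dim_C RC\le 4$, and a finite-dimensional prime algebra over a field is simple (its Jacobson radical is a nilpotent ideal, hence $0$, so it is semisimple Artinian, and primeness leaves only one simple factor). Being simple, $RC$ has no proper nonzero ideal, so $RCdRC=RC$ and therefore $\dim_C RC\le 2$. A unital $C$-algebra of dimension at most $2$ is commutative, forcing $RC=Z(RC)=C$, which contradicts $a\in RC\setminus C$.

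The one delicate point is exactly this last transition. One is tempted to ``cancel $d$'' in $[a,x]=\phi(x)d$, but $d$ need not be invertible, so the proof cannot simply divide; it must instead manufacture the small nonzero ideal $RCdRC$ and exploit the faithfulness of a prime ring's action on its nonzero ideals. The bookkeeping with $\phi$ and the collapse of a low-dimensional prime algebra are routine.
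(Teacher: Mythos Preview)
The paper does not supply its own proof of this lemma; it merely quotes it from \cite{calugareanu2024}. So there is nothing to compare your argument against on the paper's side.

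Your argument is correct. The functional $\phi$ is well defined because $C$ is a field and $d\ne 0$; the Leibniz identity indeed yields $RCd\subseteq Cd+Cdt$ and $dRC\subseteq Cd+Ctd$ once you fix $t$ with $\phi(t)\ne 0$; and then $RCdRC\subseteq RCd$ has $C$-dimension at most $2$. The faithfulness of the left action of a prime ring on a nonzero ideal is standard (if $rI=0$ then $rRCi=0$ for any $0\ne i\in I$, forcing $r=0$), so the embedding $RC\hookrightarrow \mathrm{End}_C(RCdRC)$ gives $\dim_C RC\le 4$. The reduction ``finite-dimensional prime $C$-algebra $\Rightarrow$ simple'' via the nilpotency of the Jacobson radical is also sound, and the final collapse to $\dim_C RC\le 2$ and hence commutativity is immediate. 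One minor remark: your use of $1\in RC$ is legitimate, since $C$ is the center of $Q_s(R)$ and is a field, so $1\in C\subseteq RC$.
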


\begin{lem}
\label{lem4}
Let $R$ be a ring with a Lie ideal $L$. If $L+V=R$, where $V$ is an additive subgroup of $R$ satisfying $[V, V]=0$, then $[R, R]\subseteq L$.
\end{lem}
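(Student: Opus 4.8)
The plan is to argue directly from the definition of $[R,R]$ as the additive subgroup generated by the commutators $[r,s]$ with $r,s\in R$, exploiting the decomposition $R=L+V$. Since $L$ is an additive subgroup of $R$, it suffices to check that $[r,s]\in L$ for every pair $r,s\in R$; this then forces $[R,R]\subseteq L$.

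Concretely, given $r,s\in R$, first write $r=\ell+v$ and $s=\ell'+v'$ with $\ell,\ell'\in L$ and $v,v'\in V$, and expand the commutator bilinearly:
\begin{equation*}
[r,s]=[\ell,\ell']+[\ell,v']+[v,\ell']+[v,v'].
\end{equation*}
The first three summands all lie in $[L,R]$: we have $[\ell,\ell'],[\ell,v']\in[L,R]$ since $\ell\in L$ and $\ell',v'\in R$, and $[v,\ell']=-[\ell',v]\in[L,R]$ since $\ell'\in L$ and $v\in R$. As $L$ is a Lie ideal, $[L,R]\subseteq L$, so the sum of these three terms lies in $L$. The remaining summand $[v,v']$ lies in $[V,V]$, which is $0$ by hypothesis. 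Hence $[r,s]\in L$, and running $r,s$ over all of $R$ gives $[R,R]\subseteq L$.

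As for obstacles: there really are none of substance — this is a one-line computation once the right decomposition is in hand. The only thing to be careful about is the bookkeeping that each mixed term $[\ell,v']$ and $[v,\ell']$ is absorbed by the Lie ideal property (one of the two entries always lies in $L$), and that the hypothesis $[V,V]=0$ is invoked precisely once, to kill the single remaining term $[v,v']$. Note that no primeness, characteristic, or dimension assumptions enter, which is consistent with the lemma being stated for an arbitrary ring $R$.
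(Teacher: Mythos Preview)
Your proof is correct and follows essentially the same approach as the paper: both use the decomposition $R=L+V$, expand $[R,R]=[L+V,L+V]$ bilinearly, absorb the terms involving $L$ via the Lie ideal property, and kill the remaining $[V,V]$ term by hypothesis. The paper just compresses this into a single displayed line.
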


\begin{proof}
Since $[V, V]=0$, we have
$$
[R, R]=[L+V, L+V]\subseteq [L, L]+[L, V]\subseteq L,
$$
as desired.
\end{proof}

Let $R$ be a prime PI-ring. It is known that $Z(R)\ne 0$ and $C$ is equal to the quotient field of $Z(R)$.
In this case, $RC$ is a finite-dimensional central simple algebra. See \cite[Corollary 1]{rowen1973} for details. Therefore, the following lemma is reduced to the case that $R$ is a simple ring (see \cite[Lemma 2.3]{lee2022}).

\begin{lem} \label{lem7}
Let $R$ be a prime ring. Then
$0\ne \big[[R, R], [R, R]\big]\subseteq Z(R)$ iff $R$ is exceptional.
\end{lem}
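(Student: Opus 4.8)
The plan is to prove Lemma \ref{lem7}, the characterization $0\ne [[R,R],[R,R]]\subseteq Z(R)$ iff $R$ is exceptional, by reducing to the simple (indeed finite-dimensional central simple) case. First I would dispose of the ``if'' direction: assume $R$ is exceptional, so $\operatorname{char}R=2$ and $\dim_C RC=4$. Then $RC\cong M_2(F)$ for $F$ a quadratic extension of $C$ (or $M_2(C)$), and a direct matrix computation in characteristic $2$ shows that $[[RC,RC],[RC,RC]]$ is a nonzero subset of $C\cdot 1$; since $[R,R]\subseteq RC$ scales up to $[RC,RC]$ via $C$, one checks $0\ne[[R,R],[R,R]]\subseteq Z(R)$. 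This is essentially the classical fact that in $\mathfrak{sl}_2$ over a field of characteristic $2$ the derived algebra is spanned by a single (central-acting) element; I would cite \cite[Lemma 2.3]{lee2022} here, since the lemma statement explicitly points there.

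For the ``only if'' direction, assume $0\ne[[R,R],[R,R]]\subseteq Z(R)$. The key structural input is that $[R,R]$ is a noncentral Lie ideal (it is noncentral because $[[R,R],[R,R]]\ne0$ forces $[R,R]\not\subseteq Z(R)$), and applying Lemma \ref{lem5} with $K=L=[R,R]$: since $[[R,R],[R,R]]\subseteq Z(R)$ and neither copy is central, we must be in the exceptional case. That already gives $\operatorname{char}R=2$ and $\dim_C RC=4$, which is exactly the conclusion. So the ``only if'' direction is basically immediate from Lemma \ref{lem5} once one observes that the hypothesis $[[R,R],[R,R]]\ne0$ rules out $[R,R]\subseteq Z(R)$.

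Putting this together, the skeleton is: (1) Note $[R,R]$ is a Lie ideal; the nonvanishing hypothesis makes it noncentral. (2) For ``only if,'' feed $K=L=[R,R]$ into Lemma \ref{lem5} (Lanski--Montgomery) to land in the exceptional case. (3) For ``if,'' reduce via the PI-structure remark preceding the lemma to $RC$ being finite-dimensional central simple of dimension $4$ over $C$ in characteristic $2$, and verify by explicit $2\times2$ matrix calculation (or cite \cite[Lemma 2.3]{lee2022}) that $0\ne[[RC,RC],[RC,RC]]\subseteq C$, hence $0\ne[[R,R],[R,R]]\subseteq Z(R)$ after intersecting back with $R$ and using that $Z(RC)\cap R\subseteq Z(R)$.

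The main obstacle I anticipate is purely bookkeeping in the ``if'' direction: one must be careful that passing from $R$ to $RC$ does not create or destroy commutators of commutators, i.e. that $[R,R]C=[RC,RC]$ and that a nonzero element of $[[RC,RC],[RC,RC]]\subseteq C$ can be scaled into $R$ so that $[[R,R],[R,R]]$ is genuinely nonzero (not just nonzero after extension of scalars); primeness of $R$ and the fact that $C$ is the quotient field of $Z(R)$ in the PI case handle this, but it needs to be said cleanly. Everything else is either a citation or a short $M_2$ computation in characteristic $2$, so the argument should be quite short.
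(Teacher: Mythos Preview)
Your proposal is correct, and your ``only if'' direction via Lemma~\ref{lem5} is actually more direct than what the paper does. The paper's treatment of Lemma~\ref{lem7} is essentially just the sentence preceding it: it observes that a prime ring satisfying the relevant hypothesis is PI (since $[[R,R],[R,R]]\subseteq Z(R)$ amounts to the identity $\big[[[x,y],[z,w]],v\big]=0$), passes to the finite-dimensional central simple algebra $RC$, and then defers both directions to \cite[Lemma~2.3]{lee2022} for simple rings. Your route for ``only if'' bypasses the PI reduction entirely by feeding $K=L=[R,R]$ into Lanski--Montgomery; this is cleaner and keeps the argument inside the paper's own toolkit. Your ``if'' direction is the same as the paper's in spirit (pass to $RC$ and compute/cite), and the bookkeeping you flag---$[RC,RC]=C[R,R]$ and $C\cap R\subseteq Z(R)$---is exactly what is needed to transport the conclusion back to $R$.

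One small correction: a $4$-dimensional central simple $C$-algebra is either $M_2(C)$ or a quaternion division algebra over $C$, not $M_2(F)$ for a quadratic extension $F/C$ (that would be $8$-dimensional over $C$). What you presumably mean is that $RC\otimes_C \overline{C}\cong M_2(\overline{C})$, which suffices for the commutator computation; alternatively, just cite \cite[Lemma~2.3]{lee2022} as you suggest and avoid the explicit matrix model altogether.
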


\begin{lem} \label{lem11}
Let $R$ be a exceptional prime ring, $a, b\in R$. If $0\ne [a, b]\in Z(R)$, then $a, b\in [RC, RC]$.
\end{lem}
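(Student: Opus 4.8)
The plan is to sidestep any classification of four-dimensional central simple algebras and instead read off the conclusion from a single commutator identity. Put $\lambda:=[a,b]$. By hypothesis $0\ne\lambda\in Z(R)$, and since $R$ is prime we have $Z(R)\subseteq C$; hence $\lambda$ is a nonzero element of the field $C$, so it is invertible in $C$, while of course $\lambda$ is central in $RC$. The one thing I would record first is the Leibniz rule for additive commutators, $[x,yz]=[x,y]z+y[x,z]$, which specialises to
$$[a,ab]=[a,a]\,b+a[a,b]=a[a,b]\qquad\text{and}\qquad [b,ba]=[b,b]\,a+b[b,a]=b[b,a].$$

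Next I would use that $[RC,RC]$ is a $C$-subspace of $RC$: indeed $c[x,y]=[cx,y]\in[RC,RC]$ for $c\in C$ and $x,y\in RC$, and $[RC,RC]$ is an additive group by definition. Now $[a,ab],[b,ba]\in[R,R]\subseteq[RC,RC]$, and the identities above give $[a,ab]=a\lambda=\lambda a$ and $[b,ba]=b(-\lambda)=-\lambda b$ in $RC$; multiplying through by $\lambda^{-1}\in C$ yields
$$a=\lambda^{-1}[a,ab]\in[RC,RC],\qquad b=-\lambda^{-1}[b,ba]\in[RC,RC],$$
which is exactly the assertion. (Since $\text{\rm char}\,R=2$ the sign in the second equality is in any case immaterial.)

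A remark I would attach: this argument does not seem to use that $R$ is exceptional — it shows that in \emph{any} prime ring, $0\ne[a,b]\in Z(R)$ forces $a,b\in[RC,RC]$ — but the lemma is phrased in the form that will be convenient in the following sections. Should one instead prefer a proof that leans on the exceptional hypothesis, one can argue structurally: $RC$ is then a $4$-dimensional central simple $C$-algebra of characteristic $2$, hence $RC\cong M_2(C)$ or $RC$ is a quaternion division algebra; from $\lambda\in C\setminus\{0\}$ and characteristic $2$ one gets $[a^2,b]=0=[b^2,a]$, so $a^2$ lies in the centraliser of the noncentral element $b$, namely $C+Cb$; the $Cb$-component must vanish (otherwise $[a,b]=0$), so $a^2\in C$, and then the reduced characteristic polynomial of $a$ is $t^2-a^2$, forcing the reduced trace of $a$ to vanish, i.e.\ $a\in[RC,RC]$, and symmetrically for $b$. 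I do not expect a genuine obstacle in either route: the only two points to get right are that $[RC,RC]$ is a $C$-subspace and that a nonzero central commutator is invertible in the field $C$; the real pitfall would be to reach for structure theory when the Leibniz identity already closes the argument.
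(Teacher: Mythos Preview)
Your proof is correct and takes a genuinely different, more elementary route than the paper's. The paper argues by contradiction using the structure of the $4$-dimensional central simple algebra $RC$: every $x\in RC$ satisfies a quadratic relation $x^2=\alpha x+\beta$ with $\alpha,\beta\in C$, and $x\in[RC,RC]$ iff $\alpha=0$; assuming $a\notin[RC,RC]$ one has $a^2=\mu_1 a+\mu_2$ with $\mu_1\ne 0$, and then (using $\text{char}\,R=2$) $0=[a,[a,b]]=[a^2,b]=\mu_1[a,b]$, forcing $[a,b]=0$. Your argument bypasses this structure entirely: from $[a,ab]=a[a,b]=\lambda a$ and the fact that $[RC,RC]$ is a $C$-subspace you recover $a=\lambda^{-1}[a,ab]\in[RC,RC]$ directly. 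As you correctly observe, your proof does not use the exceptional hypothesis at all and therefore establishes the stronger statement valid in every prime ring; the paper's argument, by contrast, genuinely uses both $\dim_C RC=4$ (for the quadratic relation) and $\text{char}\,R=2$ (for $[a^2,b]=0$). What the paper's approach buys is a link to the trace-zero description of $[RC,RC]$ that recurs later; what yours buys is generality and complete independence from structure theory.
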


\begin{proof}
Note that $RC$ is a $4$-dimensional central simple algebra. Given $x\in RC$, $x^2=\alpha x+\beta$ for some $\alpha, \beta\in C$.
It is known that $x\in [RC, RC]$ iff $\alpha=0$. Suppose on the contrary that $a\notin [RC, RC]$. Then $a^2=\mu_1a+\mu_2$ for some $\mu_1, \mu_2\in C$ with $\mu_1\ne 0$.
Then
$$
0=[a, [a, b]]=[a^2, b]=[\mu_1a+\mu_2, b]=\mu_1[a, b]
$$
and so $[a, b]=0$, a contradiction. Similarly, we have $b\in  [RC, RC]$.
\end{proof}

\section{Semiprime rings}
Throughout this section, {\it $R$ is always a semiprime ring with Lie ideals $K, L$}. We begin with a key lemma.\vskip6pt

\begin{lem}\label{lem6}
Let $a\in R$ and $m$ a positive integer.
If $[a, L^m]=0$, then $[a, L]=0$.
\end{lem}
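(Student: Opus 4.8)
The plan is to reduce to the prime case and then to distinguish whether $L$ is abelian. \emph{Reduction.} A semiprime ring is a subdirect product of prime rings $R/P$, where $P$ runs over a family of prime ideals with $\bigcap P=0$; since homomorphic images of Lie ideals are Lie ideals and $L^m$ maps onto the $m$-th power of the image of $L$, the hypothesis $[a,L^m]=0$ descends to every $R/P$, while $[a,L]\subseteq\bigcap P=0$ as soon as $[a,L]$ vanishes in each $R/P$. So it suffices to treat prime $R$; we may also assume $m\geq2$ and $L\not\subseteq Z(R)$.

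\emph{Case 1: $L$ is not abelian.} By Theorem~\ref{thm3} and Lemma~\ref{lem1}, the ideal $I:=\widetilde R[L,L^2]\widetilde R$ is nonzero with $I\subseteq L^2$, so for each $k\geq1$ the ideal $I^k$ is nonzero (primeness) and $I^k\subseteq L^{2k}$. If $m=2k$, then $I^k$ is a nonzero ideal lying in $L^m$, so $[a,I^k]=0$; expanding $[a,xr]=[a,x]r+x[a,r]$ for $x\in I^k$, $r\in R$ gives $I^k[a,R]=0$, and primeness forces $a\in Z(R)$, hence $[a,L]=0$. If $m=2k+1$ with $k\geq1$, set $J:=I^k$; then $J$ is a nonzero ideal contained in $L^{m-1}$, so $JL\subseteq L^m$ and, $J$ being an ideal, $jsx\in JL\subseteq L^m$ for all $j\in J$, $s\in R$, $x\in L$. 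From $[a,jx]=0$ we get $[a,j]\,x=-j\,[a,x]$, and from $[a,jsx]=0$ we get $[a,j]\,sx=-j\,[a,s]\,x-js\,[a,x]$. Substituting $x\mapsto[x,s]\in L$ in the first identity and expanding its left side using these two identities and its right side via $[a,[x,s]]=[[a,x],s]+[x,[a,s]]$, everything cancels except $j\,x\,[a,s]$; hence $JL\,[a,R]=0$. Since $JL\neq0$ by primeness and $[a,rs]=[a,r]s+r[a,s]$, this gives $(JL)\,R\,[a,R]=0$, so $[a,R]=0$ and $[a,L]=0$.

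\emph{Case 2: $L$ is abelian and noncentral.} Then $[L,L]=0\subseteq Z(R)$, so Lemma~\ref{lem5} (with $K=L$) shows $R$ is exceptional, i.e. $\operatorname{char}R=2$ and $RC$ is a $4$-dimensional central simple $C$-algebra. Moreover $\overline L$ is commutative, being generated by the pairwise commuting set $L$, so $\overline L C=C[x_0]=C+Cx_0$ for every noncentral $x_0\in L$ — a maximal commutative subalgebra of $RC$, of $C$-dimension $2$ — and $L^m\subseteq\overline L$ is again an abelian Lie ideal. I would first check that $L^m$ is noncentral: otherwise $x^{m-1}[x,R]\subseteq L^{m-1}L=L^m\subseteq Z(R)$ for all $x\in L$, so Lemma~\ref{lem8}(iii) gives $x^{m-1}=0$ for each noncentral $x\in L$; this is absurd for $m=2$, and for $m\geq3$ it forces every noncentral element of $L$ to be nilpotent of square zero, contradicting the fact that $[x_0,R]$ spans $\overline L C=C+Cx_0$ over $C$ (by Lemma~\ref{lem14}, $\dim_C[x_0,RC]\geq2$) and hence must contain a non-nilpotent noncentral element. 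Now pick a noncentral $z\in L^m$. Its centralizer in $RC$ is $C[z]=C+Cz$, which is $2$-dimensional, commutative, and contains both $a$ (since $[a,z]=0$) and $L$ (since $z\in L^m\subseteq\overline L$ commutes with everything in the commutative ring $\overline L\supseteq L$). Therefore $[a,L]=0$.

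The principal obstacle is the odd-$m$ subcase of Case~1: one cannot descend from $L^m$ to $L$ by a formal induction on $m$, and the explicit commutator-substitution identity seems unavoidable; one must also be careful that $JL$ is a nonzero \emph{left} ideal so that the primeness argument applies. Case~2 is less deep but technical, the delicate point being the verification that $L^m$ remains noncentral when $R$ is exceptional.
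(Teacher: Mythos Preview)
Your proof is correct, but it takes a considerably longer route than the paper's.  The paper does not split according to whether $L$ is abelian; instead it proceeds by a direct induction on $m$ using only Lemma~\ref{lem8}(iii).  The key observation you did not exploit is that $[L^m,R]\subseteq L\cap L^m$ (this is Eq.~\eqref{eq:6}), so $L^{m-1}[L^m,R]\subseteq L^m$.  From $[a,L^m]=0$ one then gets both $[a,L^{m-1}[L^m,R]]=0$ and $[a,[L^m,R]]=0$, hence $[a,L^{m-1}]\,[L^m,R]=0$; Lemma~\ref{lem8}(iii) now gives $[a,L^{m-1}]=0$ or $L^m\subseteq Z(R)$, and in the latter case a second application of Lemma~\ref{lem8}(iii) to $L^{m-1}[L,R]\subseteq Z(R)$ again yields $[a,L^{m-1}]=0$.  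Induction finishes the argument in a few lines.

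By contrast, your Case~1 relies on Theorem~\ref{thm3} to locate an ideal inside $L^2$, then needs a parity distinction and, for odd $m$, a delicate commutator identity to manufacture $JL[a,R]=0$.  Your Case~2 invokes the full structure of the $4$-dimensional central simple algebra $RC$ (centralizers of noncentral elements are $2$-dimensional, nilpotents square to zero), which the paper only develops \emph{later} (Lemma~\ref{lem9}, Lemma~\ref{lem131}) and never needs here.  Both approaches are valid, but the paper's is strictly more economical and self-contained: it uses nothing beyond Lemma~\ref{lem8} and avoids the abelian/non-abelian dichotomy entirely.  What your approach buys is perhaps a more concrete picture of why the abelian case is harmless---you effectively reprove parts of Lemma~\ref{lem131} and Lemma~\ref{lem9} in passing---but at the cost of significant extra work.
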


\begin{proof}
Since every semiprime ring is a subdirect product of prime rings, we may assume that $R$ is a prime ring.
Clearly, we may assume that $m>1$ and proceed the proof by induction on $m$. Since $L^m$ is a Lie ideal of $R$, we get
$[L^m, R]\subseteq L^m$ and $[L^m, R]\subseteq L$ by Eq.\eqref{eq:6}.
Therefore,
$$
\big[a, L^{m-1}[L^m, R]\big]\subseteq [a, L^m]=0\ \text{\rm and}\ \big[a, [L^m, R]\big]\subseteq [a, L^m]=0.
$$
Hence $[a, L^{m-1}][L^m, R]=0$. By  Lemma \ref{lem8} (iii), either $[a, L^{m-1}]=0$ or $L^m\subseteq Z(R)$.

We first consider the latter case. Then $L^{m-1}[L, R]\subseteq L^m\subseteq Z(R)$.
By Lemma \ref{lem8} (iii) again, either $L^{m-1}=0$  or $[L, R]=0$.
In either case, we always have $[a, L^{m-1}]=0$. By induction on $m$, we conclude that $[a, L]=0$, as desired.
\end{proof}

\begin{cor}\label{cor1}
If $L^m\subseteq Z(R)$ where $m$ is a positive integer, then $L\subseteq Z(R)$.
\end{cor}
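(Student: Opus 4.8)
\ The plan is to obtain this as an immediate consequence of Lemma \ref{lem6}. The hypothesis $L^m\subseteq Z(R)$ says precisely that every element of $L^m$ is central, so for each fixed element $a\in R$ we automatically have $[a, L^m]=0$. Applying Lemma \ref{lem6} to this $a$ gives $[a, L]=0$. Since $a\in R$ was arbitrary, this means $[R, L]=0$, i.e.\ $L\subseteq Z(R)$, which is the assertion. The case $m=1$ is vacuous, so one may freely assume $m>1$, matching the setting of Lemma \ref{lem6}.

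The point worth flagging is that there is essentially no new obstacle to overcome: all the substantive work (the induction on $m$, the appeal to the primeness-type cancellation in Lemma \ref{lem8}(iii), and the reduction from the semiprime to the prime case via subdirect products) has already been carried out inside the proof of Lemma \ref{lem6}. So the corollary is genuinely a one-line deduction, and the only thing to be careful about is the logical reading of ``$L^m\subseteq Z(R)$'' — it is a statement about \emph{all} generators and sums of products of length $m$ from $L$, and it is exactly this that makes $[a, L^m]=0$ hold for every $a\in R$, which is the single hypothesis Lemma \ref{lem6} requires.
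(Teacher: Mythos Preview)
Your argument is correct and matches the paper's approach exactly: the paper also presents this as an immediate consequence of Lemma~\ref{lem6}, applied with arbitrary $a\in R$. There is nothing to add.
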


\begin{cor}\label{cor2}
If $[K^m, L^n]=0$ for some positive integers $m, n$, then $[K, L]=0$.
\end{cor}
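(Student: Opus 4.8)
The plan is to obtain this as an immediate two-step application of Lemma~\ref{lem6}, with no further appeal to semiprimeness, since the reduction to prime rings has already been carried out inside the proof of that lemma. First I would unwind the hypothesis: by definition $[K^m, L^n]$ is the additive subgroup generated by the commutators $[a, b]$ with $a\in K^m$ and $b\in L^n$, so $[K^m, L^n]=0$ is equivalent to saying that $[a, L^n]=0$ for every single element $a\in K^m$. Fixing such an $a$ and applying Lemma~\ref{lem6} to the Lie ideal $L$ with exponent $n$ gives $[a, L]=0$; since $a\in K^m$ was arbitrary, this yields $[K^m, L]=0$.

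Next I would repeat the argument with the roles of $K$ and $L$ interchanged. The relation $[K^m, L]=0$ says that $[b, K^m]=0$ for every $b\in L$. Fixing $b\in L$ and applying Lemma~\ref{lem6} to the Lie ideal $K$ with exponent $m$, we get $[b, K]=0$; letting $b$ range over $L$ then gives $[L, K]=0$, that is, $[K, L]=0$, as desired. Note that the hypotheses that $K$ and $L$ are Lie ideals are each used exactly once, one per step.

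I do not anticipate a genuine obstacle here: all the substantive work (the subdirect reduction to the prime case and the use of Lemma~\ref{lem8}(iii)) is already contained in Lemma~\ref{lem6}, and what remains is only the routine identification of ``$[A, B]=0$'' with ``$[a, B]=0$ for every $a\in A$'', together with the two invocations of that lemma applied in turn to the pairs $(L, n)$ and $(K, m)$.
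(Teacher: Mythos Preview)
Your proposal is correct and follows essentially the same two-step application of Lemma~\ref{lem6} as the paper's own proof: first reduce $[K^m,L^n]=0$ to $[K^m,L]=0$ by fixing $a\in K^m$, then reduce to $[K,L]=0$ by fixing $b\in L$. (Incidentally, the paper's printed proof has a harmless typo, writing $L^m$ where $L^n$ is meant; your version states it correctly.)
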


\begin{proof}
Let $a\in K^m$. By assumption, $[a, L^m]=0$ and so $[a, L]=0$ (see Lemma \ref{lem6}). That is, $[K^m, L]=0$.
Applying the same argument, we get $[K, L]=0$.
\end{proof}

We refer the reader to \cite[Proposition 2.1]{gardella2024} for $K=L$, $m=1$ and $n=2$.

\begin{cor}\label{cor3}
If $\big[[R, R]^m, [R, R]^n\big]=0$ where $m, n$ are two positive integers, then $R$
is commutative.
\end{cor}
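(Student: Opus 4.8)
The plan is to deduce the statement directly from Corollary~\ref{cor2}, together with the fact (Lemma~\ref{lem8}(ii)) that in a semiprime ring an abelian Lie ideal is central, and a short commutator computation. First I would observe that $[R,R]$ is itself a Lie ideal of $R$, since $\big[[R,R],R\big]\subseteq [R,R]$. Applying Corollary~\ref{cor2} with $K=L=[R,R]$ and the hypothesis $\big[[R,R]^m,[R,R]^n\big]=0$, I obtain $\big[[R,R],[R,R]\big]=0$; that is, $[R,R]$ is an abelian Lie ideal of $R$.

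Next I would upgrade ``abelian'' to ``central''. For every $a\in [R,R]$, the relation $\big[[R,R],[R,R]\big]=0$ gives $\big[a,[R,R]\big]=0$, so Lemma~\ref{lem8}(ii) yields $a\in Z(R)$. Hence $[R,R]\subseteq Z(R)$.

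It then remains to show that a semiprime ring $R$ with $[R,R]\subseteq Z(R)$ is commutative. Fix $x,y\in R$. Since $[y,x]\in Z(R)$, a direct expansion gives $[xy,x]=x[y,x]=[y,x]x$, and this element lies in $[R,R]\subseteq Z(R)$. Commuting it with an arbitrary $r\in R$ and using that $[y,x]$ is central yields $[y,x][x,r]=0$ for all $r\in R$. Taking $r=y$ gives $[x,y]^2=0$. As $[x,y]\in Z(R)$, we get $[x,y]\,R\,[x,y]=[x,y]^2R=0$, so semiprimeness forces $[x,y]=0$. Therefore $R$ is commutative.

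I do not expect a genuine obstacle here: all the real work is already packaged in Lemma~\ref{lem6} (hence Corollary~\ref{cor2}) and in Lemma~\ref{lem8}(ii), so the corollary is essentially an assembly of those facts. The only computational point is the last paragraph, and even that can be bypassed by writing $R$ as a subdirect product of prime rings and noting that in a prime ring a nonzero element of $Z(R)$ cannot square to zero, so $[x,y]^2=0$ with $[x,y]$ central gives $[x,y]=0$ componentwise.
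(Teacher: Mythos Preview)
Your proof is correct and takes essentially the same approach as the paper: apply Corollary~\ref{cor2} to reduce to $\big[[R,R],[R,R]\big]=0$, then use Lemma~\ref{lem8}(ii). The paper dispatches the final step more quickly than your explicit commutator computation: once $[R,R]\subseteq Z(R)$, one has $[a,[R,R]]=0$ for every $a\in R$, so a second application of Lemma~\ref{lem8}(ii) immediately gives $R=Z(R)$.
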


\begin{proof}
In view of Corollary \ref{cor2}, $\big[[R, R], [R, R]\big]=0$ and so $R$ is commutative (see Lemma \ref{lem8} (ii)).
\end{proof}

See also \cite[Theorem 2.3]{gardella2024} for the case $m=1$ and $n=2$.
The next theorem extends Theorem \ref{thm3} to a more general form.

\begin{thm}\label{thm8}
Suppose that $L$ is not abelian. Then $L^m$ contains a nonzero ideal of $R$ for any positive integer $m>1$.
Precisely, we have $0\ne \widetilde{R}[L^{m-1}, L^m]\widetilde{R} \subseteq L^m$.
\end{thm}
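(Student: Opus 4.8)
The plan is to observe that almost all of the statement is already in place, so that only one nonvanishing fact needs to be verified. By Lemma~\ref{lem8}(v) we have $\widetilde{R}[L^{m-1}, L^m]\widetilde{R}\subseteq L^m$, and by definition $\widetilde{R}[L^{m-1}, L^m]\widetilde{R}$ is the ideal of $R$ generated by $[L^{m-1}, L^m]$. Since $\widetilde R$ is unital, this ideal contains $[L^{m-1}, L^m]$ itself, so it is nonzero precisely when $[L^{m-1}, L^m]\ne 0$. Thus the whole theorem reduces to proving that $[L^{m-1}, L^m]\ne 0$ under the standing hypotheses that $L$ is not abelian and $m>1$.

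I would establish this nonvanishing by contradiction. Suppose $[L^{m-1}, L^m]=0$. Because $m>1$, the exponent $m-1$ is a positive integer, and the identity has exactly the shape $[K^p, K^q]=0$ for the Lie ideal $K:=L$ with positive integers $p=m-1$ and $q=m$. Corollary~\ref{cor2} (applied with both Lie ideals equal to $L$) then yields $[L, L]=0$, i.e. $L$ is abelian, contradicting the hypothesis. Hence $[L^{m-1}, L^m]\ne 0$, and therefore $0\ne\widetilde{R}[L^{m-1}, L^m]\widetilde{R}\subseteq L^m$ exhibits a nonzero ideal of $R$ contained in $L^m$, which is the assertion. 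Equivalently, one can bypass Corollary~\ref{cor2} and argue directly: $[L^{m-1}, L^m]=0$ gives $[a, L^m]=0$ for every $a\in L^{m-1}$, so $[a, L]=0$ by Lemma~\ref{lem6}; thus $[L^{m-1}, L]=0$, and a further application of the same principle forces $[L, L]=0$.

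I do not anticipate a real obstacle here: the delicate work, namely the reduction from semiprime to prime rings and the centralizer manipulations, has already been absorbed into Lemma~\ref{lem6} and Corollary~\ref{cor2}, so what remains is purely formal bookkeeping. The single point that must not be overlooked is that the exponents passed to Corollary~\ref{cor2} have to be positive, and this is exactly where the hypothesis $m>1$ enters; for $m=1$ there is no slack to exploit, and the statement is (correctly) not claimed in that case.
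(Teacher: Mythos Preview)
Your proof is correct and matches the paper's own argument almost verbatim: the paper likewise invokes Corollary~\ref{cor2} (with $K=L$) to get $[L^{m-1},L^m]\ne 0$ and then Lemma~\ref{lem8}(v) for the inclusion. Nothing needs to be added.
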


\begin{proof}
Since $L$ is not abelian, the semiprimeness of $R$ implies that $[L^{m-1}, L^m]\ne0$ (see Corollary \ref{cor2}) and hence by Lemma \ref{lem8} (v),
$0\ne \widetilde{R}[L^{m-1}, L^m]\widetilde{R} \subseteq L^m$, as desired.
\end{proof}

\section{Exceptional prime rings}

In this section we focus on the Lie ideal structure of exceptional prime rings (see Theorem \ref{thm19})
and then give the proof of Theorem A (i.e., Theorem \ref{thm20}).
We begin with characterizing noncentral abelian Lie ideals of prime rings.

\begin{lem}\label{lem131}
Let $R$ be a prime ring with a noncentral Lie ideal $L$. Then the following are equivalent:

(i)\ $L$ is abelian;

(ii)\ $R$ is exceptional and $LC=[a, RC]=Ca+C$ for any $a\in L\setminus Z(R)$;

(iii)\ $\dim_CLC=2$.
\end{lem}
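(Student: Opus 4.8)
\emph{Plan of proof.}\ Since conditions $(ii)$ and $(iii)$ are phrased in terms of $RC$ and $C$-dimensions, I would work throughout with the Lie ideal $LC$ of the prime ring $RC$, using repeatedly that $[a,RC]\subseteq LC$ for every $a\in L$ (because $L$ is a Lie ideal, $[a,r]\in L$ for $r\in R$, and then one takes $C$-combinations), that $[L,L]=0$ forces $[LC,LC]=0$ (expand $[l_1c_1,l_2c_2]=[l_1,l_2]c_1c_2$), and that any $a\in L\setminus Z(R)$ satisfies $a\notin C$ (an element of $R$ lying in $C$ would be central). I would prove the cycle $(i)\Rightarrow(ii)\Rightarrow(iii)\Rightarrow(i)$. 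The implication $(ii)\Rightarrow(iii)$ is immediate: choosing $a\in L\setminus Z(R)$ (possible since $L$ is noncentral), the set $\{1,a\}$ is $C$-independent, so $\dim_C LC=\dim_C(Ca+C)=2$.

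For $(i)\Rightarrow(ii)$, assume $L$ is abelian and noncentral. Applying Lemma~\ref{lem5} with $K=L$ to $[L,L]=0\subseteq Z(R)$ shows that $L$ is central or $R$ is exceptional; since $L$ is noncentral, $R$ is exceptional, so $\operatorname{char}R=2$ and $\dim_C RC=4$. Fix any $a\in L\setminus Z(R)\subseteq RC\setminus C$ and let $D=\{x\in RC:xa=ax\}$ be its centralizer. The inner derivation $[a,-]$ is a $C$-linear endomorphism of the $4$-dimensional space $RC$ with kernel $D$ and image $[a,RC]$, so $\dim_C D+\dim_C[a,RC]=4$. On the one hand $D\supseteq Ca+C$, which has dimension $2$; on the other hand $\dim_C[a,RC]\ge2$ by Lemma~\ref{lem14}. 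Hence both dimensions equal $2$, so $D=Ca+C$. Finally, $LC$ is abelian, so every element of $LC$ commutes with $a$, i.e.\ $LC\subseteq D=Ca+C$; since also $LC\supseteq[a,RC]$ of dimension $2$, we conclude $LC=[a,RC]=Ca+C$, which is $(ii)$.

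For $(iii)\Rightarrow(i)$, assume $\dim_C LC=2$ and fix $a\in L\setminus Z(R)\subseteq RC\setminus C$. Then $[a,RC]\subseteq LC$ and $\dim_C[a,RC]\ge2$ by Lemma~\ref{lem14}, so $[a,RC]=LC$; write $LC=Ca+Cb$, whence $[LC,LC]=C[a,b]$. If $[a,b]\ne0$, then $C[a,b]=[LC,LC]$ is a $1$-dimensional Lie ideal of $RC$, so $\dim_C\big[[a,b],RC\big]\le1$, and Lemma~\ref{lem14} forces $[a,b]\in C$; but then $C=C[a,b]=[LC,LC]\subseteq LC$, so $LC=C+Ca$ (as $a\notin C$ and $\dim_C LC=2$), which is commutative, contradicting $[a,b]\ne0$. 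Therefore $[a,b]=0$, so $[L,L]\subseteq[LC,LC]=0$ and $L$ is abelian.

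I do not anticipate a serious obstacle once Lemma~\ref{lem5} is available: its single application supplies the only ring-theoretic input---that $R$ is exceptional---after which everything reduces to linear algebra inside the $4$-dimensional algebra $RC$, the decisive device being the rank--nullity identity for $[a,-]$ together with the trivial inclusion of $Ca+C$ in the centralizer of $a$. What does need care is routine bookkeeping: checking that the passage $L\mapsto LC$ preserves being a Lie ideal and being abelian, that $[a,RC]$ is exactly the image of the $C$-linear map $[a,-]$, and---in $(iii)\Rightarrow(i)$---recognizing a nonzero $[LC,LC]$ as a $1$-dimensional Lie ideal, so that Lemma~\ref{lem14} applies to its generator and pins it inside $C$.
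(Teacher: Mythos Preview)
Your argument is correct, and in both non-trivial implications it takes a cleaner route than the paper's.

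For $(i)\Rightarrow(ii)$ the paper does not use the centralizer at all: it first argues by contradiction that $\dim_C LC\le 2$ (if $\dim_C LC\ge 3$ then $RC=LC+Cb$, so Lemma~\ref{lem4} gives $[RC,RC]\subseteq LC$ and hence $\big[[RC,RC],[RC,RC]\big]=0$, forcing $RC$ commutative), and then uses Lemma~\ref{lem7} to produce $r\in RC$ with $[a,r]=1$, whence $\beta a+\gamma=[a,(\beta a+\gamma)r]\in[a,RC]$ shows $Ca+C\subseteq[a,RC]$. Your rank--nullity argument for $[a,-]$ on the $4$-dimensional space $RC$ replaces both steps at once: the sandwich $[a,RC]\subseteq LC\subseteq\mathfrak C_{RC}(a)=Ca+C$ with the two ends forced to dimension~$2$ gives everything simultaneously, and avoids Lemmas~\ref{lem4} and~\ref{lem7}.

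For $(iii)\Rightarrow(i)$ the paper first passes through ``$R$ is exceptional'': from $[LC,LC]=C[a,b]$ it deduces $\big[[L,L],[L,L]\big]=0$, invokes Lemma~\ref{lem5}, and only then uses Lemma~\ref{lem7} to obtain $0\ne[a,[RC,RC]]\subseteq C$, which yields $Ca+C\subseteq LC$. You bypass exceptionality entirely by applying Lemma~\ref{lem14} a second time, to the generator $[a,b]$ of the $1$-dimensional Lie ideal $[LC,LC]$; this pins $[a,b]\in C$ directly and the contradiction follows. The trade-off is that the paper's route records along the way that $R$ is exceptional (which it needs anyway for $(ii)$), whereas your $(iii)\Rightarrow(i)$ is a self-contained linear-algebra argument using only Lemma~\ref{lem14}.
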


\begin{proof}
The implication (ii) $\Rightarrow$ (i) is clear.
We now prove that (i) implies (ii).
Since $L$ is abelian, it follows from Lemma \ref{lem5} that $R$ is exceptional.
Let $a\in L\setminus Z(R)$. Then
$[a, RC]\subseteq LC$.
By Lemma \ref{lem14}, it follows that $\dim_C[a, RC]>1$.
We claim that $LC= [a, RC]$. Otherwise, $\dim_CLC\geq 3$. In this case, $RC=LC+Cb$ for some $b\in R$.
In view of Lemma \ref{lem4}, we have
$[RC, RC]\subseteq LC$.
Hence
$$
\big[[RC, RC], [RC, RC]\big]\subseteq [LC, LC]=0.
$$
By Lemma \ref{lem8} (ii), $[RC, RC]\subseteq C$ and so $\big[[RC, RC], RC\big]=0$. So $RC$ is commutative, a contradiction.
Therefore, $LC=[a, RC]$ and $\dim_CLC=\dim_C[a, RC]=2$.

We claim that $LC=[a, RC]=Ca+C$. Indeed, since $a\notin C$ and $a\in LC=[a, RC]$, it follows from Lemma \ref{lem8} (ii) and Lemma \ref{lem7} that
$$
0\ne [a, [RC, RC]]\subseteq \big[[RC, RC], [RC, RC]\big]\subseteq C.
$$
 Thus $[a, r]=1$ for some $r\in RC$.
Given $\beta, \gamma\in C$, we have
$$
\beta a+\gamma =[a, (\beta a+\gamma)r]\in [a, RC]=LC,
$$
implying that $LC=[a, RC]=Ca+C$ since $\dim_CLC=\dim_C[a, RC]=2$, as claimed. Hence (ii) is proved.

Clearly, (ii) implies (iii). Finally, we prove that (iii) implies (i).
Assume that (iii) holds, that is, $\dim_CLC=2$.
Choose a noncentral element $a\in L$. By Lemma \ref{lem14}, $LC=[a, RC]$. Thus
$[a, r]=a$
for some $r\in RC$. Write $LC=Ca+Cb$ for some $b\in L$. Then $[LC, LC]=C[a, b]$ and so $\big[[L, L], [L, L]\big]=0$. In view of Lemma \ref{lem5},  $R$ is exceptional.

It follows from Lemma \ref{lem7} that
$$
0\ne [a, [RC, RC]]\subseteq LC\cap \big[[RC, RC], [RC, RC]\big]\subseteq C.
$$
 Hence $Ca+C\subseteq [a, RC]=LC$. This implies that
$LC=Ca+C$ and so $L$ is abelian, as desired.
\end{proof}

\begin{cor}\label{cor4}
Let $R$ be a prime ring with a noncentral Lie ideal $L$, and $k>1$ a positive integer.
Then $L^k$ contains a nonzero ideal of $R$ except when $LC=Ca+C$ for any $a\in L\setminus Z(R)$.
\end{cor}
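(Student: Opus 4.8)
The plan is to split on whether the Lie ideal $L$ is abelian, after using Lemma \ref{lem131} to recognise the exceptional clause of the statement. By Lemma \ref{lem131}, for a noncentral Lie ideal of a prime ring the conditions ``$L$ abelian'', ``$R$ exceptional and $LC = Ca + C$ for every noncentral $a \in L$'', and ``$\dim_C LC = 2$'' are all equivalent; since a noncentral Lie ideal contains a noncentral element, the clause ``$LC = Ca + C$ for any $a \in L \setminus Z(R)$'' is precisely the assertion that $L$ is abelian. So it suffices to prove (a) if $L$ is not abelian then $L^k$ contains a nonzero ideal of $R$, and (b) if $L$ is abelian then $L^k$ contains no nonzero ideal of $R$. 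Part (a) is immediate: a prime ring is semiprime, so Theorem \ref{thm8} (with $k > 1$) applies and yields $0 \ne \widetilde{R}[L^{k-1}, L^k]\widetilde{R} \subseteq L^k$, a nonzero ideal of $R$ sitting inside $L^k$.

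For part (b), suppose $L$ is abelian. Then $R$ is exceptional, so $RC$ is a $4$-dimensional central simple $C$-algebra, and $LC = Ca + C$ for a fixed noncentral $a \in L$, a $2$-dimensional $C$-subspace of $RC$ containing $1$. The key point is that $LC$ is then a unital subalgebra of $RC$: every element of a $4$-dimensional central simple algebra satisfies a monic quadratic over its centre, so $a^2 \in Ca + C = LC$, and hence $(Ca + C)(Ca + C) \subseteq Ca^2 + Ca + C = LC$. Consequently $(LC)^k = LC$, and since $L \subseteq LC$ and $C$ is central we get $L^k C \subseteq (LC)^k = LC$. If $L^k$ contained a nonzero ideal $I$ of $R$, then $IC$ would be a nonzero ideal of the simple ring $RC$, forcing $RC = IC \subseteq L^k C \subseteq LC$, which contradicts $\dim_C LC = 2 < 4 = \dim_C RC$. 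Hence $L^k$ contains no nonzero ideal of $R$, which finishes (b) and the proof.

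I do not expect a genuine obstacle here, since everything follows quickly from Lemma \ref{lem131} and Theorem \ref{thm8}. The only steps needing a little care are in part (b): checking that $LC$ is closed under multiplication (via the reduced-degree-two property of a $4$-dimensional central simple algebra, already used in the proof of Lemma \ref{lem11}), the bookkeeping $L^k C \subseteq (LC)^k = LC$, and the routine facts that extending a nonzero ideal of $R$ by $C$ produces a nonzero ideal of $RC$ and that $RC$ is simple when $R$ is exceptional.
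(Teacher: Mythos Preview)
Your proof is correct. For direction (a) you do exactly what the paper does: invoke Lemma~\ref{lem131} to identify the exceptional clause with ``$L$ abelian'' and then apply Theorem~\ref{thm8}. The paper's own proof stops there, treating the corollary as a one-directional statement (``$L^k$ contains a nonzero ideal unless possibly $L$ is abelian''), and indeed only the contrapositive of this direction is ever used later. Your part (b) establishes the converse---that in the abelian case $L^k$ genuinely fails to contain a nonzero ideal---via the observation that $LC=Ca+C$ is a $2$-dimensional unital subalgebra of the $4$-dimensional simple algebra $RC$. This extra argument is sound and makes the ``except when'' clause sharp, which the paper's terse proof does not explicitly do.
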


\begin{proof}
In view of Lemma \ref{lem131}, $LC=Ca+C$ for any $a\in L\setminus Z(R)$ iff $L$ is abelian.
The corollary follows directly from Theorem \ref{thm8}.
\end{proof}

\begin{lem}\label{lem135}
If $a$ is not central in a prime ring $R$, then $Ca+C$ is a Lie ideal of $RC$ iff $[a, RC]=Ca+C$.
\end{lem}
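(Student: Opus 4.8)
The plan is to reduce the stated equivalence to a single containment and then apply the dimension bound of Lemma \ref{lem14}. The first step is the observation that, because $C$ is the extended centroid and therefore central in $RC$ (with $1\in C\subseteq RC$), the summand $C$ contributes nothing to a bracket: for $\alpha,\beta\in C$ and $x\in RC$ one has $[\alpha a+\beta,x]=\alpha[a,x]=[a,\alpha x]$. Hence $[Ca+C,RC]$ is exactly the additive group generated by the elements $[a,\alpha x]$, which is just $[a,RC]$ (a $C$-subspace of $RC$ already). Since $Ca+C$ is automatically an additive subgroup of $RC$, this shows that $Ca+C$ is a Lie ideal of $RC$ if and only if $[a,RC]\subseteq Ca+C$.

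Granting that reformulation, the direction ``$[a,RC]=Ca+C\Rightarrow Ca+C$ is a Lie ideal'' is trivial, since $[a,RC]=Ca+C$ forces the required containment $[a,RC]\subseteq Ca+C$. For the converse, I would argue by dimension. Because $a$ is not central in $R$ and $Z(R)=R\cap C$, we have $a\in RC\setminus C$, so $a$ and $1$ are $C$-linearly independent in $RC$ and $\dim_C(Ca+C)=2$. Lemma \ref{lem14} applied to $a\in RC\setminus C$ yields $\dim_C[a,RC]>1$. A $C$-subspace of a two-dimensional $C$-space that has dimension greater than one must be the whole space, so $[a,RC]\subseteq Ca+C$ together with $\dim_C[a,RC]>1$ gives $[a,RC]=Ca+C$, as wanted.

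I do not expect a genuine obstacle in this argument; it is essentially a bookkeeping reduction followed by an invocation of Lemma \ref{lem14}. The only points requiring a little care are the collapse $[Ca+C,RC]=[a,RC]$ (which depends on $C$ being central and containing $1$, so that the ``$+C$'' term is inert in the bracket) and the standard identification $Z(R)=R\cap C$, which is what licenses passing from ``$a$ noncentral in $R$'' to ``$a\in RC\setminus C$'' and hence applying Lemma \ref{lem14}.
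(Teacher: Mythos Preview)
Your proposal is correct and follows essentially the same route as the paper's own proof: reduce the Lie ideal condition to the containment $[a,RC]\subseteq Ca+C$ via $[Ca+C,RC]=[a,RC]$, and then upgrade that containment to equality using the dimension bound of Lemma~\ref{lem14}. The paper's version is simply terser, leaving implicit the collapse $[Ca+C,RC]=[a,RC]$ and the passage from ``$a\notin Z(R)$'' to ``$a\in RC\setminus C$'' that you spell out.
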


\begin{proof}
``$\Rightarrow$":\ \ Since $Ca+C$ is a Lie ideal of $RC$, we get
$$
[a, RC]=[Ca+C, RC]\subseteq Ca+C.
$$
By Lemma \ref{lem14}, $[a, RC]=Ca+C$.
The converse implication is clear.
\end{proof}

The following gives another characterization of exceptional prime rings.

\begin{thm}\label{thm139}
Let $R$ be a noncommutative prime ring. Then the following are equivalent:

(i)\ $Ca+C$ is a Lie ideal of $RC$  for any $a\in [RC, RC]\setminus C$;

(ii)\ $R$ is exceptional.
\end{thm}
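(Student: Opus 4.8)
\textbf{Proof plan for Theorem \ref{thm139}.}

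The plan is to prove both implications by reducing everything to the structure of $[a,RC]$ for noncentral $a$, using Lemma \ref{lem14}, Lemma \ref{lem135}, Lemma \ref{lem7}, and Lemma \ref{lem131}. For the direction (ii) $\Rightarrow$ (i), assume $R$ is exceptional, so $RC$ is a $4$-dimensional central simple $C$-algebra. Fix $a\in[RC,RC]\setminus C$. By Lemma \ref{lem135} it suffices to show $[a,RC]=Ca+C$, equivalently (by Lemma \ref{lem14}) that $\dim_C[a,RC]=2$. Since $a\in[RC,RC]$, the Cayley--Hamilton relation for the $2\times 2$ picture gives $a^2=\beta$ for some $\beta\in C$ (this is exactly the computation already used in Lemma \ref{lem11}: $x\in[RC,RC]$ iff the trace vanishes iff $x^2\in C$). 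Then $[a,RC]\ni a$, and because $a\notin C$ the centralizer of $a$ in $RC$ is a proper subalgebra; a dimension count (centralizer of a noncentral element in a $4$-dimensional central simple algebra has dimension $2$) forces $\dim_C[a,RC]=\dim_C RC-\dim_C(\text{centralizer of }a)=4-2=2$. Hence $[a,RC]=Ca+C$ by Lemma \ref{lem14}, and (i) follows.

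For the contrapositive of (i) $\Rightarrow$ (ii), suppose $R$ is noncommutative but not exceptional; I must produce some $a\in[RC,RC]\setminus C$ for which $Ca+C$ is not a Lie ideal of $RC$, i.e.\ (by Lemma \ref{lem135} and Lemma \ref{lem14}) for which $\dim_C[a,RC]\geq 3$. I split into two cases. If $R$ is not a PI-ring, then $RC$ is infinite-dimensional over $C$; pick any noncentral $a$ and note $[a,RC]$ lies in the Lie ideal generated by $a$, and standard prime-ring machinery (or directly: if $\dim_C[a,RC]=2$ then $Ca+C$ is a Lie ideal, hence $[[RC,RC],[RC,RC]]\subseteq C$ by the argument in Lemma \ref{lem131}, forcing $RC$ to be a $4$-dimensional exceptional algebra by Lemma \ref{lem7}, contradiction). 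The point is that Lemma \ref{lem131}'s chain of reasoning shows $\dim_C(Ca+C)=2$ being a Lie ideal propagates to $\big[[RC,RC],[RC,RC]\big]=0$; then Lemma \ref{lem7} applied to $RC$ (whose extended centroid is $C$) gives exceptionality. If $R$ \emph{is} a PI-ring but not exceptional, then $RC$ is a finite-dimensional central simple algebra of dimension $d\geq 4$ with $d\neq 4$ or $\operatorname{char}R\neq 2$; in $M_k(C)$-type models with $d\geq 9$ one exhibits a traceless $a$ whose centralizer has dimension $<d-2$, so $\dim_C[a,RC]\geq 3$; the remaining small case $d=4$ with $\operatorname{char}R\neq 2$ is handled by a direct $2\times 2$ computation producing a traceless matrix $a$ with $[a,RC]$ still $2$-dimensional but $Ca+C$ \emph{not} closed under bracket — actually in $d=4$ one checks $[a,RC]=Ca+C$ always holds, so the genuine obstruction lives in $d\geq 9$, and a cleaner route is: if every noncentral traceless $a$ has $Ca+C$ a Lie ideal, run the Lemma \ref{lem131} argument to get $[[RC,RC],[RC,RC]]\subseteq C$, then Lemma \ref{lem7} forces $\dim_C RC=4$ and $\operatorname{char}R=2$, i.e.\ exceptional.

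So the cleanest structure is: (i) $\Rightarrow$ (ii) is really the statement ``if $Ca+C$ is a Lie ideal for all noncentral traceless $a$, then $\big[[RC,RC],[RC,RC]\big]\subseteq C$,'' after which Lemma \ref{lem7} (applied with $RC$ in place of $R$, legitimate since $RC$ is prime with the same extended centroid $C$) yields exceptionality; and (ii) $\Rightarrow$ (i) is the dimension count above. To get $\big[[RC,RC],[RC,RC]\big]\subseteq C$ from the hypothesis: for noncentral traceless $a$, $[a,RC]=Ca+C\subseteq C+[RC,RC]$; since the additive group $[RC,RC]\setminus C$ together with $C$ spans $[RC,RC]$, one deduces $[RC,[RC,RC]]\subseteq [RC,RC]$ stays inside a $2$-dimensional piece per generator, and iterating (exactly as in the proof of Lemma \ref{lem131}(i)$\Rightarrow$(ii), where $RC=LC+Cb$ leads to $[RC,RC]\subseteq LC$ and then the double commutator lands in $C$) gives the claim. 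The main obstacle I anticipate is organizing the non-PI case and the argument that ``$Ca+C$ a Lie ideal for \emph{all} relevant $a$'' genuinely forces the double-commutator into $C$ without circularity — here I lean entirely on the already-established Lemma \ref{lem131} and Lemma \ref{lem7}, transplanting their proofs from $R$ to $RC$ and from $L$ to $[RC,RC]$, which is legitimate because $RC$ is itself a prime ring with extended centroid $C$ and $[RC,RC]$ is a noncentral Lie ideal of it.
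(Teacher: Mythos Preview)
Your direction (i) $\Rightarrow$ (ii) is drastically overcomplicated and contains an error, while your direction (ii) $\Rightarrow$ (i) has a genuine gap.

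\medskip

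\textbf{On (i) $\Rightarrow$ (ii).} The paper's proof is one line: since $R$ is noncommutative, $[RC,RC]\setminus C\neq\emptyset$; pick any such $a$, so $Ca+C$ is a noncentral Lie ideal of $RC$ with $\dim_C(Ca+C)=2$, and Lemma~\ref{lem131} (specifically (iii) $\Rightarrow$ (ii)) applied to $RC$ immediately gives that $R$ is exceptional. You never need ``for all $a$'', you never need the PI/non-PI split, and you never need to prove $\big[[RC,RC],[RC,RC]\big]\subseteq C$ by hand. Your case analysis also contains a false claim: in the $4$-dimensional case with $\text{char}\,R\neq 2$ you assert ``$[a,RC]=Ca+C$ always holds'', but for $a=e_{12}$ in $M_2(C)$ one has $[a,e_{21}]=e_{11}-e_{22}\notin Ce_{12}+C$ when $\text{char}\,C\neq 2$, so $Ca+C$ is \emph{not} a Lie ideal there, exactly as the theorem predicts.

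\medskip

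\textbf{On (ii) $\Rightarrow$ (i).} Your dimension count $\dim_C[a,RC]=4-\dim_C\mathfrak C_{RC}(a)=2$ is fine, but it does \emph{not} identify which $2$-dimensional subspace $[a,RC]$ is; your assertion ``$[a,RC]\ni a$'' is unsupported, and ``hence $[a,RC]=Ca+C$ by Lemma~\ref{lem14}'' is a non sequitur (Lemma~\ref{lem14} only gives $\dim_C[a,RC]>1$). The paper closes this gap differently: from Lemma~\ref{lem7} one has $0\neq\big[a,[RC,RC]\big]\subseteq C$, so there is $r$ with $[a,r]=1$; then $\beta a+\gamma=[a,(\beta a+\gamma)r]$ shows $Ca+C\subseteq [a,RC]$, while writing $RC=[RC,RC]+Cb$ gives $[a,RC]=C+C[a,b]$, hence $\dim_C[a,RC]=2$ and equality follows. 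If you want to salvage your centralizer approach, the missing observation is that $a^2\in C$ together with $\text{char}\,R=2$ forces $\text{ad}_a^2=0$, so $[a,RC]\subseteq\ker(\text{ad}_a)=\mathfrak C_{RC}(a)=Ca+C$, and then your dimension count gives equality; but as written, the step is a gap.
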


\begin{proof}
Assume that (i) holds.
Since $R$ is a noncommutative prime ring, we have $[RC, RC]\setminus C\ne \emptyset$ (see Lemma \ref{lem8} (ii)). Choose $a\in [RC, RC]\setminus C$.
Thus $Ca+C$ is a Lie ideal of $RC$, which is of dimension $2$. In view of Lemma \ref{lem131}, $R$ is exceptional.

Conversely, assume that (ii) holds, that is, $R$ is exceptional.
Let  $a\in [RC, RC]\setminus C$.
Then
$$
0\ne \big[a, [RC, RC]\big]\subseteq \big[[RC, RC], [RC, RC]\big]\subseteq C,
$$
where the second inclusion follows from Lemma \ref{lem7}.
Choose $r\in [RC, RC]$ such that $[a, r]=1$.
Let $\beta, \gamma\in C$. Then $\beta a+\gamma=[a, (\beta a+\gamma)r]\in [a, RC]$.
This implies that $Ca+C\subseteq [a, RC]$.
On the other hand,
$ [RC, RC]+Cb=RC$
for some $b\in RC$. Then
$$
[a, RC]=[a,  [RC, RC]+Cb]=C+C[a, b].
$$
Therefore, $\dim_C[a, RC]=2$ and so $[a, RC]=Ca+C$. In view of Lemma \ref{lem135}, $Ca+C$ is a Lie ideal of $RC$, as desired.
\end{proof}

The following characterizes Lie ideals of exceptional prime rings.

\begin{thm}\label{thm19}
Let $R$ be an exceptional prime ring, and let $L$ be a Lie ideal of $R$. Then
$L\subseteq Z(R)$, $LC=[a, RC]=Ca+C$ for some $a\in L\setminus Z(R)$, or $[RC, RC]\subseteq LC$.
\end{thm}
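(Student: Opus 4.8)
The plan is to dispose of the two easy alternatives first and then reduce everything to a dimension count inside $RC$. If $L\subseteq Z(R)$ we are in the first case, so assume $L$ is noncentral. If $L$ is abelian, then Lemma~\ref{lem131} applied to $R$ and $L$ gives at once $LC=[a,RC]=Ca+C$ for every $a\in L\setminus Z(R)$, i.e. the second case. Hence the whole theorem reduces to the single assertion:

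\emph{if $L$ is a noncentral, non-abelian Lie ideal of an exceptional prime ring $R$, then $[RC,RC]\subseteq LC$.}

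To attack this I would pass to $Q:=RC$, a $4$-dimensional central simple $C$-algebra (so $Q$ is simple, hence prime, with $Z(Q)=C$ and extended centroid $C$), and put $M:=LC$, a Lie ideal of $Q$ that is noncentral (else $L\subseteq C\cap R=Z(R)$) and satisfies $[M,M]\neq0$ because $[L,L]\neq0$.

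First I would pin down $[Q,Q]$. Since $Q$ is noncommutative, $[Q,Q]\not\subseteq C$ by Lemma~\ref{lem8}(ii); by Lemma~\ref{lem7} the set $\bigl[[Q,Q],[Q,Q]\bigr]$ is a nonzero subset of $C$, so $[Q,Q]$ is non-abelian; being a noncentral non-abelian Lie ideal, $\dim_C[Q,Q]\notin\{1,2\}$ by Lemmas~\ref{lem14} and~\ref{lem131}, while $\dim_C[Q,Q]=4$ would give $\bigl[[Q,Q],[Q,Q]\bigr]=[Q,Q]\not\subseteq C$, a contradiction; thus $\dim_C[Q,Q]=3$. Also $C\subseteq[Q,Q]$: choosing $a,b\in[Q,Q]$ with $0\neq[a,b]\in C$ and noting $[a,b]\in[Q,Q]$ gives $C=C[a,b]\subseteq[Q,Q]$. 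For $M$ itself, $\dim_C M\geq3$: if $\dim_C M\leq1$ then $M=Cm$ with $m\notin C$, so $[m,Q]\subseteq Cm$ contradicts Lemma~\ref{lem14}; if $\dim_C M=2$ then Lemma~\ref{lem131} would make $M$ abelian, contradicting $[M,M]\neq0$. If $\dim_C M=4$ then $M=Q\supseteq[Q,Q]$ and we are finished.

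The main case is $\dim_C M=3$; here I claim $M=[Q,Q]$. Suppose not. Since $M$ and $[Q,Q]$ are distinct $3$-dimensional subspaces of the $4$-dimensional $Q$, we have $M+[Q,Q]=Q$ and $N:=M\cap[Q,Q]$ has $\dim_C N=2$. Pick $v\in M\setminus[Q,Q]$, so $Q=Cv\oplus[Q,Q]$. Expanding $[Q,Q]=\bigl[\,Cv\oplus[Q,Q],\,Cv\oplus[Q,Q]\,\bigr]$ by bilinearity of the bracket and using $\bigl[[Q,Q],[Q,Q]\bigr]\subseteq C$ yields $[Q,Q]\subseteq[v,Q]+C$; as $v\in M$ and $M$ is a Lie ideal, $[v,Q]\subseteq M\cap[Q,Q]=N$, whence $[Q,Q]=N+C$. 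Now $N$ is a Lie ideal of $Q$, since $[Q,N]\subseteq[Q,M]\cap\bigl[Q,[Q,Q]\bigr]\subseteq M\cap[Q,Q]=N$, and it is noncentral ($\dim_C N=2>1$). If $C\subseteq N$, then $[Q,Q]=N+C=N$ has dimension $2$, contradicting $\dim_C[Q,Q]=3$. If $C\not\subseteq N$, then $N\cap C=0$; but Lemma~\ref{lem131} applied to the $2$-dimensional Lie ideal $N$ of $Q$ gives $N=NC=Ca+C\supseteq C$ for any $a\in N\setminus\{0\}$, a contradiction. Therefore $M=[Q,Q]$, i.e. $[RC,RC]\subseteq LC$, which proves the theorem.

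I expect the $3$-dimensional case to be the real obstacle: one needs to squeeze from Lemma~\ref{lem7} enough structure to recognize $[RC,RC]$ as the essentially unique Lie ideal of $RC$ of codimension $1$, and the identity $[Q,Q]=N+C$ combined with the classification of $2$-dimensional Lie ideals (Lemma~\ref{lem131}) is the mechanism that forces a rival $3$-dimensional Lie ideal to coincide with it. If that route stalls, the fallback is to invoke the explicit structure of a $4$-dimensional central simple algebra in characteristic $2$ and argue separately in the matrix and the division-quaternion subcases, but I would rather not resort to that case split.
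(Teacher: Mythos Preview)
Your argument is correct, but it takes a considerably longer path than the paper's. The paper disposes of the non-abelian case in one line: once you know $\dim_C LC>2$ (equivalently, by Lemma~\ref{lem131}, that $L$ is noncentral and non-abelian), the $4$-dimensionality of $RC$ gives $RC=LC+Cb$ for some $b\in RC$, and Lemma~\ref{lem4} (with $V=Cb$, so $[V,V]=0$) immediately yields $[RC,RC]\subseteq LC$. There is no need to compute $\dim_C[Q,Q]$, no need to distinguish $\dim_C M=3$ from $\dim_C M=4$, and no intersection argument.

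Your route does buy a little extra: you actually prove $\dim_C[RC,RC]=3$ and that in the $3$-dimensional case $LC=[RC,RC]$ exactly, not merely $[RC,RC]\subseteq LC$. That is a pleasant structural fact about exceptional prime rings, but it is not required for the theorem. Your closing remark that the $3$-dimensional case is ``the real obstacle'' and your contingency plan of a quaternion/matrix case split both indicate that you overlooked Lemma~\ref{lem4}; once you see it, the obstacle evaporates.
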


\begin{proof}
Suppose that $L$ is noncentral. If $\dim_CLC=2$, by Lemma \ref{lem131} we see that $LC=[a, RC]=Ca+C$  for some $a\in L\setminus Z(R)$.
Suppose that $\dim_CLC>2$. Then $RC=LC+Cb$ for some $b\in RC$. In view of Lemma \ref{lem4}, $[RC, RC]\subseteq LC$, as desired.
\end{proof}

Note that $RC=R$ if $R$ is a simple ring. Together with Herstein's theorem on simple rings (see Theorem \ref{thm5}), the following characterizes Lie ideals of simple rings (i.e., Theorem A).

\begin{thm}\label{thm20}
Let $R$ be a simple ring, and let $L$ be a Lie ideal of $R$. Then
$L\subseteq Z(R)$, $L=Z(R)a+Z(R)$ for some $a\in L\setminus Z(R)$, or $[R, R]\subseteq L$.
\end{thm}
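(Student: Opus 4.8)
The plan is to reduce everything to the dichotomy ``$L$ abelian / $L$ not abelian''. The alternative $L\subseteq Z(R)$ requires no argument, so I would assume $L$ is noncentral throughout.

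Suppose first that $L$ is not abelian, i.e. $[L,L]\neq 0$. Then $\widetilde{R}[L,L]\widetilde{R}$ is the ideal of $R$ generated by the nonzero set $[L,L]$, hence a nonzero ideal of the simple ring $R$; therefore $\widetilde{R}[L,L]\widetilde{R}=R$. By Lemma~\ref{lem8}(vi),
$$[R,R]=\big[R,\widetilde{R}[L,L]\widetilde{R}\big]\subseteq L,$$
so we land in the third alternative. In particular this shows that any non-abelian Lie ideal of a simple ring contains $[R,R]$, which already recovers Herstein's conclusion when $R$ is not exceptional (by Lemma~\ref{lem131} a noncentral abelian Lie ideal forces $R$ exceptional) and, via Theorem~\ref{thm19}, disposes of the exceptional case whenever $\dim_C LC\geq 3$.

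Now suppose $L$ is abelian. By Lemma~\ref{lem131}, $R$ must be exceptional and $LC=[a,RC]=Ca+C$ for every $a\in L\setminus Z(R)$. Since $R$ is simple, $RC=R$; and since an exceptional ring is finite-dimensional over its center (hence simple Artinian, in particular unital), we get $C=Z(R)$. I would then squeeze $L$ between a lower and an upper bound: fix $a\in L\setminus Z(R)$. As $a\in L$ and $L$ is a Lie ideal, $[a,R]\subseteq L$; on the other hand, since $RC=R$,
$$[a,R]=[a,RC]=Ca+C=LC\supseteq L.$$
Hence $L=[a,R]=Ca+C=Z(R)a+Z(R)$, which is the second alternative.

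The one delicate point is the abelian case. A noncentral Lie ideal of a simple ring need not be stable under multiplication by $Z(R)$ — it can properly contain $[R,R]$ as a bare additive extension — so there is no free passage between $L$ and $LC$ in general. What rescues the abelian case is that $[a,R]$ is automatically a $Z(R)$-submodule and, by Lemma~\ref{lem131}, already equals $LC$, pinching $L$ from both sides. I expect this squeezing step, together with the bookkeeping identifications $RC=R$ and $C=Z(R)$ for simple rings, to be the only nonroutine ingredients; the non-abelian case is essentially immediate from simplicity.
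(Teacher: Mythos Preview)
Your proof is correct, and the abelian branch is essentially identical to the paper's (both squeeze $L$ between $[a,R]\subseteq L$ and $L\subseteq LC=[a,RC]=[a,R]$, using $RC=R$ and $C=Z(R)$ for simple $R$).

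The organizational difference lies in the other branch. The paper splits first on whether $R$ is exceptional: if not, it invokes Herstein's Theorem~\ref{thm5} as a black box; if so, it applies Theorem~\ref{thm19} and then, in the subcase $[RC,RC]\subseteq LC$, runs a computation with $R\big[[R,R],[R,R]\big]R=R$ and Theorem~\ref{thm18} to push $[R,R]$ down into $L$. You instead split on whether $L$ is abelian, and in the non-abelian case use simplicity to get $\widetilde{R}[L,L]\widetilde{R}=R$ and then Lemma~\ref{lem8}(vi) to conclude $[R,R]\subseteq L$ in one line. Your route is more uniform: it handles the non-exceptional case and the ``$\dim_C LC\geq 3$'' exceptional subcase simultaneously, and it does so without appealing to Herstein's theorem at all --- so your argument in fact \emph{reproves} Theorem~\ref{thm5} rather than consuming it. The paper's route, on the other hand, makes the historical dependence on Herstein explicit and isolates the new content (the exceptional case) via Theorem~\ref{thm19}. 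One small wording issue: your phrase ``an exceptional ring is finite-dimensional over its center'' is literally about $RC$ over $C$; it becomes a statement about $R$ over $Z(R)$ only after you have already used $R=RC$, so the logic is fine but the sentence could be tightened.
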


\begin{proof}
Case 1:\ $R$ is not exceptional. By Herstein's theorem (see Theorem \ref{thm5}), either $L\subseteq Z(R)$ or $[R, R]\subseteq L$.

Case 2:\ $R$ is exceptional. In this case, we have $C=Z(R)$ and $RC=R$. In view of Theorem \ref{thm19}, $L\subseteq Z(R)$, $LC=[a, RC]=Ca+C$ for some $a\in L\setminus Z(R)$, or $[RC, RC]\subseteq LC$.

For the second case, $LC=Ca+C=[a, RC]=[a, R]\subseteq L$ and so $L=Ca+C=Z(R)a+Z(R)$.
We consider the last case that $[RC, RC]\subseteq LC$. Since $RC=R$, we get $[R, R]\subseteq LC$. Then
 \begin{eqnarray}
\big[R, [R, R]\big]\subseteq [R, LC]=[RC, L]=[R, L]\subseteq L.
\label{eq:10}
\end{eqnarray}
Since $\big[[R, R], [R, R]\big]\ne 0$ (see Lemma \ref{lem8} (ii)), it follows from the  simplicity of $R$ that $R=R\big[[R, R], [R, R]\big]R$.
By Theorem \ref{thm18}, Lemma \ref{lem8} (i), and Eq.\eqref{eq:10}, we get
$$
[R, R]=\Big[R, R\big[[R, R], [R, R]\big]R\Big]\subseteq \big[R, \overline{[R, R]}\big]=\big[R, [R, R]\big]\subseteq L,
$$
as desired.
\end{proof}

\section{Theorem B}
Let $R$ be a ring. Given a subset $A$ of $R$, we denote by $\mathfrak{C}_R(A)$ the centralizer of $A$ in $R$, that is,
$$
\mathfrak{C}_R(A)=\{x\in R\mid ax=xa\ \forall a\in A\}.
$$
If $A=\{a\}$ for some $a\in R$, we will write $\mathfrak{C}_R(a)$ instead of $\mathfrak{C}_R(\{a\})$.

\begin{lem}\label{lem9}
Let $R$ be an exceptional prime ring. If $a\in R\setminus Z(R)$, then $\mathfrak{C}_{RC}(a)=Ca+C$.
\end{lem}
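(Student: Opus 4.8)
The plan is to compute $\mathfrak{C}_{RC}(a)$ by a dimension count, exploiting the fact that $RC$ is $4$-dimensional over $C$ because $R$ is exceptional. First I would record the trivial inclusion: every element of $Ca+C$ commutes with $a$, so $Ca+C\subseteq \mathfrak{C}_{RC}(a)$; and since $a\notin C$ (otherwise $a$, lying in the center of $Q_s(R)\supseteq R$, would centralize $R$, contradicting $a\notin Z(R)$), the subspace $Ca+C$ has $\dim_C(Ca+C)=2$. It therefore suffices to show $\dim_C\mathfrak{C}_{RC}(a)\le 2$.

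For that I would consider the $C$-linear endomorphism $\mathrm{ad}_a$ of $RC$ given by $x\mapsto [a,x]$. Its kernel is precisely $\mathfrak{C}_{RC}(a)$, and its image is $[a,RC]$ (note that $\{[a,x]\mid x\in RC\}$ is already a $C$-subspace, hence coincides with the additive group $[a,RC]$). Applying rank--nullity in the finite-dimensional $C$-space $RC$ gives
$$\dim_C\mathfrak{C}_{RC}(a)+\dim_C[a,RC]=\dim_C RC=4.$$
Since $a\in RC\setminus C$, Lemma~\ref{lem14} yields $\dim_C[a,RC]>1$, i.e. $\dim_C[a,RC]\ge 2$. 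Hence $\dim_C\mathfrak{C}_{RC}(a)\le 2$, and combined with $Ca+C\subseteq\mathfrak{C}_{RC}(a)$ and $\dim_C(Ca+C)=2$ we conclude $\mathfrak{C}_{RC}(a)=Ca+C$.

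There is no serious obstacle here: the argument is essentially a two-line dimension count once Lemma~\ref{lem14} is available. The only points needing a word of care are the justification that $a\notin C$ (so that $\dim_C(Ca+C)=2$) and the observation that $[a,RC]$ is genuinely the image of the $C$-linear map $\mathrm{ad}_a$, which is what makes rank--nullity applicable. The hypothesis that $R$ is exceptional enters only through $\dim_C RC=4$; the characteristic-$2$ condition is not used in this particular lemma.
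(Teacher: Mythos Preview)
Your proof is correct and follows essentially the same approach as the paper: both argue via the $C$-linear map $x\mapsto [a,x]$ on $RC$, invoke Lemma~\ref{lem14} to get $\dim_C[a,RC]\ge 2$, and then use rank--nullity together with $\dim_C RC=4$ and the trivial inclusion $Ca+C\subseteq\mathfrak{C}_{RC}(a)$ to conclude. Your write-up is slightly more explicit about why $a\notin C$ and why $[a,RC]$ is already a $C$-subspace, but the argument is the same.
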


\begin{proof}
Consider the $C$-linear map $\phi\colon RC\to [a, RC]$, which is defined by $\phi(x)=[a, x]$ for $x\in RC$.
Clearly, $\mathfrak{C}_{RC}(a)$ is the kernel of $\phi$. Thus
$$
RC/\mathfrak{C}_{RC}(a)\cong [a, RC].
$$
Clearly, $Ca+C\subseteq \mathfrak{C}_{RC}(a)$ and, by Lemma \ref{lem14}, $\dim_C[a, RC]>1$.
Since $\dim_CRC=4$, we have $\mathfrak{C}_{RC}(a)=Ca+C$, as desired.
\end{proof}

We are now ready to prove the following (i.e., Theorem B).

\begin{thm}\label{thm16}
Let $R$ be a prime ring with a noncentral Lie ideal $L$, and $a\in R$.

\noindent Case 1:\ $L$ is not abelian. If $a\in R\setminus Z(R)$, then $L+aL$ contains a nonzero ideal of $R$;

\noindent Case 2:\  $L$ is abelian. Then $LC=Cb+C$ for some $b\in L\setminus Z(R)$ and the following hold:

(i)\ If $a\in LC$, then $L+aL$ does not contain any nonzero ideal of $R$;

(ii)\ If $a\in [RC, RC]\setminus LC$, then $L+aL$ contains a nonzero ideal of $R$;

(iii)\ If $a\notin [RC, RC]$, then $L+aL$ contains a nonzero ideal of $R$ iff $[a, b]$ is a unit of $RC$.
\end{thm}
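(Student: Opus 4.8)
The plan is to handle the two cases separately, reducing everything to computations inside the $4$-dimensional central simple algebra $RC$ in Case 2. For Case 1, suppose $L$ is not abelian and $a\notin Z(R)$. By Theorem \ref{thm8} (with $m=2$), $L^2$ contains the nonzero ideal $I:=\widetilde R[L,L^2]\widetilde R$ of $R$. I would argue that $L+aL$ contains a nonzero ideal as follows. Since $L$ is noncentral and not abelian, $[L,L]\neq 0$, so by Theorem \ref{thm18} the ideal $J:=\widetilde R[L,L]\widetilde R$ is nonzero and contained in $L+L^2$; moreover $[R,J]\subseteq L$ by Lemma \ref{lem8}(vi). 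The idea is that for $x\in L$ and $c\in J$ one can write $ac$ and $ca$ in terms of elements of $L$ and of $aL$ using $ac = [a,c]+ca$, together with the fact that $[a,c]\in[R,J]\subseteq L$; iterating this and using primeness (so that $a$ does not annihilate the nonzero ideal $J$) should produce a nonzero ideal of $R$ inside $L+aL$. This is the step I expect to require the most care: making the containment $L+aL\supseteq(\text{nonzero ideal})$ precise likely needs an auxiliary lemma showing that if $a\notin Z(R)$ then $aJ+J$ (or $a\widetilde R c\widetilde R+\widetilde R c\widetilde R$ for a suitable $c$) contains a nonzero ideal — here primeness and the fact that $a$ is noncentral (so $[a,RC]$ is more than $1$-dimensional, by Lemma \ref{lem14}) are the tools.

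For Case 2, $L$ is abelian, so by Lemma \ref{lem131}, $R$ is exceptional, $\dim_CLC=2$, and $LC=Cb+C=[b,RC]$ for any noncentral $b\in L$. All three subcases are computations in the $4$-dimensional algebra $RC$, using that $L+aL$ contains a nonzero ideal of $R$ if and only if $(L+aL)C=RC$ (here I would invoke the standard fact for prime rings that an additive subgroup of $R$ contains a nonzero ideal of $R$ iff its $C$-span is all of $RC$ — this follows since $RC$ is simple and any nonzero ideal of $R$ spans $RC$ over $C$, while conversely a $C$-submodule equal to $RC$ meets every nonzero ideal). So the task reduces to deciding when $LC+aLC=RC$, i.e., when $(Cb+C)+a(Cb+C)=RC$, i.e., when $\{b,1,ab,a\}$ spans $RC$ over $C$.

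For (i), if $a\in LC=Cb+C$, then $a=\beta b+\gamma$, so $ab,a\in Cb^2+Cb+C$; since $b^2\in Cb+C$ (as $RC$ is $4$-dimensional and $b$ satisfies a quadratic over $C$), we get $ab,a\in Cb+C=LC$, hence $LC+aLC=LC\neq RC$, and $L+aL$ contains no nonzero ideal. For (ii), if $a\in[RC,RC]\setminus LC$: then $a\notin C$, and I claim $\{1,b,a,ab\}$ is $C$-independent. Independence of $1,b,a$ is clear since $a\notin Cb+C$; and $ab\notin Cb+C+Ca$ would force, after reducing $ab$ modulo the quadratic relations, a linear dependence contradicting $a\notin LC$ — more cleanly, one can observe that $Ca+C$ is then a Lie ideal of $RC$ by Theorem \ref{thm139}, so $[a,b]\in[Ca+C,Cb+C]\subseteq(Ca+C)\cap(Cb+C)=C$, and since $a,b\notin C$ one checks $[a,b]\neq 0$ (otherwise $\mathfrak C_{RC}(a)\supseteq Cb+Ca+C$ would be $3$-dimensional, contradicting Lemma \ref{lem9}); then $ab=ba+[a,b]$ with $[a,b]\in C^\times$, and a short argument gives $Cb+C+Ca+Cab=RC$. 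For (iii), if $a\notin[RC,RC]$, then $a^2=\mu_1a+\mu_2$ with $\mu_1\neq 0$; writing $u:=[a,b]$, the span of $\{1,b,a,ab\}$ equals $RC$ iff it is $4$-dimensional, and the obstruction to fullness is exactly a linear relation among these four elements. I would compute $[a,[a,b]]=[a^2,b]=\mu_1[a,b]=\mu_1 u$, so $au-ua=\mu_1 u$; combined with the quadratic relations for $a$ and $b$, a direct (if slightly tedious) linear-algebra computation in $RC$ shows $\{1,b,a,ab\}$ spans $RC$ iff $u=[a,b]$ is invertible in $RC$. The main obstacle in Case 2 is organizing this last computation cleanly; I expect using the Lie-ideal structure ($Cb+C=[b,RC]$) and the trace/determinant form on the $4$-dimensional algebra to make the invertibility criterion transparent, rather than brute-forcing with a basis.
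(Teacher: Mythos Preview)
Your Case~1 sketch does not actually produce an ideal inside $L+aL$; you acknowledge this yourself. The paper's argument is short and different from what you propose: it first shows $[a,L]\ne 0$ (using that $L$ is not abelian and $a\notin Z(R)$), then uses the identity $[a,\ell]r=[ar,\ell]-a[r,\ell]$ to get $[a,L]R\subseteq L+aL$, and finally observes that $L^2[a,L]R\subseteq L+aL$ as well (since $[L^2,[a,L]R]\subseteq[\overline L,R]=[L,R]\subseteq L$). Since $L^2$ contains a nonzero ideal $I$ by Theorem~\ref{thm3}, the nonzero ideal $I[a,L]R$ lies in $L+aL$. Your proposed route through $J=\widetilde R[L,L]\widetilde R$ and an ``auxiliary lemma'' about $aJ+J$ is not needed.

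The more serious gap is in Case~2. Your reduction rests on the claim that for an additive subgroup $A$ of a prime PI-ring $R$, $A$ contains a nonzero ideal of $R$ iff $AC=RC$. The forward implication is fine, but the converse is \emph{false}: take $R=M_2(F[t])$ with $F$ a field of characteristic~$2$ and $A=M_2(F)$; then $AC=M_2(F(t))=RC$, yet every nonzero ideal of $R$ has the form $M_2(p(t)F[t])$ and cannot sit inside $M_2(F)$. Your justification (``a $C$-submodule equal to $RC$ meets every nonzero ideal'') conflates meeting an ideal with containing one. This breaks your proofs of (ii) and of the implication ``$[a,b]$ unit $\Rightarrow$ $L+aL$ contains a nonzero ideal'' in (iii). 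The paper avoids this by explicitly exhibiting ideals: in (ii) one checks $0\ne[a,b]\in Z(R)$ and then $[a,b]R\subseteq[b,aR]+a[b,R]\subseteq L+aL$; in (iii) one proves the sandwich
\[
[a,b]R+R[a,b]\ \subseteq\ L+aL\ \subseteq\ [a,b]RC+RC[a,b],
\]
uses the right inclusion (together with $(L+aL)C=RC$) and idempotent generators of one-sided ideals in the simple Artinian ring $RC$ to force $[a,b]$ to be a unit, and for the converse uses that $[a,b]R$ is then a prime PI-ring with nonzero center to extract $0\ne\beta\in Z(R)$ with $\beta R\subseteq[a,b]R\subseteq L+aL$. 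Your linear-algebra reduction to the span of $\{1,b,a,ab\}$ is correct for part (i) and for the ``only if'' half of (iii), but it cannot by itself deliver an ideal of $R$ inside $L+aL$.
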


\begin{proof}
{Case 1}:\  Assume that $L$ is not abelian and $a\notin Z(R)$. In particular, $L$ is a noncentral Lie ideal of $R$. We claim that $[a, L]\ne 0$. Otherwise, we have $[a, L]=0$.
In view of \cite[Lemma 8]{lanski1972}, $R$ is exceptional. Suppose that $\dim_CLC\geq 3$. Then $[RC, RC]\subseteq LC$ (see the proof of Lemma \ref{lem131}) and so $\big[a, [R, R]\big]=0$. In view of Lemma \ref{lem8} (ii), it follows that $a\in Z(R)$, a contradiction. Hence $\dim_CLC=2$. By
Lemma \ref{lem131}, $L$ is abelian, a contradiction. Up to now, we have proved that $[a, L]\ne 0$.

Note that $[a, \ell]r=[ar, \ell]-a[r, \ell]$ for $r\in R$ and $\ell\in L$. Therefore,
$$
0\ne [a, L]R\subseteq [aR, L]+a[R, L]\subseteq L+aL.
$$
Since $\big[L^2, [a, L]R\big]\subseteq [\overline L, R]=[L, R]\subseteq L$ (see Lemma \ref{lem8} (i)), we see that
 \begin{eqnarray}
L^2[a, L]R\subseteq \big[L^2, [a, L]R\big]+[a, L]RL^2\subseteq L+aL.
\label{eq:11}
\end{eqnarray}
In view of Theorem \ref{thm3}, $L^2$ contains a nonzero ideal, say $I$, of $R$. Hence
$L+aL$ contains the nonzero ideal $I[a, L]R$ of $R$, as desired.

{Case 2}:\  $L$ is abelian. It follows from Lemma \ref{lem131} that
$R$ is exceptional and
$$
LC=[b, RC]=Cb+C
$$
for some $b\in L\setminus Z(R)$.

(i)\ Assume that $a\in LC$. Then $(L+aL)C=LC$. Since $\dim_CLC=2$, we get $LC\ne RC$ and so $L+aL$ does not contain a nonzero ideal of $R$.

(ii) Assume that $a\in [RC, RC]\setminus LC$.
Then, by Lemma \ref{lem7}, we have
$$
[a, b]\in \big[[RC, RC], [RC, RC]\big]\subseteq C.
$$
Since $a\notin LC$, this implies that $[a, b]\ne 0$ (see Lemma \ref{lem9}). Therefore,
$$
0\ne [a, b]R\subseteq [b, aR]+a[b, R]\subseteq [b, R]+a[b, R]\subseteq L+aL.
$$
That is, $L+aL$ contains the nonzero ideal $[a, b]R$ of $R$, as desired.

(iii) Assume that $a\notin [RC, RC]$. It follows from Lemma \ref{lem9} that $[a, b]\ne 0$. In view of Lemma \ref{lem11}, we have $[a, b]\notin C$. Clearly, $[a, b]\in L$. This implies that
$$
LC=[[a, b], RC]=C[a, b]+C.
$$
We claim that
 \begin{eqnarray}
[a, b]R+R[a, b]\subseteq L+aL\subseteq [a, b]RC+RC[a, b].
\label{eq:7}
\end{eqnarray}
From the proof of (ii), it follows that $[a, b]R\subseteq L+aL$. Since $L+aL=L+La$, we get $R[a, b]\subseteq L+La=L+aL$.
Therefore, $[a, b]R+R[a, b]\subseteq L+aL$. In order to prove the second inclusion of Eq.\eqref{eq:7}, we compute
 \begin{eqnarray}
L+aL&\subseteq&LC+aLC\nonumber\\
         &=&\big[[a, b], RC\big]+a\big[[a, b], RC\big]\nonumber\\
         &\subseteq& [a, b]RC+RC[a, b]+a[a, b]RC+aRC[a, b]\nonumber\\
         &\subseteq& [a, b]RC+RC[a, b]+a[a, b]RC.
\label{eq:9}
\end{eqnarray}
Note that $a^2=\mu_1a+\mu_2$ for some $\mu_1, \mu_2\in C$. Then $[a, [a, b]]=[a^2, b]=\mu_1[a, b]$,
and so $a[a, b]=[a, b](a+\mu_1)$.
Therefore,
$$
a[a, b]RC=\big([a, b](a+\mu_1)\big)RC\subseteq [a, b]RC.
$$
By Eq.\eqref{eq:9}, we see that $L+aL\subseteq [a, b]RC+RC[a, b]$ and so Eq.\eqref{eq:7} is proved.

We first consider the case that $L+aL$ contains a nonzero ideal of $R$. Then, by Eq.\eqref{eq:7}, we have
$$
RC=(L+aL)C=[a, b]RC+RC[a, b].
$$
Since $RC$ is a simple Artinian ring, every one-sided ideal of $RC$ is generated by one idempotent. Write $[a, b]RC=eRC$ and $RC[a, b]=RCf$
for some idempotents $e, f\in RC$. Then
 \begin{eqnarray*}
(1-e)RC(1-f)&=&(1-e)([a, b]RC+RC[a, b])(1-f)\\
                      &=&(1-e)(eRC+RCf)(1-f)\\
                      &=&0.
\end{eqnarray*}
Thus, either $e=1$ or $f=1$. In either case, $[a, b]$ is a unit of $RC$.

Conversely, assume that $[a, b]$ is a unit of $RC$. In this case, it is clear that as a ring, $[a, b]R$ is a prime ring. Since $RC$ is a PI-ring, so is the prime ring $[a, b]R$. In view of \cite[Corollary 1]{rowen1973}, $Z([a, b]R)\ne 0$. Choose $0\ne \beta\in Z([a, b]R)$. Let $x\in R$. Then $[a, b]RCx\subseteq  [a, b]RC$ and so
$[\beta, [a, b]RCx]=0$. Since $[a, b]$ is a unit of $RC$, we get $[a, b]RC=RC$. Therefore, $[\beta, RCx]=0$ and so $[\beta, x]=0$ as $1\in RC$. That is, $\beta\in Z(R)$. Hence $\beta R$ is a nonzero ideal of $R$ and $\beta R\subseteq [a, b]R\subseteq L+aL$, as desired.
\end{proof}

\begin{cor}\label{cor8}
Let $R$ be a semiprime ring, and let $L$ be a Lie ideal of $R$ such that $[a, L]\ne 0$ for some $a\in L$.
Then $L+aL$ contains a nonzero ideal of $R$.
\end{cor}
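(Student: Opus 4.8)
The plan is to adapt Case~1 of the proof of Theorem~\ref{thm16}, which already disposes of the prime case; the only genuinely new ingredient will be a nonvanishing statement that primeness made automatic. Since $a\in L$, the hypothesis $[a,L]\ne 0$ forces $[L,L]\ne 0$, so $L$ is not abelian, and by Theorem~\ref{thm3} the set $I:=\widetilde R[L,L^2]\widetilde R$ is a nonzero two-sided ideal of $R$ with $I\subseteq L^2$. I would then repeat, word for word, the computation from Case~1 of Theorem~\ref{thm16}: the identity $[a,\ell]r=[ar,\ell]-a[r,\ell]$ gives $[a,L]R\subseteq L+aL$, and $\big[L^2,[a,L]R\big]\subseteq[\overline L,R]=[L,R]\subseteq L$ by Lemma~\ref{lem8}(i), whence $L^2[a,L]R\subseteq L+aL$ and therefore $I[a,L]R\subseteq L+aL$. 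Since $I$ is a two-sided ideal, left multiplication by $R$ stays inside $I[a,L]R$, so $I[a,L]R$ is a two-sided ideal of $R$. Thus the corollary reduces to proving $I[a,L]R\ne 0$; and since $XR=0$ forces $X=0$ in a semiprime ring, this is the same as proving $I[a,L]\ne 0$.

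The nonvanishing $I[a,L]\ne 0$ is the heart of the matter and the step I expect to be the main obstacle, because in Theorem~\ref{thm16} it was immediate from primeness. I would argue by contradiction. Suppose $I[a,L]=0$. Then $[a,L]\subseteq\operatorname{ann}(I)=:A$, where, in a semiprime ring, the left and right annihilators of a two-sided ideal coincide and form a two-sided ideal $A$ with $A\cap\operatorname{ann}(A)=0$; moreover $A\ne 0$, since it contains the nonzero set $[a,L]$. From $AI=IA=0$ we get $I\subseteq\operatorname{ann}(A)$, so the image of $I$ in the semiprime quotient ring $\overline R:=R/\operatorname{ann}(A)$ is zero; but that image is exactly the ideal of $\overline R$ generated by $[\overline L,\overline L^2]$, so $[\overline L,\overline L^2]=0$. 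In particular $[\overline a,\overline L^2]=0$ because $\overline a\in\overline L$, and Lemma~\ref{lem6}, applied in $\overline R$, gives $[\overline a,\overline L]=0$, i.e.\ $[a,L]\subseteq\operatorname{ann}(A)$. Hence $[a,L]\subseteq A\cap\operatorname{ann}(A)=0$, contradicting $[a,L]\ne 0$. Therefore $I[a,L]\ne 0$, and $L+aL$ contains the nonzero ideal $I[a,L]R$ of $R$, which finishes the proof.

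Beyond the computation borrowed from Theorem~\ref{thm16} and Lemma~\ref{lem6}, the argument uses only two standard facts about a semiprime ring $R$, which I would either cite or verify in one line: for every two-sided ideal $B$, $\operatorname{ann}(B)$ is a two-sided ideal with $B\cap\operatorname{ann}(B)=0$; and $R/\operatorname{ann}(B)$ is again semiprime, because if $J$ is an ideal with $J^2\subseteq\operatorname{ann}(B)$ then $(JB)^2\subseteq J^2B=0$, so $JB=0$ and likewise $BJ=0$, giving $J\subseteq\operatorname{ann}(B)$. There is also a purely computational route to $I[a,L]\ne 0$ — assuming $I[a,L]=0$, one uses $a[a,\ell]=[a,a\ell]\in[L,L^2]\subseteq I$ (and symmetrically $[a,\ell]a\in I$) together with $I\cap\operatorname{ann}(I)=0$ to deduce $a[a,L]=[a,L]a=0$, and then $[a^2,R]=0$ — but passing to $R/\operatorname{ann}(\operatorname{ann}(I))$ seems shorter and cleaner. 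Finally, the displayed conclusion $0\ne I[a,L]R\subseteq L+aL$ specializes to Case~1 of Theorem~\ref{thm16} when $R$ is prime, so the corollary is a genuine common extension of it.
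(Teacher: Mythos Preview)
Your argument is correct. The containment $I[a,L]R\subseteq L+aL$ with $I=\widetilde R[L,L^2]\widetilde R$ is exactly what the paper establishes (their $N$), and your verification that $I[a,L]R$ is a nonzero two-sided ideal goes through as written: the annihilator manipulations are sound, $R/\operatorname{ann}(A)$ is indeed semiprime by the one-line check you give, the image of $I$ there is the ideal generated by $[\overline L,\overline L^2]$, and Lemma~\ref{lem6} then forces $[a,L]\subseteq A\cap\operatorname{ann}(A)=0$.

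The paper's own proof reaches the same containment and then proves nonvanishing by a different mechanism: rather than passing to the single semiprime quotient $R/\operatorname{ann}(A)$, it reduces modulo an arbitrary prime ideal $P$ and uses primeness of $R/P$ to split the product $R[a,L^2]R\cdot[a,L]R$ into two cases, each of which yields $[\hat a,\widehat L]=0$ via Lemma~\ref{lem6}; semiprimeness (intersection of prime ideals is zero) then gives $[a,L]=0$. So both proofs hinge on Lemma~\ref{lem6} applied in a quotient, but the paper uses the subdirect-product-of-primes viewpoint while you use annihilator ideals directly. Your route avoids invoking the prime decomposition and is self-contained once the two standard annihilator facts are granted; the paper's route is perhaps more in keeping with how the rest of the article systematically reduces semiprime questions to prime ones. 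Neither has a real advantage in length or difficulty here.
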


\begin{proof}
It follows from Eq.\eqref{eq:11} that
$L^2[a, L]R\subseteq L+aL$. Hence, by Theorem \ref{thm3}, we get $\widetilde{R}[L,L^2]\widetilde{R} \subseteq L^2$ and so
$$
N:=\widetilde{R}[L, L^2]\widetilde{R}[a, L]R\subseteq L+aL.
$$
To prove $N$ to be nonzero, it suffices to claim that $N_0:=R[a, L^2]R[a, L]R\ne 0$ as $N_0\subseteq N$. Otherwise, $N_0=0$ follows.

Let $P$ be a prime ideal of $R$, and let ${\widehat R}:=R/P$. We write $\hat x=x+P\in \widehat R$. Since $N_0=0$, we get
$$
{\widehat R}[\hat a, {\widehat L}^2]{\widehat R}[\hat a, {\widehat L}]{\widehat R}=\hat 0,
$$
where $\widehat L=L+P/P\subseteq {\widehat R}$. By the primeness of ${\widehat R}$, we get that either $[\hat a, {\widehat L}^2]=\hat 0$
or $[\hat a, {\widehat L}]=\hat 0$. By Lemma \ref{lem6}, the first case also implies that $[\hat a, {\widehat L}]=\hat 0$.

Since $P$ is an arbitrary prime ideal of $R$, the semiprimeness of $R$ implies that $[a, L]=0$, a contradiction. This completes the proof.
\end{proof}

\begin{remark}\label{remark1}
{\rm In Theorem \ref{thm16} we cannot conclude that $[L, L]+a[L, L]$ contains a nonzero ideal of $R$.
For instance, let $R$ be an exceptional prime ring, and let $L:=[R, R]$. Then $0\ne [L, L]\subseteq Z(R)$ (see Lemma \ref{lem7}). Therefore, given any $a\in R$,
$[L, L]+a[L, L]$ does not contain any nonzero ideal of $R$.}
\end{remark}

\begin{thm}\label{thm17}
Let $R$ be a prime ring with a Lie ideal $L$, and $a\in R\setminus Z(R)$.
If $[L, L]=[L, R]\ne 0$, then $[L, L]+a[L, L]$ contains a nonzero ideal of $R$.
\end{thm}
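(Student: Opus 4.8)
The plan is to set $M:=[L,L]=[L,R]$, to show that the Lie ideal $M$ is \emph{not} abelian, and then to apply Theorem~\ref{thm16} (Case~1) to $M$ and $a$. Once $M$ is known to be non-abelian it is in particular noncentral, and since $a\in R\setminus Z(R)$, Theorem~\ref{thm16} yields that $M+aM=[L,L]+a[L,L]$ contains a nonzero ideal of $R$, which is exactly the assertion. Thus everything reduces to proving that $M$ cannot be abelian; this is precisely where the hypothesis $[L,L]=[L,R]$ is used, since by Remark~\ref{remark1} the choice $L=[R,R]$ in an exceptional ring makes $[L,L]$ abelian (indeed central) while there $[L,L]\ne[L,R]$.

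Assume, towards a contradiction, that $M$ is abelian. I would first check that $M$ is noncentral. If $M\subseteq Z(R)$, then for each $\ell\in L$ one has $[\ell,R]\subseteq[L,R]=M\subseteq Z(R)$, so in the Jacobi identity $[[\ell,x],y]+[[x,y],\ell]+[[y,\ell],x]=0$ the outer two terms vanish (both $[\ell,x]$ and $[\ell,y]$ lie in $Z(R)$), giving $[\ell,[R,R]]=0$ and hence $\ell\in Z(R)$ by Lemma~\ref{lem8}(ii); then $L\subseteq Z(R)$, so $M=[L,R]=0$, contradicting $M\ne0$. Hence $M$ is a noncentral abelian Lie ideal, and Lemma~\ref{lem131} gives that $R$ is exceptional and $\dim_CMC=2$.

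I would then pass to $RC$, a $4$-dimensional central simple (hence exceptional prime) $C$-algebra, and classify the Lie ideal $LC$ by Theorem~\ref{thm19} (equivalently Theorem~\ref{thm20}). First note that $\dim_C[RC,RC]=3$: it is nonzero ($RC$ is noncommutative), it is not all of $RC$ (else $[[RC,RC],[RC,RC]]=RC\not\subseteq Z(RC)$, contradicting Lemma~\ref{lem7}), it is not a line (apply Lemma~\ref{lem14} to a noncentral spanning element), and it is not a plane (a two-dimensional Lie ideal of $RC$ equals $Ca+C$ by Lemma~\ref{lem131}, forcing $[[RC,RC],[RC,RC]]=0$, again against Lemma~\ref{lem7}). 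Now Theorem~\ref{thm19} leaves three cases for $LC$. If $LC\subseteq Z(RC)$, then $MC=[L,R]C=[LC,RC]=0$; if $LC=Ca'+C$ with $\dim_CLC=2$, then $[L,L]\subseteq[LC,LC]=0$; both contradict $M\ne0$. If $[RC,RC]\subseteq LC$, then $\dim_CLC\in\{3,4\}$: for $\dim_CLC=3$ we get $LC=[RC,RC]$, whence $[L,L]\,C\subseteq[[RC,RC],[RC,RC]]$ has $C$-dimension at most $1$; for $LC=RC$ we get $[L,L]\,C=[RC,RC]$ of $C$-dimension $3$; in either case this contradicts $\dim_C(MC)=\dim_C([L,L]\,C)=2$, using the identity $[XC,YC]=[X,Y]\,C$ valid for additive subgroups $X,Y\subseteq R$. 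Since every case is impossible, $M$ is not abelian, and the proof concludes as in the first paragraph.

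The sole genuine obstacle is this last step, i.e.\ ruling out that $[L,L]$—when it coincides with the larger group $[L,R]$ and is nonzero—could be an abelian Lie ideal. Everything there is finite-dimensional bookkeeping inside $RC$ using Lemmas~\ref{lem7}, \ref{lem14}, \ref{lem131} and Theorem~\ref{thm19}; the only conceptual input is the observation that the extra hypothesis $[L,L]=[L,R]$ prevents $[L,L]$ from being absorbed into a $2$-dimensional (abelian) Lie ideal of $RC$, which is exactly the obstruction present in Remark~\ref{remark1}.
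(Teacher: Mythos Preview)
Your proof is correct and shares the paper's overall plan---show that $M:=[L,L]$ is a non-abelian Lie ideal and then invoke Theorem~\ref{thm16} (Case~1)---but the argument for non-abelianness is genuinely different. The paper does not pass to $RC$ or perform any dimension count; instead it notes that since $[L,L]\ne 0$ the Lie ideal $L$ itself is non-abelian, so Theorem~\ref{thm3} supplies a nonzero ideal $I\subseteq L^2$, and then Lemma~\ref{lem8}(i) together with the hypothesis $[L,R]=[L,L]$ gives
\[
[I,I]\subseteq[I,R]\subseteq[L^2,R]\subseteq[\overline{L},R]=[L,R]=[L,L]=M.
\]
Since $I$ is a noncommutative prime ring, $\big[[I,I],[I,I]\big]\ne 0$ by Lemma~\ref{lem8}(ii), so $M$ cannot be abelian. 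This is shorter and makes the role of the hypothesis $[L,L]=[L,R]$ visible in a single line (it is exactly what lets $[I,I]$ land inside $M$). Your route, by contrast, localises the obstruction in the exceptional case and eliminates it by explicit dimension bookkeeping via Theorem~\ref{thm19} and Lemma~\ref{lem7}; it is longer, but it gives a concrete picture of why no two-dimensional abelian $MC$ is compatible with $[L,L]=[L,R]$.
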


\begin{proof}
By the fact that $[L, L]=[L, R]\ne 0$, $[L, L]$ is a noncentral Lie ideal of $R$.
Since $L$ is not abelian, it follows from Theorem \ref{thm3} that $I\subseteq L^2$ for some nonzero ideal $I$ of $R$.
Therefore, by Lemma \ref{lem8} (i), we have
$$
[I, I]\subseteq [I, R]\subseteq [L^2, R]\subseteq [\overline L, R]=[L, R]=[L, L].
$$
As a nonzero ideal of a noncommutative prime ring $R$, $I$ itself is a noncommutative prime ring. By Lemma \ref{lem8} (ii),
it follows that $\big[[I, I], [I, I]\big]\ne 0$. Thus the Lie ideal $[L, L]$ is not abelian. Applying Theorem \ref{thm16}, we conclude that
$[L, L]+a[L, L]$ contains a nonzero ideal of $R$.
\end{proof}

Let $R$ be a ring. We let $E(R)$ stand for the additive subgroup of $R$ generated by all idempotents of $R$.
It is well-known that $E(R)$ is a Lie ideal of $R$ and $[E(R), E(R)]=[E(R), R]$ (see \cite[Lemma 3.12]{calugareanu2024}).
In particular, if $R$ is a prime ring containing a nontrivial idempotent, then $E(R)\nsubseteq Z(R)$ and so $[E(R), E(R)]\ne 0$.
Therefore, the following is an immediate consequence of Theorem \ref{thm17}.

\begin{cor}\label{cor6}
Let $R$ be a prime ring  containing a nontrivial idempotent, and $a\in R\setminus Z(R)$. Then $[E(R), E(R)]+a[E(R), E(R)]$ contains a nonzero ideal of $R$.
\end{cor}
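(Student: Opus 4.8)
The plan is to obtain this as a direct specialisation of Theorem~\ref{thm17}, applied with the Lie ideal $L=E(R)$. Accordingly, the task reduces to checking that $E(R)$ satisfies the hypotheses of that theorem: that $E(R)$ is a Lie ideal of $R$, that $[E(R),E(R)]=[E(R),R]$, and that this common additive subgroup is nonzero. The element $a\in R\setminus Z(R)$ needed to invoke Theorem~\ref{thm17} is already supplied in the statement.

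The first two facts are precisely those recalled in the paragraph preceding this corollary: $E(R)$ is a Lie ideal of $R$ and $[E(R),E(R)]=[E(R),R]$, by \cite[Lemma 3.12]{calugareanu2024}. For the nonvanishing I would use that a nontrivial idempotent of a prime ring is noncentral, hence $E(R)\nsubseteq Z(R)$. The short verification is that if the idempotent $e$ were central, then using $e^2=e$ one gets $eR(r-re)=0$ for every $r\in R$; since $R$ is prime and $e\ne 0$, this forces $r=re$, and symmetrically $r=er$, for all $r\in R$, so $e$ is the identity of $R$ — impossible for a nontrivial idempotent. (Alternatively, one may simply quote the assertion already made in the text that $E(R)\nsubseteq Z(R)$ whenever $R$ is prime and contains a nontrivial idempotent.) Consequently $[E(R),R]\ne 0$, and therefore $[E(R),E(R)]=[E(R),R]\ne 0$.

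Having verified all the hypotheses of Theorem~\ref{thm17} for $L=E(R)$, I would conclude at once that $[E(R),E(R)]+a[E(R),E(R)]$ contains a nonzero ideal of $R$, which is exactly the assertion. I do not anticipate any genuine obstacle here: the corollary is an immediate consequence of Theorem~\ref{thm17}, and the only step that calls for a line of justification is the noncentrality of a nontrivial idempotent in a prime ring, a standard observation.
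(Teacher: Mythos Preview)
Your proposal is correct and follows exactly the paper's approach: the corollary is obtained as an immediate consequence of Theorem~\ref{thm17} applied with $L=E(R)$, after noting (as the paper does in the paragraph preceding the statement) that $E(R)$ is a Lie ideal with $[E(R),E(R)]=[E(R),R]\ne 0$. Your added justification that a nontrivial idempotent in a prime ring is noncentral is a welcome bit of detail that the paper leaves implicit.
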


\begin{cor}\label{cor5}
Let $R$ be a prime ring, $a\in R\setminus Z(R)$. Then $[R, R]+a[R, R]$ contains a nonzero ideal of $R$.
\end{cor}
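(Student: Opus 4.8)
The plan is to obtain this as an immediate special case of Theorem~\ref{thm17}. First I would observe that the hypothesis $a\notin Z(R)$ forces $R$ to be noncommutative, so $[R,R]\ne 0$. Now apply Theorem~\ref{thm17} with the Lie ideal $L:=R$: here $[L,L]=[R,R]$ and $[L,R]=[R,R]$, so the required hypothesis $[L,L]=[L,R]\ne 0$ holds trivially, and the conclusion of Theorem~\ref{thm17} reads precisely that $[L,L]+a[L,L]=[R,R]+a[R,R]$ contains a nonzero ideal of $R$. This completes the argument.

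If one prefers a route that does not pass through Theorem~\ref{thm17}, one can instead invoke Case~1 of Theorem~\ref{thm16} directly with $L:=[R,R]$. Since $R$ is a noncommutative prime ring, $[R,R]$ is a noncentral Lie ideal, and by Lemma~\ref{lem8}(ii) (as in the proof of Theorem~\ref{thm20}) we have $\big[[R,R],[R,R]\big]\ne 0$, so $[R,R]$ is moreover not abelian. As $a\notin Z(R)$, Case~1 of Theorem~\ref{thm16} applied to this Lie ideal yields that $[R,R]+a[R,R]$ contains a nonzero ideal of $R$.

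There is essentially no obstacle here: the statement is a formal consequence of the already-established Theorems~\ref{thm16} and~\ref{thm17}. The only point deserving a word of justification is the standard fact that a noncommutative prime ring satisfies $[R,R]\nsubseteq Z(R)$ (equivalently $\big[[R,R],[R,R]\big]\ne 0$), which is read off from Lemma~\ref{lem8}(ii) exactly as in the proof of Theorem~\ref{thm20}.
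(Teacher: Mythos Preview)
Your proposal is correct, and your second route---applying Case~1 of Theorem~\ref{thm16} to the Lie ideal $L=[R,R]$, after observing via Lemma~\ref{lem8}(ii) that $[R,R]$ is not abelian---is exactly the paper's own proof. Your first route via Theorem~\ref{thm17} with $L=R$ is also valid but is just a repackaging: the proof of Theorem~\ref{thm17} itself reduces to Case~1 of Theorem~\ref{thm16}, so nothing is genuinely different.
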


\begin{proof}
Since $R$ is a noncommutative prime ring, we see that $[R, R]$ is a Lie ideal of $R$, which is not abelian.
By Theorem \ref{thm16}, $[R, R]+a[R, R]$ contains a nonzero ideal of $R$.
\end{proof}

\section{Theorem C}
We begin with a characterization of noncentral Lie ideals $K, L$ of a prime ring $R$ satisfying $[K, L]=0$.

\begin{lem}\label{lem133}
Let $R$ be a prime ring with noncentral Lie ideals $K, L$. If $[K, L]=0$ then
$KC=LC=Ca+C$ for any $a\in L\setminus Z(R)$.
\end{lem}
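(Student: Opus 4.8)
The plan is to show that $[K, L] = 0$ forces both $K$ and $L$ to be noncentral \emph{abelian} Lie ideals, and then to invoke Lemma \ref{lem131} to pin down their structure. First I would record the immediate consequence that $[K, K] = 0$: since $K \subseteq \mathfrak{C}_R(L)$ and $[L, R] \subseteq L$, one gets $[K, [L, R]] = 0$, hence $[[K, R], L] \subseteq [K, [L, R]]^{\text{(antisym.)}} = 0$ by the Jacobi identity (more carefully, $[[K,L],R] = 0$ trivially and $[[K,R],L] + [[R,L],K] + [[L,K],R] = 0$ gives $[[K,R],L] = -[[R,L],K] = [L\text{-stuff}, K]$ which lies in $[L,K] = 0$). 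Thus $[K, R] \subseteq \mathfrak{C}_R(L)$, and since $K \subseteq L$... wait — $K$ and $L$ need not be comparable, so instead: from $[K,L]=0$ and $L$ noncentral, I claim $K$ is abelian. Indeed $[[K,K], R]$: using that $[K,R] \subseteq K$ (Lie ideal) we have $[[K,K],L] = 0$, and more usefully $[K, [K, L]] = [K, 0] = 0$ together with the Jacobi identity yields $[[K,K], L] = 0$, so $[K,K] \subseteq \mathfrak{C}_R(L)$; but also $[K,K] \subseteq K \subseteq \mathfrak{C}_R(L)$ already — this is circular. The clean route: apply Lemma \ref{lem5} directly. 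Since $[K, L] = 0 \subseteq Z(R)$ and neither $K$ nor $L$ is central, Lemma \ref{lem5} (Lanski--Montgomery) is contradicted \emph{unless $R$ is exceptional}. So $R$ is exceptional.

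Once $R$ is known to be exceptional, I would next show $K$ and $L$ are both abelian. Take any $a \in L \setminus Z(R)$. Then $[K, a] = 0$, i.e. $K \subseteq \mathfrak{C}_{RC}(a) = Ca + C$ by Lemma \ref{lem9}; hence $KC \subseteq Ca + C$, so $\dim_C KC \leq 2$. Since $K$ is noncentral, $\dim_C KC \geq 2$, giving $\dim_C KC = 2$, and by Lemma \ref{lem131}(iii)$\Rightarrow$(i), $K$ is abelian and $KC = Ca + C$ for any noncentral $a \in K$. Symmetrically, fixing a noncentral $a \in L$: we want $LC = Ca + C$ too. For that, pick a noncentral $c \in K$; then $[c, L] = 0$ gives $LC \subseteq \mathfrak{C}_{RC}(c) = Cc + C$, so $\dim_C LC \leq 2$, hence $= 2$, hence $L$ is abelian and $LC = [a, RC] = Ca + C$ for any noncentral $a \in L$ by Lemma \ref{lem131}.

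It remains to identify $KC$ with $LC$. We have $KC = Cc + C$ and $LC = Ca + C$ where $c \in K \setminus Z(R)$, $a \in L \setminus Z(R)$. From $[K, L] = 0$ in particular $[c, a] = 0$, so $c \in \mathfrak{C}_{RC}(a) = Ca + C = LC$; thus $c \in LC \setminus C$, and since $LC$ is a two-dimensional Lie ideal of $RC$ with $LC = [c, RC] = Cc + C$ (applying Lemma \ref{lem131}(ii) with the noncentral element $c$ of the abelian Lie ideal $LC$ of... ), we conclude $KC = Cc + C = LC$. Cleaner: $LC = Cc + C$ since $c \in LC \setminus C$ and $\dim_C LC = 2$; and $KC = Cc + C$ as well; hence $KC = LC = Ca + C$ for any noncentral $a \in L$, using again that any noncentral element generates the same two-dimensional space (any noncentral $a \in LC$ has $LC \supseteq Ca + C$, and dimension count gives equality).

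I expect the main obstacle to be organizing the symmetry cleanly without circularity: the two conditions "$K$ abelian" and "$L$ abelian" each look like they need the other, and the resolution is to feed a \emph{single} noncentral element of the opposite ideal through Lemma \ref{lem9} to bound the dimension, then let Lemma \ref{lem131} do the rest. A minor technical point to get right is that Lemma \ref{lem9} is stated for $RC$, so one must pass to $RC$ and note $K \subseteq KC$, $L \subseteq LC$ and that $[K,L]=0$ implies $[KC, LC] = 0$; this is routine since $C$ centralizes everything.
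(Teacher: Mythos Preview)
Your proposal is correct and follows essentially the paper's approach: first deduce that $R$ is exceptional via Lemma~\ref{lem5}, then show both $K$ and $L$ are abelian, and invoke Lemmas~\ref{lem131} and~\ref{lem9} to pin down $KC=LC=Ca+C$. The one difference is in how abelianness is established. The paper argues by contradiction: if $K$ is not abelian then $\dim_C KC\geq 3$ by Lemma~\ref{lem131}, so $[RC,RC]\subseteq KC$ by Theorem~\ref{thm19}, forcing $\big[[RC,RC],L\big]=0$ and hence $L$ central. You instead use Lemma~\ref{lem9} up front: for any $a\in L\setminus Z(R)$ the hypothesis $[K,a]=0$ gives $KC\subseteq \mathfrak{C}_{RC}(a)=Ca+C$, so $\dim_C KC\leq 2$ immediately. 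Your route is slightly more direct and avoids the appeal to Theorem~\ref{thm19}; both finish identically via Lemma~\ref{lem9} to identify $KC$ with $LC$. (The Jacobi-identity manipulations in your first paragraph lead nowhere and should simply be deleted in favor of the direct appeal to Lemma~\ref{lem5}. Your third paragraph is also redundant: once you have $KC\subseteq Ca+C$ with $\dim_C KC=2$ you already have $KC=Ca+C$, and $LC=Ca+C$ follows from Lemma~\ref{lem131} as soon as $L$ is known to be abelian.)
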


\begin{proof}
Since $K$ and $L$ are noncentral Lie ideals of the prime ring $R$ satisfying $[K, L]=0$, it follows from Lemma \ref{lem5} that
$R$ is exceptional.

If $K$ is not abelian, by Lemma \ref{lem131} we have $\dim_CKC\geq 3$ and so
$[RC, RC]\subseteq KC$ (see Theorem \ref{thm19}). Thus
$$
\big[[RC, RC], L]=0.
$$
By Lemma \ref{lem8} (ii), $L\subseteq Z(R)$, a contradiction.
That is, $K$ is abelian and, similarly, so is $L$.

In view of Lemma \ref{lem131}, for any $a\in L\setminus Z(R)$ and $b\in K\setminus Z(R)$ we have
$$
LC=Ca+C\ \text{\rm and}\ KC=Cb+C.
$$
Hence $[a, b]=0$. Then $b\in \mathfrak{C}_{RC}(a)=Ca+C$ (see Lemma \ref{lem9}). This proves that $KC=LC=Ca+C$  for any $a\in L\setminus Z(R)$.
\end{proof}

\begin{cor}\label{cor7}
Let $R$ be an exceptional prime ring with noncentral Lie ideals $K, L$. Then $[K, L]\subseteq Z(R)$ iff $K, L\subseteq [RC, RC]$.
\end{cor}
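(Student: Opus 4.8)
The plan is to prove the two implications separately; the ``if'' direction is immediate from Lemma~\ref{lem7}, while the forward direction carries the content.

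For the ``if'' direction, suppose $K,L\subseteq[RC,RC]$. The ring $RC$ lies between $R$ and $Q_s(R)$, so it is again prime with center $C$, and it is exceptional since $\operatorname{char}RC=2$ and $\dim_C RC=4$. Hence Lemma~\ref{lem7}, applied to $RC$, gives $\big[[RC,RC],[RC,RC]\big]\subseteq C$, so $[K,L]\subseteq\big[[RC,RC],[RC,RC]\big]\subseteq C$; as $[K,L]\subseteq R$ and $Z(R)=R\cap C$, we conclude $[K,L]\subseteq Z(R)$.

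For the forward direction, assume $[K,L]\subseteq Z(R)\subseteq C$. Since $C$ is central in $RC$, this improves to $[KC,LC]\subseteq C$. By symmetry it is enough to show $L\subseteq[RC,RC]$, and I would split according to whether $L$ is abelian. If $L$ is abelian, Lemma~\ref{lem131}(ii) gives $LC=[b,RC]\subseteq[RC,RC]$ for a noncentral $b\in L$, and we are done. If $L$ is not abelian, Theorem~\ref{thm19} gives $[RC,RC]\subseteq LC$, and it remains to upgrade this to an equality; this is exactly where the hypothesis is used. If $LC=RC$, then choosing a noncentral $k\in K$ (which exists since $K$ is noncentral) we would get $[k,RC]=[k,LC]\subseteq[KC,LC]\subseteq C$, whence $\dim_C[k,RC]\le1$, contradicting Lemma~\ref{lem14}. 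Hence $LC\subsetneq RC$.

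To close the argument I would record that $\dim_C[RC,RC]=3$: the Lie ideal $[RC,RC]$ of $RC$ is noncentral (Lemma~\ref{lem8}(ii), as $RC$ is noncommutative), not abelian (Lemma~\ref{lem7} gives $\big[[RC,RC],[RC,RC]\big]\ne0$), and proper (otherwise $\big[[RC,RC],[RC,RC]\big]=[RC,RC]\not\subseteq C$), so by Lemma~\ref{lem131} its $C$-dimension is at least $2$, is not $2$, and is not $4$, hence equals $3$. Combining $[RC,RC]\subseteq LC\subsetneq RC$ with $\dim_C RC=4$ forces $LC=[RC,RC]$, so $L\subseteq[RC,RC]$; exchanging the roles of $K$ and $L$ gives $K\subseteq[RC,RC]$. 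The main obstacle is precisely this dimension bookkeeping in the non-abelian case, together with the care needed to ensure that each appeal to Lemmas~\ref{lem7}, \ref{lem131}, and \ref{lem14} is made for $RC$, a simple exceptional ring with center $C$; the conceptual content is slight, namely that $[K,L]\subseteq Z(R)=R\cap C$ says $K$ and $L$ commute modulo $C$, and Lemma~\ref{lem14} then prevents a noncentral Lie ideal from centralizing all of $RC$ modulo $C$.
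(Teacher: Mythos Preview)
Your proof is correct but follows a genuinely different route from the paper's. The paper argues elementwise: for each $a\in K$, either $[a,L]=0$ or some $[a,\ell]$ is a nonzero central element, and in the latter case Lemma~\ref{lem11} forces $a\in[RC,RC]$. Thus $K$ is the union of the two additive subgroups $K\cap[RC,RC]$ and $\{a\in K:[a,L]=0\}$, so one of them equals $K$; if $[K,L]=0$, Lemma~\ref{lem133} and Lemma~\ref{lem135} finish. Your argument is structural instead: you invoke the trichotomy of Theorem~\ref{thm19} for $LC$, dispose of the abelian case via Lemma~\ref{lem131}, and in the non-abelian case use the hypothesis together with Lemma~\ref{lem14} to rule out $LC=RC$, then pin down $\dim_C[RC,RC]=3$ to squeeze $LC=[RC,RC]$. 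The paper's approach is shorter and avoids both Theorem~\ref{thm19} and the dimension computation for $[RC,RC]$, at the price of the additive ``union of two subgroups'' trick; your approach avoids Lemma~\ref{lem11} entirely and is more geometric, but leans on more machinery. One small imprecision: the claim $\dim_C[RC,RC]\ge 2$ is not part of Lemma~\ref{lem131} as you suggest; it follows because $[RC,RC]$ is noncentral, so for any noncentral $a\in[RC,RC]$ one has $[a,RC]\subseteq[RC,RC]$ and Lemma~\ref{lem14} gives $\dim_C[a,RC]>1$.
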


\begin{proof}
Assume that $K, L\subseteq [RC, RC]$. In view of Lemma \ref{lem7}, $[KC, LC]\subseteq C$ and so $[K, L]\subseteq Z(R)$.

Conversely, assume that $[K, L]\subseteq Z(R)$. Let $a\in K$. If $0\ne [a, L]\subseteq Z(R)$, then, by Lemma \ref{lem11}, we see that $a\in [RC, RC]$. That is, either $a\in [RC, RC]$ or $[a, L]=0$. Hence $K$ is the union of two additive subgroups $K\cap [RC, RC]$ and $\{a\in K\mid [a, L]=0\}$.
Thus either $K\subseteq [RC, RC]$ or $[K, L]=0$. Similarly, either $L\subseteq [RC, RC]$ or $[K, L]=0$.
That is, either $K, L\subseteq [RC, RC]$ or $[K, L]=0$.

The latter case implies that $KC=LC=Cb+C$ for any $b\in L\setminus Z(R)$ (see Lemma \ref{lem133}).
By Lemma \ref{lem135}, we have $KC=LC\subseteq  [RC, RC]$, as desired.
\end{proof}

We now prove the main theorem (i.e., Theorem C) in this section.

\begin{thm}\label{thm2}
Let $R$ be a prime ring with noncentral Lie ideals $K, L$, and positive integers $m, n$. Then the following are equivalent:

(i)\ $[K^m, L^n]=0$;

(ii)\ $[K, L]=0$;

 (iii)\ $R$ is exceptional and $KC=LC=Ca+C$ for any noncentral element $a\in L$.
\end{thm}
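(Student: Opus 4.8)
The plan is to prove the cycle of implications (ii) $\Rightarrow$ (i) $\Rightarrow$ (ii) $\Rightarrow$ (iii) $\Rightarrow$ (ii), or rather to organize it as (ii) $\Rightarrow$ (i) is trivial, (i) $\Rightarrow$ (ii) is essentially Corollary \ref{cor2}, and the real content is the equivalence of (ii) and (iii). So the first observation is that (ii) $\Leftrightarrow$ (i) is already in hand: if $[K,L]=0$ then certainly $[K^m,L^n]=0$, and conversely Corollary \ref{cor2} gives $[K^m,L^n]=0 \Rightarrow [K,L]=0$. This disposes of the hard-looking part of the statement with no new work.

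Next I would prove (ii) $\Rightarrow$ (iii). Suppose $[K,L]=0$. Since $K,L$ are noncentral Lie ideals of the prime ring $R$, Lemma \ref{lem5} forces $R$ to be exceptional. The claim $KC=LC=Ca+C$ for any noncentral $a\in L$ is then exactly the conclusion of Lemma \ref{lem133}, which was proved precisely under these hypotheses. So this implication reduces immediately to Lemma \ref{lem133} together with Lemma \ref{lem5}.

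Finally, (iii) $\Rightarrow$ (ii). Assume $R$ is exceptional and that $KC=LC=Ca+C$ for every noncentral $a\in L$. Pick a noncentral $a\in L$ (possible since $L$ is noncentral) and a noncentral $b\in K$ (possible since $K$ is noncentral, and $KC=Ca+C$ has $C$-dimension $2$, so $K\not\subseteq Z(R)$ is consistent). Since $b\in K\subseteq KC=Ca+C$, write $b=\alpha a+\beta$ with $\alpha,\beta\in C$; then $[a,b]=\alpha[a,a]=0$, hence $[a,b]=0$ for this particular choice. More generally every element of $L$ lies in $LC=Ca+C$ and every element of $K$ lies in $KC=Ca+C$, so any $u\in K$ and $v\in L$ have the form $u=\alpha_1 a+\beta_1$, $v=\alpha_2 a+\beta_2$ with $\alpha_i,\beta_i\in C$, whence $[u,v]=\alpha_1\alpha_2[a,a]=0$. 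Therefore $[K,L]=0$, which is (ii). (One should note $C$ centralizes $R$ and lies in $Q_s(R)$, so these commutator computations are legitimate even though $\alpha_i,\beta_i$ need not lie in $R$; alternatively, work inside $RC$ throughout, using $KC, LC\subseteq RC$ and $[K,L]\subseteq [KC,LC]$.)

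I do not expect a genuine obstacle here: the theorem is a packaging of Corollary \ref{cor2}, Lemma \ref{lem5}, Lemma \ref{lem133}, and Lemma \ref{lem9}/\ref{lem131}, all already available. The only point requiring a little care is the direction (iii) $\Rightarrow$ (ii): one must make sure the membership $K\subseteq KC=Ca+C$ really yields vanishing commutators over $R$, which it does because the extended centroid $C$ is central in $Q_s(R)\supseteq R$; passing to $RC$ and using $[KC,LC]=0$ then restricting makes this airtight. A secondary triviality is checking that the hypothesis in (iii) is not vacuous, i.e.\ that $L$ (and $K$) do contain noncentral elements — but this is immediate from the standing assumption that $K,L$ are noncentral Lie ideals.
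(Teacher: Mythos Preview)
Your proposal is correct and follows essentially the same route as the paper's own proof: the equivalence (i) $\Leftrightarrow$ (ii) via Corollary \ref{cor2}, then (ii) $\Rightarrow$ (iii) via Lemma \ref{lem5} and Lemma \ref{lem133}, with (iii) $\Rightarrow$ (ii) being the easy direction. The paper dismisses (iii) $\Rightarrow$ (ii) in one word (``Clearly''), whereas you spell out the computation in $RC$; your added detail is accurate and the caution about working in $Q_s(R)$ is appropriate.
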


\begin{proof}
Applying Corollary \ref{cor2}, we get that $[K^m, L^n]=0$ iff $[K, L]=0$. Therefore, (i) is equivalent to (ii). Assume that (ii) holds.
It follows from Lemma \ref{lem133}  and Lemma \ref{lem5} that $R$ is exceptional and $KC=LC=Ca+C$ for any noncentral element $a\in L$. Thus, (iii) holds.
Clearly, (iii) implies (ii) and so the proof is complete.
\end{proof}

We end this section with an application, which characterizes noncentral Lie ideals $K, L, N$ of a prime ring $R$ satisfying $\big[K^m, [L^s, N^t]\big]=0$.

\begin{thm}\label{thm9}
Let $R$ be a prime ring with noncentral Lie ideals $K, L, N$. Suppose that $\big[K^m, [L^s, N^t]\big]=0$, where $m\geq 1, s>1, t>1$ are positive integers. Then $R$ is exceptional and the following hold:

(i)\ If $\dim_CKC\geq 3$, then $\dim_CLC=2=\dim_CNC$;

(ii)\ If $\dim_CKC=2$, then $KC=LC$, $KC=NC$, or $\dim_CLC=2=\dim_CNC$.
\end{thm}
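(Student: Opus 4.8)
Put $M:=[L^s,N^t]$, a Lie ideal of $R$; the strategy is a case split on $M$, feeding $K$ and $M$ into the structure theorems already at hand. Two preliminary remarks: from $[K^m,M]=0$ and Corollary~\ref{cor2} (with exponents $m$ and $1$) one gets $[K,M]=0$; and $K^m,L^s,N^t$ are all noncentral by Corollary~\ref{cor1}, while --- and here the hypotheses $s,t>1$ enter --- whenever $L$ (resp.\ $N$) fails to be abelian, $L^s$ (resp.\ $N^t$) contains a nonzero ideal of $R$ by Theorem~\ref{thm8}. Since in every case below $R$ turns out to be exceptional, $RC$ is then a simple ring, so such an ideal has $C$-span equal to $RC$; the tension between this ``largeness'' of non-abelian powers and the rigidity coming from Theorem~\ref{thm2} is what powers the proof.

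\emph{Case $M=0$.} Then $[L,N]=0$ by Corollary~\ref{cor2}, so $R$ is exceptional (Lemma~\ref{lem5}) and $LC=NC=Ca+C$ for any noncentral $a\in N$ (Lemma~\ref{lem133}); hence $\dim_C LC=2=\dim_C NC$, and both (i) and (ii) hold. \emph{Case $0\ne M\subseteq Z(R)$.} Then $L^s,N^t$ are noncentral with $[L^s,N^t]\subseteq Z(R)$, so $R$ is exceptional by Lemma~\ref{lem5}. If $L$ were not abelian, $L^s$ would contain a nonzero ideal $I$, and $[I,N^t]\subseteq Z(R)$ with $I,N^t$ noncentral would force $I\subseteq[RC,RC]$ by Corollary~\ref{cor7}; but $IC=RC$, so $RC=[RC,RC]$ and hence $RC=[[RC,RC],[RC,RC]]\subseteq C$ by Lemma~\ref{lem7}, contradicting $\dim_C RC=4$. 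Thus $L$, and symmetrically $N$, is abelian, so $\dim_C LC=2=\dim_C NC$ by Lemma~\ref{lem131}; again (i), (ii) hold.

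\emph{Case $M$ noncentral.} Here $K$ and $M$ are noncentral Lie ideals with $[K^m,M]=0$, so Theorem~\ref{thm2} gives $R$ exceptional and $KC=MC=Ca+C$ for any noncentral $a\in M$; in particular $\dim_C KC=2$, so (i) holds vacuously and only (ii) is at issue. I first claim $L$ or $N$ is abelian. If not, then $L^s\supseteq I$ and $N^t\supseteq J$ for nonzero ideals $I,J$ of $R$ (Theorem~\ref{thm8}); since $IC=JC=RC$, expressing elements of $RC=IC$ through $I$ yields $[RC,RC]=[RC,JC]=[RC,J]C\subseteq[I,J]C\subseteq MC$, which is absurd because $\dim_C[RC,RC]=3$ whereas $\dim_C MC=2$. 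So say $L$ is abelian; then $LC=[b,RC]=Cb+C$ for a noncentral $b\in L$ (Lemma~\ref{lem131}), and as $LC$ is a subalgebra of $RC$ we get $L^s\subseteq\overline{L}\subseteq LC$, hence $MC=[L^s,N^t]C\subseteq[LC,N^tC]\subseteq[b,RC]=LC$; since $KC=MC$ and $\dim_C KC=2=\dim_C LC$, this forces $KC=LC$. By the symmetry $M=[N^t,L^s]$, if instead $N$ is abelian then $KC=NC$. In every case one of the three alternatives of (ii) holds, which finishes the proof.

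The step I expect to be the crux is the $M$-noncentral case: it requires playing the dimension bound $\dim_C MC=2$ from Theorem~\ref{thm2} against the fact that non-abelian powers $L^s,N^t$ (and the ideals inside them) $C$-span $RC$, and catching the collision through $\dim_C[RC,RC]=3$. The abelian sub-case, by contrast, is soft, relying only on $LC=[b,RC]=Cb+C$ being multiplicatively closed, so that $L^s$ cannot escape it. Everything else is routine use of Corollaries~\ref{cor1}, \ref{cor2}, \ref{cor7} and Lemmas~\ref{lem5}, \ref{lem7}, \ref{lem131}, \ref{lem133}.
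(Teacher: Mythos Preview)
Your proof is correct, and it takes a genuinely different route from the paper's. The paper organizes the argument around the value of $\dim_C KC$: it first isolates a \emph{Claim 2} (if $[L^s,N^t]\subseteq Z(R)$ then $\dim_C LC=2=\dim_C NC$), then for (i) uses $[RC,RC]\subseteq KC$ to force $[L^s,N^t]\subseteq Z(R)$, and for (ii) argues that one of $L^s,N^t$ fails to contain a nonzero ideal, say $L^s$, whence $L^sC=LC=Cb+C$; assuming $KC\ne LC$ it then extracts $0\ne[a,b]\in Z(R)$ and runs an element-level computation, expanding $[a,[b,bx]]=0$ for $x$ in an ideal inside $N^t$, to reach a contradiction and conclude $\dim_C NC=2$.

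You instead split on the Lie ideal $M=[L^s,N^t]$ and let Theorem~\ref{thm2} do the heavy lifting in the noncentral case, immediately pinning $\dim_C KC=2$ and $KC=MC$. Your finish for (ii) is more structural: once some $L$ (say) is abelian, the containment $MC\subseteq[LC,N^tC]\subseteq[b,RC]=LC$ together with $\dim_C MC=2=\dim_C LC$ gives $KC=MC=LC$ directly, bypassing the paper's commutator calculation entirely. Your treatment of the central cases is also slightly different---where the paper deduces $N^t\subseteq Z(R)$ from $[I,N^t]\subseteq Z(R)$, you route through Corollary~\ref{cor7} and the impossibility of $RC=[RC,RC]$ in the exceptional setting. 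Both arguments are clean; yours is perhaps more conceptual in part (ii), trading an explicit identity for a dimension count, while the paper's organization makes the role of $\dim_C KC$ (the actual case distinction in the statement) more transparent.
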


\begin{proof}
Since $\big[K^m, [L^s, N^t]\big]=0$, it follows from Lemma \ref{lem6} that
\begin{eqnarray}
\big[K, [L^s, N^t]\big]=0.
\label{eq:1}
\end{eqnarray}

\noindent{\bf Claim 1}:\ $R$ is exceptional. Otherwise, by Lemma \ref{lem5}, we have $K\subseteq Z(R)$, $L^s\subseteq Z(R)$, or $N^t\subseteq Z(R)$.
The later two cases imply that either $L\subseteq Z(R)$ or $N\subseteq Z(R)$ (see Corollary \ref{cor1}), a contradiction. Hence $R$ is exceptional.\vskip4pt

\noindent{\bf Claim 2}:\ If $[L^s, N^t]\subseteq Z(R)$ then $\dim_CLC=2=\dim_CNC$. Suppose on the contrary that either $\dim_CLC>2$ or $\dim_CNC>2$.
Without loss of generality, we assume $\dim_CLC>2$. In view of Corollary \ref{cor4}, $L^s$ contains a nonzero ideal $I$ of $R$. Thus
$[I, N^t]\subseteq Z(R)$ and so $N^t\subseteq Z(R)$. This implies that $N\subseteq Z(R)$ (see  Corollary \ref{cor1}), a contradiction.  Therefore,
$\dim_CLC=2=\dim_CNC$.

(i)\ Assume that $\dim_CKC\geq 3$. Then $RC=KC+Cb$  for some $b\in RC$. In view of Lemma \ref{lem4}, we have $[RC, RC]\subseteq KC$.
By Eq.\eqref{eq:1}, we get $\big[[RC, RC], [L^s, N^t]\big]=0$.
This implies that $[L^s, N^t]\subseteq Z(R)$ (see Lemma \ref{lem8} (ii)). By Claim 2, $\dim_CLC=2=\dim_CNC$.

(ii)\ Assume that $\dim_CKC=2$. Then it follows from Lemma \ref{lem131} that $KC=[a, RC]=Ca+C$ for some $a\in K$. Since $K$ is noncentral in $R$, one of $L^s$ and $N^t$ cannot contain any nonzero ideal of $R$.
We assume without loss of generality that $L^s$ does not contain any nonzero ideal. In view of Corollary \ref{cor4}, we have $\dim_CLC=2$ and so $LC=[b, RC]=Cb+C$ for some $b\in L\setminus Z(R)$ (see Lemma \ref{lem131}). In this case, $L^sC=LC$. It follows from Eq.\eqref{eq:1} that
\begin{eqnarray}
\big[K, [L, N^t]\big]=0.
\label{eq:2}
\end{eqnarray}
We assume that $KC\ne LC$. By Theorem \ref{thm2} and Lemma \ref{lem7}, we have
$$
0\ne [K, L]\subseteq \big[[RC, RC], [RC, RC]\big]\subseteq C,
$$
implying $0\ne [a, b]\in Z(R)$.

Suppose first that $N^t$ contains a nonzero ideal, say $I$, of $R$. It follows from Eq.\eqref{eq:2} that
\begin{eqnarray}
\big[a, [b, I]\big]=0.
\label{eq:3}
\end{eqnarray}
In view of Eq.\eqref{eq:3}, we expand $\big[a, [b, bx]\big]=0$ for all $x\in I$. Since $0\ne [a, b]\in Z(R)$ and $[a, [b, x]]=0$, we have
$[a, b][b, x]=0$ and so $[b, x]=0$. That is, $[b, I]=0$ and so $b\in Z(R)$, a contradiction.

This means that $N^t$ does not contain any nonzero ideal of $R$. By Corollary \ref{cor4}, we have $NC=Cf+C$ for some noncentral $f\in N$.
Therefore, $\dim_CLC=2=\dim_CNC$, as desired.
\end{proof}

\section{Theorem D}

We extend Theorem \ref{thm3} to the product of two noncentral Lie ideals of a prime ring.

\begin{pro}\label{pro2}
Let $R$ be a prime ring with noncentral Lie ideals $K, L$. Then $KL$ contains a nonzero ideal of $R$
except when $KC=LC=Ca+C$ for some $a\in L$.
\end{pro}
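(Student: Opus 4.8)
The plan is to prove the contrapositive, reducing everything to the bracket $[K,L]$, and to dispose of the stated exceptional configuration directly.

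\emph{The exceptional configuration.} Suppose $KC=LC=Ca+C$ for some $a\in L$. Then $a\notin Z(R)$ (otherwise $K$ and $L$ would be central), so $\dim_CKC=\dim_CLC=2$, and Lemma~\ref{lem131} forces $K$ and $L$ to be abelian and $R$ to be exceptional. In the $4$-dimensional central simple algebra $RC$ one has $a^2\in Ca+C$, hence $KL\subseteq(KC)(LC)=(Ca+C)^2\subseteq Ca+C=LC$, a proper subspace of the simple algebra $RC$. Since any nonzero ideal $I$ of the prime ring $R$ satisfies $IC=RC$, the product $KL$ contains no nonzero ideal.

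\emph{Reduction.} Combining Lemma~\ref{lem133} with Lemma~\ref{lem5}, I record that $KC=LC=Ca+C$ for some $a\in L$ \emph{if and only if} $[K,L]=0$: the forward direction is Lemma~\ref{lem133}, and conversely if $KC=LC$ is a $2$-dimensional (hence, by Lemma~\ref{lem131}, abelian) Lie ideal, then $[K,L]\subseteq[KC,LC]=[KC,KC]=C[K,K]=0$. Hence it suffices to prove: \emph{if $[K,L]\ne0$, then $KL$ contains a nonzero ideal of $R$.}

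\emph{The generic step.} Put $N:=[K,L]$. Since $[K,L]\subseteq[K,R]\subseteq K$ and $[K,L]\subseteq[R,L]\subseteq L$, $N$ is a nonzero Lie ideal with $N\subseteq K\cap L$, whence $N^2\subseteq(K\cap L)(K\cap L)\subseteq KL$. If $N$ is not abelian, Theorem~\ref{thm8} provides the nonzero ideal $\widetilde R[N,N^2]\widetilde R\subseteq N^2\subseteq KL$ and we are done. If $N$ is abelian then $R$ is exceptional (Lemma~\ref{lem5} if $N\subseteq Z(R)$, Lemma~\ref{lem131} otherwise). The same device applies more generally: whenever $K$ (say) is not abelian, take the nonzero ideal $J\subseteq K^2$ of Lemma~\ref{lem8}(v), which satisfies $[J,R]\subseteq K$; then $[J,L]$ is a nonzero Lie ideal contained in $K\cap L$ (so $[J,L]^2\subseteq KL$), it is noncentral because $JC=RC\ne[RC,RC]$ and Corollary~\ref{cor7} applies once $R$ is known exceptional, and if it fails to be abelian we finish via Theorem~\ref{thm8}. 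When $[J,L]$ is abelian, a computation with centralizers (Lemma~\ref{lem9}, Lemma~\ref{lem14}) shows that $\dim_C[J,L]C=\dim_C[RC,LC]=2$ forces $L$ to be abelian. Running this on both sides, one is left with the case: $R$ is exceptional, $[K,L]\ne0$, and at least one of $K,L$ is abelian (possibly both).

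\emph{The exceptional case and the main obstacle.} For the remaining case I would pass to $RC$ throughout. By Theorem~\ref{thm19} each of $KC$, $LC$ equals $Cb+C$ for some noncentral $b$, equals $[RC,RC]$, or equals $RC$; using $[KC,LC]\ne0$ together with Lemma~\ref{lem11} and Lemma~\ref{lem7} one sees that $KC=LC$ cannot be a common $2$-dimensional Lie ideal, and in each remaining configuration a direct computation in the $4$-dimensional central simple algebra $RC$ --- for instance the fact that $[RC,RC]b\not\subseteq[RC,RC]$ for noncentral $b\in[RC,RC]$ --- gives $(KC)(LC)=RC$. The genuinely delicate point, which I expect to be the main obstacle, is then the descent: to conclude one must exhibit an honest nonzero ideal of $R$ \emph{inside} the Lie ideal $KL$ knowing only $(KL)C=RC$. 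One cannot merely cite $(KL)^2\supseteq(\text{ideal})$ from Theorem~\ref{thm8}, since a non-abelian Lie ideal need not contain a nonzero ideal. The route I would follow mimics the proof of Theorem~\ref{thm20}: the ideal $J_0:=\widetilde R[KL,KL]\widetilde R$ is nonzero with $J_0C=RC$ and $[R,J_0]\subseteq KL$ (Lemma~\ref{lem8}(vi), Theorem~\ref{thm18}), and one bootstraps this, using the density of nonzero ideals of the prime PI-ring $R$ in $RC$, to locate a genuine ideal of $R$ contained in $KL$.
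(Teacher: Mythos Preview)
Your reduction to the hypothesis $[K,L]\ne 0$ and your treatment of the case where $N=[K,L]$ is not abelian are fine, and coincide with the paper's Case~1. The gap is in the remaining exceptional cases, where you attempt to pass to $RC$, verify $(KC)(LC)=RC$, and then ``descend'' to an ideal of $R$ inside $KL$.

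That descent does not work in general, and your sketch via $J_0=\widetilde R[KL,KL]\widetilde R$ and $[R,J_0]\subseteq KL$ cannot succeed as stated: the inclusion $[R,J_0]\subseteq M$ holds for \emph{every} non-abelian Lie ideal $M$ (Lemma~\ref{lem8}(vi)), yet there exist Lie ideals $M$ of an exceptional prime ring with $MC=RC$ that contain no nonzero ideal of $R$. For a concrete obstruction, take $R=\text{M}_2(F[t])$ with $\text{char}\,F=2$ and $M=[R,R]+F\,e_{11}$; one checks that $M$ is a Lie ideal, $MC=RC$, but $p\,t\,e_{11}\notin M$ for any nonzero $p\in F[t]$, so $M$ contains no $\text{M}_2(pF[t])$. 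Thus the implication ``$(KL)C=RC\Rightarrow KL$ contains a nonzero ideal of $R$'' is simply false without further input from the specific structure of $KL$, and your argument uses no such input.

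The paper avoids the descent problem entirely by constructing, in each exceptional subcase, an explicit nonzero ideal of $R$ inside $KL$. When $L$ (say) is not abelian it sets $I=\widetilde R[L,L]\widetilde R$, picks $a\in[K,I]\setminus Z(R)$ so that $[a,I]\subseteq K$, and then extracts a nonzero $\beta\in Z(R)\cap K$ from $\big[[a,I],[I,I]\big]$; this gives $\beta\big([I,I]+a[I,I]\big)\subseteq KL$, and Corollary~\ref{cor5} applied inside the prime ring $I$ produces a nonzero ideal there, which is then thickened to an ideal of $R$. When both $K$ and $L$ are abelian with $KC\ne LC$, one locates nonzero $\gamma\in Z(R)\cap L$ and $\delta=[a,b]\in Z(R)$ and checks directly that $\gamma\delta R\subseteq KL$. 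These constructions are exactly the missing ideas in your proposal; the intermediate reduction you carried out (forcing $L$ abelian when $K$ is not, via $[J,L]$) is correct but does not by itself finish the case ``$K$ not abelian, $L$ abelian''.
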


\begin{proof}
\noindent{Case 1}:\ $[K, L]$ is not abelian. Then
$$
0\ne \big[[K, L], [K. L]\big]\subseteq [K\cap L, K\cap L].
$$
In view of Theorem \ref{thm3}, $(K\cap L)^2$ contains a nonzero ideal of $R$. Since $(K\cap L)^2\subseteq KL$, $KL$
contains a nonzero ideal of $R$, as desired.\vskip6pt

\noindent{Case 2}:\ $[K, L]$ is abelian. Since $K, L$ are noncentral Lie ideals of $R$, it follows from Lemma \ref{lem5} that $R$ is exceptional.

We first consider the case that $L$ is not abelian. Let $I:=\widetilde R[L, L]\widetilde R\ne 0$.
By Lemma \ref{lem8} (vi), we have
$[I, R]\subseteq L$.

If $[K, I]\subseteq Z(R)$, then $K$ is central, a contradiction. Therefore, $[K, I]\nsubseteq Z(R)$.
Choose $a\in [K, I]\setminus Z(R)$.
Then $[a, I]\subseteq K$.
Since $[a, I]\nsubseteq Z(R)$, one can choose $b\in [a, I]\setminus Z(R)$.
Note that $I$ itself is a prime ring.
By Lemma \ref{lem8} (ii) and Lemma \ref{lem7}, there exists a nonzero
$$
\beta\in [b, [I, I]]\subseteq \big[[a, I], [I, I]\big]\subseteq Z(R)\cap K.
$$
Hence
$$
\beta\Big([I, I]+a[I, I]\Big)\subseteq \beta[I, I]+a[\beta I, I]\subseteq \beta[I, R]+a[I, R]\subseteq KL.
$$
Note that $a\in I\setminus Z(I)$. In view of Corollary \ref{cor5}, $[I, I]+a[I, I]$ contains a nonzero ideal, say $N$, of $I$.
Then $\beta INI$ is a nonzero ideal of $R$, and $\beta INI\subseteq KL$, as desired.
Similarly, we are done for the case that $K$ is not abelian.

The rest is to check the case that both $K$ and $L$ are abelian, but $KC\ne LC$.
In view of Lemma \ref{lem131}, we have
$$
KC=[a, RC]=Ca+C\ \ \text{\rm and}\ \ LC=[b, RC]=Cb+C
$$
for some $a\in K\setminus Z(R)$ and $b\in L\setminus Z(R)$. Recall that $C$ is equal to the quotient field of $Z(R)$.

We claim that there exists $0\ne \gamma\in Z(R)\cap L$. Indeed, $b\in [b, RC]$ and, by Lemma \ref{lem8} (ii), $\big[b, [R, R]\big]\ne 0$.
Therefore, there exists
$$
0\ne\gamma\in \big[b, [R, R]\big]\subseteq L\cap \big[[RC, RC], [RC, RC]\big]\subseteq C,
$$
where the last inclusion follows from Lemma \ref{lem7}. This proves our claim.
We also have
$$
[a, R]\gamma+[a, R]b\subseteq KL.
$$

Let $J:=[a, R]+[a, R]b$. Then
\begin{eqnarray}
\gamma J=\gamma [a, R]+[a,\gamma R]b\subseteq [a, R]\gamma +[a, R]b\subseteq KL.
\label{eq:5}
\end{eqnarray}
Since $KC\ne LC$, it follows from Lemma \ref{lem133} that $[K, L]\ne 0$ and so
$$
0\ne \delta:=[a, b]\in [K, L]\subseteq\big[[a, RC], [b, RC]\big]\subseteq C.
$$
Therefore, $0\ne\delta\in Z(R)\cap L$ and by Eq.\eqref{eq:5} we have
$$
 0\ne\gamma \delta R=\gamma R[a, b]\subseteq\gamma \Big([a, Rb]+[a, R]b\Big)\subseteq\gamma J\subseteq KL,
$$
 as desired.
\end{proof}

\begin{lem}\label{lem10}
Let $R$ be a prime ring with noncentral Lie ideals $K_1,\ldots,K_m$, where $m\geq 1$. Then $K_1\cdots K_m$ is also a noncentral Lie ideal of $R$.
\end{lem}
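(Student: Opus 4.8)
The plan is to induct on $m$, the case $m=1$ being exactly the hypothesis. For $m\ge 2$ I would set $K:=K_1\cdots K_{m-1}$, which is a noncentral Lie ideal by the inductive hypothesis, and $L:=K_m$; since $K$ and $L$ are Lie ideals, so is $K_1\cdots K_m=KL$, and the only thing left to prove is that $KL$ is \emph{not} contained in $Z(R)$. Here Proposition \ref{pro2} splits the argument into two cases, and I recall that $R$ is noncommutative (it has a noncentral Lie ideal) and that $Z(R)\subseteq C$.

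Suppose first that $KL$ contains a nonzero ideal $I$ of $R$. Then I would check that $I\not\subseteq Z(R)$: if $0\ne z\in I\subseteq Z(R)$, then $zR\subseteq Z(R)$ forces $z[r,s]=0$ for all $r,s\in R$, and replacing $r$ by $rx$ yields $zR[x,s]=0$, so primeness makes $R$ commutative---a contradiction. Hence $KL\supseteq I$ is noncentral and we are done in this case.

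The substantive case is when $KL$ contains no nonzero ideal of $R$. Then Proposition \ref{pro2} gives $KC=LC=Ca+C$ for some $a\in L$; since $a\notin C$ (otherwise $KC=C$, forcing $K\subseteq Z(R)$), we have $\dim_C LC=2$, so Lemma \ref{lem131} shows $L$ is abelian, $R$ is exceptional, and $LC=[a,RC]$. The key step is then the remark that $a\in LC=[a,RC]\subseteq[RC,RC]$; since $RC$ is a $4$-dimensional central simple $C$-algebra of characteristic $2$, this gives $a^2\in C$ (this is precisely the dichotomy used in the proof of Lemma \ref{lem11}: every $x\in RC$ satisfies $x^2=\alpha x+\beta$ with $\alpha,\beta\in C$, and $\alpha=0$ exactly when $x\in[RC,RC]$). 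Granted $a^2\in C$, I would assume for contradiction that $KL\subseteq Z(R)\subseteq C$; then $Ka\subseteq KL\subseteq C$, and writing a typical $k\in K\subseteq Ca+C$ as $k=\lambda a+\mu$ with $\lambda,\mu\in C$ gives $ka=\lambda a^2+\mu a\in C$, which forces $\mu=0$ because $a\notin C$. Hence $K\subseteq Ca$, so $KC\subseteq Ca$, contradicting $KC=Ca+C\ne Ca$. Therefore $KL\not\subseteq Z(R)$, which completes the induction.

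The only genuine work lies in this last case, and within it the crux is extracting $a^2\in C$; once that is in hand the contradiction is a one-line coefficient count. The point most worth double-checking is that the hypotheses of Proposition \ref{pro2} and Lemma \ref{lem131} really are available for $K=K_1\cdots K_{m-1}$---which they are precisely because the induction delivers the stronger statement that $K$ is a \emph{noncentral} Lie ideal, not merely a Lie ideal.
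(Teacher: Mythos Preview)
Your argument is correct, but it is far heavier than what the paper does. The paper's proof is four lines and uses only Lemma~\ref{lem8}(iii): assuming $K_1\cdots K_m\subseteq Z(R)$, one has $K_1\cdots K_{m-1}[K_m,R]\subseteq K_1\cdots K_m\subseteq Z(R)$, and since $K_m\nsubseteq Z(R)$ the cited lemma forces $K_1\cdots K_{m-1}=0$; iterating yields $K_1=0$, a contradiction. No induction on Lie ideals, no Proposition~\ref{pro2}, no exceptional-ring analysis is needed. Your route instead invokes Proposition~\ref{pro2} (itself a substantial result resting on Theorem~\ref{thm3}, Lemma~\ref{lem131}, Lemma~\ref{lem133}, and more) together with the trace characterization of $[RC,RC]$ from the proof of Lemma~\ref{lem11}; this all works, but it imports essentially the entire Section~4 machinery to establish what is really a two-step descent. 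The practical upshot is that Lemma~\ref{lem10} is logically prior to and independent of Proposition~\ref{pro2} in the paper's architecture, so the elementary proof is preferable both for economy and for keeping the dependency graph clean.
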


\begin{proof}
Clearly, we may assume that $m>1$. Suppose on the contrary that $K_1K_2\cdots K_m$ is a central Lie ideal of $R$. Then $K_1K_2\cdots K_m\subseteq Z(R)$ and so
$$
K_1K_2\cdots K_{m-1}[K_m, R]\subseteq K_1K_2\cdots K_{m-1}K_m\subseteq Z(R).
$$
Since $K_m\nsubseteq Z(R)$, it follows from Lemma \ref{lem8} (iii) that $K_1K_2\cdots K_{m-1}=0$. Repeating the same argument, we finally get $K_1=0$, a contradiction.
\end{proof}

We are now ready to prove the main theorem (i.e., Theorem D) in this section.

\begin{thm}\label{thm11}
Let $R$ be a prime ring with noncentral Lie ideals $K_1,\ldots,K_m$ with $m\geq 2$.
Then $K_1K_2\cdots K_m$ contains a nonzero ideal of $R$ except when, for $1\leq j\leq m$, $K_1C=K_jC=Ca+C$ for any noncentral element $a\in K_1$.
\end{thm}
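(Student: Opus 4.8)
The plan is to peel off factors one at a time, reducing everything to Proposition \ref{pro2} (the case $m=2$). The device for peeling is an auxiliary observation I would record first: \emph{if $K,L$ are noncentral Lie ideals of a prime ring $R$ and the product $KL$ is abelian, then $R$ is exceptional, both $K$ and $L$ are abelian, and $KC=LC=(KL)C$}. To prove it, note that $KL$ is a noncentral Lie ideal by Lemma \ref{lem10}; if it is abelian, then $R$ is exceptional and $\dim_C(KL)C=2$ by Lemma \ref{lem131}, so $(KL)C\neq RC$ and hence $KL$ contains no nonzero ideal of $R$. By Proposition \ref{pro2} this forces $KC=LC=Ca+C$ for some noncentral $a\in L$, whence $K$ and $L$ are abelian by Lemma \ref{lem131} and $KC=LC$. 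Finally $(KL)C\subseteq (KC)(LC)=(Ca+C)^2\subseteq Ca^2+Ca+C=Ca+C$, because $a^2\in Ca+C$ (every element of the $4$-dimensional central simple $C$-algebra $RC$ satisfies a monic quadratic over $C$); since $(KL)C$ and $Ca+C=KC$ are both $2$-dimensional, they coincide.

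Granting this, the theorem goes as follows. For $m=2$ it is Proposition \ref{pro2}, once one matches the two phrasings of the exceptional alternative: if $K_1C=K_2C=Ca+C$ for some $a$, then both $K_j$ are abelian, so by Lemma \ref{lem131} each $K_jC$ equals $Cb+C$ for \emph{any} noncentral $b\in K_j$, and the two coincide. For $m\geq 3$, put $N:=K_1\cdots K_{m-1}$, a noncentral Lie ideal by Lemma \ref{lem10}, and apply Proposition \ref{pro2} to the pair $(N,K_m)$. If $NK_m=K_1\cdots K_m$ contains a nonzero ideal of $R$, we are done; otherwise $NC=K_mC=Ca+C$ for some noncentral $a$, so $\dim_C NC=2$, $N$ is abelian, and $R$ is exceptional.

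It remains to push the conclusion down to the individual $K_j$. Since $N=K_1\cdot(K_2\cdots K_{m-1})$ is abelian and $K_2\cdots K_{m-1}$ is a noncentral Lie ideal (Lemma \ref{lem10}), the auxiliary observation gives that $K_1$ and $K_2\cdots K_{m-1}$ are abelian with $K_1C=(K_2\cdots K_{m-1})C=NC$. Iterating down the chain, every $K_j$ with $1\leq j\leq m-1$ is abelian with $K_jC=NC$; and $K_m$ is abelian as well, since $K_mC=NC$ has $C$-dimension $2$. Hence every $K_j$ is abelian, so $K_jC=Cb+C$ for any noncentral $b\in K_j$ (Lemma \ref{lem131}), and all of these equal $NC$. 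In particular $K_1C=K_jC=Cb+C$ for all $j$ and all noncentral $b\in K_1$, which is exactly the exceptional case in the statement.

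The step I expect to be the crux is this downward propagation: abelianness of the single product $K_1\cdots K_{m-1}$ must be converted into abelianness of each factor together with coincidence of all the $K_jC$'s. This is handled entirely by the auxiliary observation, whose own proof leans on Proposition \ref{pro2} (so that a product of two noncentral Lie ideals cannot be abelian ``accidentally'') and on the identity $(Ca+C)^2=Ca+C$ valid in an exceptional $RC$. A secondary, routine bookkeeping point is the distinction between ``$Ca+C$ for some noncentral $a$'' and ``$Cb+C$ for every noncentral $b$'', which is built into Lemma \ref{lem131}.
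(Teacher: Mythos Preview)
Your proof is correct and follows essentially the same route as the paper's: both peel off the last factor using Proposition~\ref{pro2} applied to $(K_1\cdots K_{m-1},K_m)$, deduce that $K_1\cdots K_{m-1}$ has $2$-dimensional $C$-span in the exceptional case, and then propagate this down to the individual $K_j$. The only organizational difference is that the paper invokes the inductive hypothesis directly on $K_1\cdots K_{m-1}$ (since a $2$-dimensional Lie ideal cannot contain a nonzero ideal), whereas you unroll this into repeated applications of your auxiliary two-factor observation; your version has the minor advantage of making the identity $(Ca+C)^2=Ca+C$ in exceptional $RC$ explicit, a point the paper's final line uses tacitly.
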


\begin{proof}
We proceed the proof by induction on $m$. If $m=2$, we are done by Proposition \ref{pro2}. Assume $m>2$.
Suppose that $K_1 K_2 \cdots K_m$ does not contain any nonzero ideal of $R$.
In view of Lemma \ref{lem10}, $K_1K_2\cdots K_{m-1}$ is a noncentral Lie ideal of $R$.
It follows from Proposition \ref{pro2} that $K_1K_2\cdots K_{m-1}C = K_mC =Cb+C$ for some $b\in K_m\setminus Z(R)$. Since $\dim_CK_1K_2\cdots K_{m-1}C=2$, it follows that
$K_1K_2\cdots K_{m-1}$ does not contain any nonzero ideal of $R$. By the inductive hypothesis, we get
$$
K_1C=\cdots=K_{m-1}C=Ca+C
$$
for any $a\in K_1\setminus Z(R)$.
%In particular, $R$ is exceptional (see Lemma \ref{lem131}).
Since $\dim_C K_m C = 2$, we have
$K_mC=K_1K_2\cdots K_{m-1}C=Ca+C$. Hence $K_1C=K_2C=\cdots= K_mC = Ca+C$ for any $a\in K_1\setminus Z(R)$.
\end{proof}

\section{Theorem E}
Let $R$ be a prime ring with extended centroid $C$, and let $K$ be a noncentral Lie ideal of $R$.
Bergen et al. proved that $\mathfrak{C}_R(K)=Z(R)$ if $\text{\rm char}\,R\ne 2$ (see \cite[Lemma 2]{bergen1981}). See also \cite[Theorem 1]{bergen1981} and \cite[Theorem 1]{lee1983}.
Ke proved that $\mathfrak{C}_R(K)=Z(R)$ except when $R$ is exceptional (see  \cite[Lemma 2]{ke1985}).
We determine $\mathfrak{C}_R(K)$ for arbitrary prime ring $R$ as follows.

\begin{thm}\label{thm15}
Let $R$ be a prime ring with a noncentral Lie ideal $K$. Then the following hold:

(i)\ $\mathfrak{C}_{RC}(K)=KC$ iff $\dim_CKC=2$;

(ii)\ $\mathfrak{C}_{R}(K)=Z(R)$ iff $\dim_CKC>2$.
\end{thm}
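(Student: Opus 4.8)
The plan is to reduce the statement to three facts already available: Lemma~\ref{lem131} (a noncentral Lie ideal $K$ is abelian if and only if $\dim_C KC=2$, in which case $R$ is exceptional and $KC=Ca+C$ for every noncentral $a\in K$), Lemma~\ref{lem9} ($\mathfrak{C}_{RC}(a)=Ca+C$ for a noncentral element $a$ of an exceptional prime ring), and Theorem~\ref{thm3} (a non-abelian Lie ideal $L$ of a semiprime ring satisfies $0\ne\widetilde R[L,L^2]\widetilde R\subseteq L^2$). Two elementary remarks will be used throughout: since $C=Z(RC)$, commuting with a fixed element is a $C$-linear condition, so $\mathfrak{C}_{RC}(K)=\mathfrak{C}_{RC}(KC)$; and an element of $R$ lying in $C$ centralizes $R$, so $R\cap C=Z(R)$.

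For (i) I would first treat the implication $\dim_C KC=2\Rightarrow\mathfrak{C}_{RC}(K)=KC$: by Lemma~\ref{lem131}, $R$ is exceptional and $KC=Ca+C$ for some noncentral $a\in K$, and then $\mathfrak{C}_{RC}(K)=\mathfrak{C}_{RC}(KC)=\mathfrak{C}_{RC}(Ca+C)=\mathfrak{C}_{RC}(a)$, which equals $Ca+C=KC$ by Lemma~\ref{lem9}. For the converse, if $\mathfrak{C}_{RC}(K)=KC$ then in particular $KC\subseteq\mathfrak{C}_{RC}(KC)$, i.e.\ $[KC,KC]=0$, hence $[K,K]=0$, and Lemma~\ref{lem131} gives $\dim_C KC=2$.

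For (ii), suppose $\dim_C KC>2$; then $K$ is not abelian by Lemma~\ref{lem131}, so Theorem~\ref{thm3} yields a nonzero ideal $I$ of $R$ with $I\subseteq K^2$. Given $x\in\mathfrak{C}_R(K)$, from $[x,K]=0$ and the Leibniz rule $[x,k_1k_2]=k_1[x,k_2]+[x,k_1]k_2$ I get $[x,K^2]=0$, hence $[x,I]=0$; then for $a\in I$, $r\in R$ we have $0=[x,ar]=a[x,r]$, so $I[x,r]=0$ and primeness forces $[x,r]=0$ for all $r$, i.e.\ $x\in Z(R)$. Thus $\mathfrak{C}_R(K)=Z(R)$. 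Conversely, if $\dim_C KC\le2$ then $K$ is abelian (trivially when $\dim_C KC\le1$, and by Lemma~\ref{lem131} when $\dim_C KC=2$), so $K\subseteq\mathfrak{C}_R(K)$; since $K$ is noncentral this gives $\mathfrak{C}_R(K)\ne Z(R)$, and contrapositively $\mathfrak{C}_R(K)=Z(R)$ implies $\dim_C KC>2$.

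There is no serious obstacle here. The only points I would watch are extracting ``$KC$ is abelian'' from the equality $\mathfrak{C}_{RC}(K)=KC$, and noting in the converse of (ii) that $\dim_C KC\le2$ already forces $K$ to be abelian; once those are in place the argument is just $C$-linearity of centralizers together with the Leibniz rule and the primeness of $R$.
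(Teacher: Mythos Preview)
Your proof is correct and follows essentially the same route as the paper: Lemma~\ref{lem131} and Lemma~\ref{lem9} for part (i), and for part (ii) the fact that $K^2$ contains a nonzero ideal when $\dim_C KC>2$ (the paper cites Corollary~\ref{cor4}, you cite the equivalent Theorem~\ref{thm3}), together with $\mathfrak{C}_R(K)\subseteq\mathfrak{C}_R(K^2)$ via Leibniz. The only cosmetic difference is that in the converse of (ii) the paper invokes part (i) to reach the contradiction, whereas you go directly through Lemma~\ref{lem131}; note also that your case $\dim_C KC\le 1$ is vacuous since $K$ noncentral forces $\dim_C KC\ge 2$ by Lemma~\ref{lem14}.
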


\begin{proof}
(i)\ Suppose that $\dim_CKC=2$. In view of Lemma \ref{lem131}, $R$ is exceptional and $KC=Ca+C$, where $a\in K\setminus Z(R)$.
It follows from Lemma \ref{lem9} that
$$
\mathfrak{C}_{RC}(K)=\mathfrak{C}_{RC}(a)=Ca+C=KC.
$$
Conversely, assume that $\mathfrak{C}_{RC}(K)=KC$.  In particular, $K$ is abelian. It follows from Lemma \ref{lem131} that $\dim_CKC=2$, as desired.

(ii)\ Assume that $\dim_CKC>2$. In view of Corollary \ref{cor4}, $K^2$ contains a nonzero ideal of $R$. Therefore, $\mathfrak{C}_{R}(K^2)=Z(R)$.
Since $Z(R)\subseteq \mathfrak{C}_{R}(K)\subseteq \mathfrak{C}_{R}(K^2)$, we get $\mathfrak{C}_{R}(K)=Z(R)$.

Conversely, assume that $\mathfrak{C}_{R}(K)=Z(R)$. We claim that $\dim_CKC>2$. Otherwise, $\dim_CKC=2$ since $K$ is noncentral.
By (i) we have $\mathfrak{C}_{RC}(K)=KC$ and so $K\subseteq \mathfrak{C}_{R}(K)=Z(R)$, a contradiction.

Hence the proof is complete.
\end{proof}

\begin{thm}\label{thm14}
Let $R$ be a prime ring with noncentral Lie ideals $K_1,\ldots,K_m$ with $m\geq 1$.
Then either $\mathfrak{C}_R(K_1\cdots K_m)=Z(R)$ or $\mathfrak{C}_{RC}(K_1\cdots K_m)=K_1C$ where $K_1C=K_jC$ for $j=1,\ldots,m$ and $\dim_CK_1C=2$.
\end{thm}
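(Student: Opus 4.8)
The plan is to deduce the result by assembling Theorem~\ref{thm15} (the centralizer dichotomy for a single noncentral Lie ideal), Lemma~\ref{lem10}, and Theorem~\ref{thm11} (Theorem~D). First I would dispose of the trivial case $m=1$: here $K_1\cdots K_m=K_1$ and the assertion is precisely Theorem~\ref{thm15}, the clause ``$K_1C=K_jC$ for $j=1,\ldots,m$'' being vacuous. So assume $m\ge 2$ and put $K:=K_1K_2\cdots K_m$. By Lemma~\ref{lem10}, $K$ is again a noncentral Lie ideal of $R$, so Theorem~\ref{thm15} applies to $K$.

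Next I would run the dichotomy. If $\mathfrak{C}_R(K)=Z(R)$, we are already in the first alternative and there is nothing more to prove. Otherwise, Theorem~\ref{thm15}(ii) forces $\dim_C KC\le 2$, and since $K$ is noncentral this gives $\dim_C KC=2$; then Theorem~\ref{thm15}(i) yields $\mathfrak{C}_{RC}(K)=KC$. In this situation Lemma~\ref{lem131} shows that $R$ is exceptional, so $\dim_C RC=4>2=\dim_C KC$; in particular $K=K_1\cdots K_m$ does not contain any nonzero ideal of $R$. Hence Theorem~\ref{thm11} applies and gives $K_1C=K_jC=Ca+C$ for $1\le j\le m$ and any noncentral $a\in K_1$, so in particular $\dim_C K_1C=2$.

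It then remains to identify $KC$ with $K_1C$. Since each $K_i\subseteq K_iC=Ca+C$, we get $K=K_1K_2\cdots K_m\subseteq (Ca+C)^m$; and because $Ca+C=\mathfrak{C}_{RC}(a)$ is a subring of $RC$ by Lemma~\ref{lem9} (equivalently, $a^2\in Ca+C$ in the $4$-dimensional central simple algebra $RC$), we have $(Ca+C)^m\subseteq Ca+C=K_1C$. Therefore $KC\subseteq K_1C$; as both are $2$-dimensional over $C$, equality holds, and so $\mathfrak{C}_{RC}(K)=KC=K_1C$ with $K_1C=K_jC$ for all $j$ and $\dim_C K_1C=2$, which is the second alternative.

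I do not expect a serious obstacle: the statement is essentially a bookkeeping exercise combining Theorem~\ref{thm15}, Lemma~\ref{lem10}, and Theorem~\ref{thm11}, and the only genuine computation is the closure $(Ca+C)^m\subseteq Ca+C$, which is immediate once one recalls that $Ca+C$ is the centralizer of $a$ in $RC$. The one place to be slightly careful is making sure the case split on $\mathfrak{C}_R(K)$ versus $\dim_C KC$ is handled through Theorem~\ref{thm15} rather than reproved, and that the $m=1$ case is acknowledged separately.
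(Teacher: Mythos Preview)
Your proof is correct and follows essentially the same approach as the paper: both reduce the $m\ge 2$ case to Theorem~\ref{thm11} (Theorem~D) and then identify $K_1\cdots K_mC$ with $K_1C=Ca+C$, using Lemma~\ref{lem9} to compute the centralizer. The only cosmetic difference is that you branch by applying Theorem~\ref{thm15} to the product $K=K_1\cdots K_m$ (via Lemma~\ref{lem10}), whereas the paper branches directly on whether $K_1\cdots K_m$ contains a nonzero ideal; the two dichotomies are equivalent and the substance of the argument is the same.
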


\begin{proof}
We note that the case $m=1$ has been proved by Theorem \ref{thm15}.
Assume that $m\geq 2$.
If $K_1K_2\cdots K_m$ contains a nonzero ideal of $R$, then the primeness of $R$ implies that $\mathfrak{C}_R(K_1K_2\cdots K_m)=Z(R)$.
Otherwise, $K_1K_2\cdots K_m$ does not contain any nonzero ideal of $R$. Choose a noncentral element $a\in K_1$.  In view of Theorem \ref{thm11},
$K_1C=K_jC=Ca+C$ for $1\leq j\leq m$. In this case, $R$ is exceptional (see Lemma \ref{lem131}) and $K_1K_2\cdots K_mC=K_1C=Ca+C$.
Therefore, by Lemma \ref{lem9} we have
$$
\mathfrak{C}_{RC}(K_1K_2\cdots K_m)=\mathfrak{C}_{RC}(K_1K_2\cdots K_mC)=\mathfrak{C}_{RC}(a)=Ca+C=K_1C,
$$
as desired.
\end{proof}

Finally, the following theorem (i.e., Theorem E) is an application of both Theorem \ref{thm11} and Theorem \ref{thm14}. This also extends Theorem \ref{thm2} to its full generality.

\begin{thm}\label{thm13}
Let $R$ be a prime ring with noncentral Lie ideals $K_1,\ldots,K_m, L_1, \ldots,L_n$, where $m, n\geq 1$.
Then $\big[K_1\cdots K_m, L_1\cdots L_n\big]=0$ iff, for all $j, k$, we have $K_1C=K_jC=L_kC=Ca+C$ for any noncentral element $a\in K_1$.
\end{thm}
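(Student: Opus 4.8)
The plan is to prove the two implications separately; the ``if'' direction is short, while the ``only if'' direction is where Theorem \ref{thm14} (and hence Theorem \ref{thm11}) does the real work. For the ``if'' direction, suppose $K_1C=K_jC=L_kC=Ca+C$ for all $j,k$ and every noncentral $a\in K_1$, and fix one such $a$. Since $K_1$ is a noncentral Lie ideal with $\dim_CK_1C=\dim_C(Ca+C)=2$, Lemma \ref{lem131} shows $R$ is exceptional, so $RC$ is a $4$-dimensional central simple algebra and, as in the proof of Lemma \ref{lem11}, $a^2=\alpha a+\beta$ for some $\alpha,\beta\in C$; thus $Ca+C$ is a commutative $C$-subalgebra of $RC$. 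Because $1\in C$, each $K_i$ and each $L_k$ lies in $Ca+C$, whence $K_1\cdots K_m\subseteq Ca+C$ and $L_1\cdots L_n\subseteq Ca+C$, and commutativity of $Ca+C$ gives $\big[K_1\cdots K_m,\,L_1\cdots L_n\big]=0$.

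For the ``only if'' direction, I would set $K:=K_1\cdots K_m$ and $L:=L_1\cdots L_n$; by Lemma \ref{lem10} these are noncentral Lie ideals, and $\big[K,L\big]=0$ means $L\subseteq\mathfrak{C}_R(K)$ and $K\subseteq\mathfrak{C}_R(L)$. Since $L$ is noncentral, $\mathfrak{C}_R(K)\ne Z(R)$, so the dichotomy in Theorem \ref{thm14} puts us in its second alternative: $\mathfrak{C}_{RC}(K)=K_1C$, $K_1C=K_jC$ for all $j$, and $\dim_CK_1C=2$; in particular $R$ is exceptional (Lemma \ref{lem131}). Exchanging the roles of $K$ and $L$ gives, likewise, $\mathfrak{C}_{RC}(L)=L_1C$, $L_1C=L_kC$ for all $k$, and $\dim_CL_1C=2$. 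The next step is to check that $KC=K_1C$ and, symmetrically, $LC=L_1C$: writing $K_1C=Ca+C$ with $a\in K_1$ noncentral (Lemma \ref{lem131}), exceptionality gives $a^2\in Ca+C$, so $Ca+C$ is a commutative subalgebra; since each $K_iC$ equals $K_1C$, the product $KC=K_1\cdots K_mC$ is contained in $Ca+C$, while $\dim_CKC\ge2$ because $[b,RC]\subseteq KC$ for a noncentral $b\in K$ (Lemma \ref{lem14}), and therefore $KC=Ca+C=K_1C$.

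To finish, from $L\subseteq\mathfrak{C}_R(K)\subseteq\mathfrak{C}_{RC}(K)=K_1C$ I would deduce $LC\subseteq K_1C$, hence $LC=K_1C$ by comparing the (equal, two-dimensional) dimensions; then $L_1C=LC=K_1C$, and combining with $K_1C=K_jC$ and $L_1C=L_kC$ yields $K_1C=K_jC=L_kC$ for all $j,k$, while Lemma \ref{lem131} gives $K_1C=Ca+C$ for any noncentral $a\in K_1$, as required. I expect the only delicate point to be the reduction $KC=K_1C$ (and $LC=L_1C$): this is exactly where exceptionality is used to turn a two-dimensional localization of a Lie ideal into a commutative subalgebra, and without it the argument would not close; the remaining steps are routine bookkeeping with Theorem \ref{thm14}, Lemma \ref{lem10} and Lemma \ref{lem131}.
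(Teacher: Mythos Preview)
Your proposal is correct and follows essentially the same approach as the paper: both directions hinge on the centralizer dichotomy of Theorem~\ref{thm14} together with the fact that in the exceptional case $Ca+C$ is closed under multiplication, so the products $K_1\cdots K_m$ and $L_1\cdots L_n$ remain inside it. The only organizational difference is that the paper treats $m=n=1$ separately via Theorem~\ref{thm2} and then applies Theorem~\ref{thm14} on one side and Theorem~\ref{thm11} directly on the other, whereas you apply Theorem~\ref{thm14} symmetrically to both $K$ and $L$; the underlying ideas are identical.
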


\begin{proof}
``$\Longrightarrow$":\ The case $m=n=1$ has been proved by Theorem \ref{thm2}. Suppose that $m>1$ or $n>1$. Without loss of generality, we assume that $n>1$.
By assumption, we have $L_1L_2\cdots L_n\subseteq \mathfrak{C}_{RC}(K_1\cdots K_m)$.
In view of Theorem \ref{thm14}, one of the following two cases holds:

\noindent{Case 1}:\ $\mathfrak{C}_R(K_1\cdots K_m)=Z(R)$. Therefore $L_1L_2\cdots L_n\subseteq Z(R)$, a contradiction (see Lemma \ref{lem10}).

\noindent{Case 2}:\ $\mathfrak{C}_{RC}(K_1\cdots K_m)=K_1C$, where $K_1C=K_jC$ for $j=1,\ldots,m$ and $\dim_CK_1C=2$.
In view of Lemma \ref{lem131}, $K_1C=Ca+C$ for any $a\in K_1\setminus Z(R)$.

In this case, we have
$$
L_1L_2\cdots L_nC\subseteq \mathfrak{C}_{RC}(K_1\cdots K_m)\subseteq K_1C.
$$
Since $\dim_CK_1C=2$ and $L_1\cdots L_nC$ is a noncentral Lie ideal of $RC$ (see Lemma \ref{lem10}), we get
$$
L_1L_2\cdots L_nC=K_1C.
$$
In particular, $L_1L_2\cdots L_n$ does not contain any nonzero ideal of $R$.
In view of Theorem \ref{thm11}, for any $b\in L_1\setminus Z(R)$, we have
$L_1C=L_kC=Cb+C$ for all $1\leq k\leq n$. By Lemma \ref{lem131}, $R$ is exceptional. Hence
$$
Ca+C=K_1C=L_1L_2\cdots L_nC=Cb+C=L_kC
$$
for all $k$, as desired.

``$\Longleftarrow$":\ Choose $a\in K_1\setminus Z(R)$. By assumption, we have $K_1C=K_jC=L_kC=Ca+C$ for all $j, k$.
Thus
$K_1\cdots K_m\subseteq \sum_{i=0}^mCa^i$ and $L_1\cdots L_n\subseteq \sum_{i=0}^nCa^i$.
Hence $\big[K_1\cdots K_m, L_1\cdots L_n\big]=0$, as desired.
\end{proof}

\end{document}